\theoremstyle{plain}
\newtheorem{corollary}{Corollary}
\newtheorem{definition}{Definition}
\newtheorem{lemma}{Lemma}
\newtheorem{proposition}{Proposition}
\newtheorem{remark}{Remark}
\newtheorem{theorem}{Theorem}
\numberwithin{equation}{section}
\begin{document}
\title[Beurling density theorems on a vertical strip]{Beurling density
theorems for sampling and interpolation on the flat cylinder}
\author{Lu\'{\i}s Daniel Abreu}
\address{Faculty of Mathematics \\
University of Vienna \\
Oskar-Morgenstern-Platz 1 \\
1090 Vienna, Austria}
\email{abreuluisdaniel@gmail.com}
\author{Franz Luef }
\address{Department of Mathematics, NTNU Trondheim, 7041 Trondheim, Norway}
\email{franz.luef@ntnu.no}
\author{Mohammed Ziyat}
\address{LAMA, Mohammed V University in Rabat \\
Faculty of Sciences, \\
Rabat, Morocco.}
\email{mohammed.ziyat@fsr.um5.ac.ma}
\subjclass[2010]{42C40, 46E15, 42C30, 46E22, 42C15}
\keywords{Fock space, quasi-periodic functions, Sampling and Interpolation,
Beurling density, Gabor frames}
\thanks{The authors would like to thank Antti Haimi for his input during the
early stages of this work. This research was supported by the Austrian
Science Fund (FWF), P-31225-N32 and 10.55776/PAT8205923.}

\begin{abstract}
We provide a \emph{complete description} of \emph{sampling} and \emph{%
interpolation} sets, in terms of upper and lower Beurling densities, for the
Fock space $\mathcal{F}_{2,\nu }^{\alpha }\left( \mathbb{C}/\mathbb{Z}%
\right) $ of entire functions, quasi-periodic with the Weyl translation of
the horizontal variable. The topological space $\mathbb{C}/\mathbb{Z}$, is
represented by the `flat cylinder', the vertical righ-open strip $%
[0,1)\times \mathbb{R}$. As a by-product, for the space $L_{\nu }^{2}(0,1)$
of $\mathbb{R}$-measurable and $[0,1]$-square-integrable, quasi-periodic
functions with factor $e^{2\pi ik\nu }$, we obtain a full description of
`theta Gabor frames' and `theta Riesz basic sequences' of the form%
\begin{equation*}
\{e^{2i\pi z_{2}t}\theta _{z_{2},z_{1}}(-t,i),z_{1}+iz_{2}\in Z\}\text{,}
\end{equation*}%
\ where $Z$ is a discrete separated set and $\theta _{\alpha ,\beta }$ the
Jacobi theta function.
\end{abstract}

\maketitle

\section{Introduction}

In this paper we consider the problem of sampling and interpolation in a
natural cylindrical setting. A Hilbert Fock space $\mathcal{F}_{2,\nu
}^{\alpha }\left( \mathbb{C}/\mathbb{Z}\right) $ is considered on the
topological space $\mathbb{C}/\Gamma $, geometrically equivalent to the
`flat cylinder' $\Lambda (\mathbb{Z}):\mathbb{=}[0,1)\times \mathbb{R}$,
defined by $e^{-\alpha |z|^{2}}$-weighted entire functions on $\mathbb{C}$,
square integrable on $\Lambda (\mathbb{Z})$, satisfying the Weyl translation
horizontal quasi periodicity condition,%
\begin{equation}
F(z+k)=e^{2\pi ik\nu }e^{\frac{\alpha }{2}k^{2}+\alpha zk}F(z)\text{, \ \ }%
k\in \mathbb{Z}\text{.}  \label{funct-equa}
\end{equation}%
We obtain a \emph{complete description} of \emph{sampling and interpolation}
in terms of a concept of \emph{Beurling density} adapted to the geometry of $%
\Lambda (\mathbb{Z})$. This reveals the real number $\frac{\alpha }{\pi }$
as a sharp critical `Nyquist density': for a separated sequence $Z\subset
\Lambda (\mathbb{Z})$, the condition $D^{-}(Z)>\frac{\alpha }{\pi }$
characterizes \emph{sets of sampling}, while the condition $D^{+}(Z)<\frac{%
\alpha }{\pi }$ characterizes \emph{sets of interpolation}. The results are
equivalent to density conditions for the existence of \emph{Gabor frames}
and \emph{Gabor Riesz basis} of the form%
\begin{equation}
\mathcal{G}_{0}\left( g_{0},Z\right) =\{e^{2i\pi z_{2}t}\theta
_{z_{2},z_{1}}(-t,i)\text{, }z=z_{1}+iz_{2}\in Z\text{, }t\in \mathbb{R}\}%
\text{,}  \label{GaborFrameCylinder}
\end{equation}%
where $g_{0}(t)=2^{\frac{1}{4}}e^{-\pi t^{2}}$ and $\theta _{\alpha ,\beta
}(z,\tau ):=\sum_{k\in \mathbb{Z}}e^{i\pi (k+\alpha )^{2}\tau +2\pi
i(k+\alpha )(z+\beta )},\quad $Im$(\tau )>0$, is the classical Jacobi theta
function, for the space $L_{\nu }^{2}(0,1)$ of $\mathbb{R}$-measurable, $%
[0,1]$-square integrable quasi-periodic functions in the sense, 
\begin{equation}
f(t+k)=e^{2\pi ik\nu }f(t),\quad t\in \mathbb{R}\text{,}\,k\in \mathbb{Z}%
\text{.}  \label{functional}
\end{equation}

\subsection{The Fock space of entire functions}

Let $\mathcal{F}_{2}^{\alpha }(\mathbb{C})$ be the Fock space of entire and
square-integrable functions on $\mathbb{C}$ with respect to the Gaussian
measure. The classical Bargmann transform 
\begin{equation}
\mathcal{B}^{\alpha }f(z)=2^{1/4}\int_{\mathbb{R}}f(t)e^{2\alpha tz-\alpha
t^{2}-\frac{\alpha }{2}z^{2}}dt\text{,}  \label{BarDef0}
\end{equation}%
is well known to be an unitary isomorphism $\mathcal{B}^{\alpha }:{L}^{2}(%
\mathbb{R})\rightarrow \mathcal{F}_{2}^{\alpha }(\mathbb{C})$ \cite[Theorem
3.4.3]{Charly} and to be related to the{\ \emph{short-time Fourier transform
(STFT)}}, defined \cite{Charly} in terms of time-frequency shifts acting on
a window $g\in {L}^{2}(\mathbb{R})$ 
\begin{equation*}
\pi (x,\xi )g(t):=e^{2\pi i\xi t}f(t-x),\qquad (x,\xi )\in {\mathbb{R}^{2}}%
,t\in {\mathbb{R}}\text{,}
\end{equation*}%
as {\ 
\begin{equation}
V_{g}f(x,\xi )=\left\langle f,\pi (x,\xi )g\right\rangle _{L^{2}\left( 
\mathbb{R}\right) }=\int_{%
\mathbb{R}
}f(t)\overline{g(t-x)}e^{-2\pi i\xi t}dt\in L^{2}(\mathbb{R}^{2})\text{.}
\label{Gabor}
\end{equation}%
}When $g$ is specialized to be a Gaussian, $g(t)=g_{0}(t)=2^{\frac{1}{4}%
}e^{-\pi t^{2}}$, (\ref{Gabor}) and (\ref{BarDef0}) are related by the
formula 
\begin{equation}
\mathcal{B}f(z)=\mathcal{B}^{\pi }f(z)=e^{-i\pi x\xi +\frac{\pi }{2}%
\left\vert z\right\vert ^{2}}V_{g_{0}}f(x,-\xi )\text{,}\ z=x+i\xi \text{.}
\label{BargDef}
\end{equation}%
Lyubarskii \cite{Ly} and Seip-Wallst\'{e}n \cite{seip1,seip2}, independently
described the sampling and interpolating sequences in $\mathcal{F}%
_{2}^{\alpha }(\mathbb{C})$. Using the relation (\ref{BargDef}), it is easy
to see that $Z\subset \mathbb{C}$ is a sequence of sampling in $\mathcal{F}%
_{2}^{\alpha }(\mathbb{C})$ exactly when the sequence $Z\subset \mathbb{R}%
^{2}$ generates a Gabor frame $\mathcal{G}\left( g_{0},Z\right) =\{e^{2\pi
iz_{2}t}g_{0}(t-z_{1})$, $z=(x,\xi )\in Z\}$ in ${L}^{2}(\mathbb{R})$ with a
Gaussian window $g_{0}(t)=2^{\frac{1}{4}}e^{-\pi t^{2}}$. This settled a
conjecture of Daubechies and Grossmann \cite{DauGross}, concerning the
density of sets generating Gabor frames for the Gaussian window. See, for
instance \cite{AoP} for more details on this connection in the lattice case.

\subsection{The Fock space of quasi-periodic entire functions}

Let $\nu ,\,\alpha \in \mathbb{R}$ be fixed parameters. The core object of
this paper is $\mathcal{F}_{2,\nu }^{\alpha }\left( \mathbb{C}/\mathbb{Z}%
\right) $, the Fock-type space of entire functions satisfying the
quasi-periodicity condition (\ref{funct-equa}) and equipped with the norm 
\begin{equation*}
\Vert F\Vert _{\mathcal{F}_{2,\nu }^{\alpha }\left( \mathbb{C}/\mathbb{Z}%
\right) }^{2}:=\int_{\mathbb{C}/\mathbb{Z}}|F(z)|^{2}e^{-\alpha |z|^{2}}dA(z)%
\text{.}
\end{equation*}%
This space seems to have been first considered in \cite{GhanmiIntissar2008}.
To integrate over the topological space $\mathbb{C}/\mathbb{Z}$, we consider$%
\ \mathbb{Z}$ as a discrete subgroup of $\mathbb{C}$ and associate to it the
vertical strip 
\begin{equation}
\Lambda (\mathbb{Z}):\mathbb{=}[0,1)\times \mathbb{R}\text{,}  \label{strip}
\end{equation}%
as fundamental domain. In higher dimensions, the analogue object $\mathbb{C}%
^{d}/\Gamma $, where $\Gamma $ is a discrete subgroup of $(\mathbb{C}^{d},+)$
is called a \emph{quasi-torus} \cite{Z1,IZ2,Florentino,Capo}. A sequence $%
Z\subset \lbrack 0,1)\times \mathbb{R}$ is said to be\emph{\ a set of
sampling} for $\mathcal{F}_{2,\nu }^{\alpha }\left( \mathbb{C}/\mathbb{Z}%
\right) $ if there exist constants $A$ and $B$, such that, for every $F\in 
\mathcal{F}_{2,\nu }^{\alpha }\left( \mathbb{C}/\mathbb{Z}\right) $, 
\begin{equation}
A\Vert F\Vert _{\mathcal{F}_{2,\nu }^{\alpha }\left( \mathbb{C}/\mathbb{Z}%
\right) }^{2}\leq \sum_{z\in Z}|F(z)|^{2}e^{-\alpha |z|^{2}}\leq B\Vert
F\Vert _{\mathcal{F}_{2,\nu }^{\alpha }\left( \mathbb{C}/\mathbb{Z}\right)
}^{2}\text{.}  \label{samplingFockdef}
\end{equation}%
In section 2.3, we rephrase definition (\ref{samplingFockdef}) and the
corresponding for $\mathcal{F}_{\infty ,\nu }^{\alpha }\left( \mathbb{C}/%
\mathbb{Z}\right) $ in a form that is more suitable for the proofs, together
with the definition of a sequence of interpolation for $\mathcal{F}_{2,\nu
}^{\alpha }\left( \mathbb{C}/\mathbb{Z}\right) $. In the next section we
will see that (\ref{samplingFockdef}) is equivalent to $Z$\ generating a
frame for the theta-Gabor system (\ref{GaborFrameCylinder}).

\subsection{Quasi-periodic square-integrable functions}

Consider the $L^{2}$-space 
\begin{equation*}
L_{\nu }^{2}(0,1)=\left\{ f:\mathbb{R}\rightarrow \mathbb{C}\ \text{%
measurable with }\left\Vert f\right\Vert _{L^{2}[0,1]}<\infty \text{,
satisfying (\ref{functional})}\right\} \text{.}
\end{equation*}%
The classical Bargmann transform (\ref{BargDef}) is\ an unitary isomorphism $%
\mathcal{B}^{\alpha }:L_{\nu }^{2}(0,1)\rightarrow \mathcal{F}_{2,\nu
}^{\alpha }\left( \mathbb{C}/\mathbb{Z}\right) $ \cite{GhanmiIntissar2013}.
The family of functions $\{e_{k,\nu }(t)=e^{2\pi it(\nu +k)}\}_{k\in \mathbb{%
Z}}$ is an orthonormal basis of $L_{\nu }^{2}(0,1)$. It is possible to
construct functions in $L_{\nu }^{2}(0,1)$ out of functions in Feichtinger's
algebra $\mathcal{S}_{0}(\mathbb{R})$, defined as \cite%
{Fei,FeiZim,FeiModulation}, 
\begin{equation*}
\mathcal{S}_{0}(\mathbb{R}):=\left\{ g\in {L}^{2}(\mathbb{R}):V_{h_{0}}g\in
L^{1}(\mathbb{R}^{2})\right\} \text{,}
\end{equation*}%
by means of the periodization operator 
\begin{equation*}
\Sigma _{\nu }f(t):=\sum_{k\in \mathbb{Z}}e^{2\pi ik\nu }f(t-k)\text{,}
\end{equation*}%
which is well defined as an operator $\Sigma _{\nu }:\mathcal{S}_{0}(\mathbb{%
R})\rightarrow {L}_{\nu }^{2}(0,1)$ with dense range \cite[Proposition 1]%
{ALZ0}. Now, assume $g\in \mathcal{S}_{0}(\mathbb{R})$. Given $f\in {L}_{\nu
}^{2}(0,1)$, a short calculation gives (see \cite[Lemma 2]{ALZ0} for the
details): 
\begin{equation}
\left\langle f,\Sigma _{\nu }\left( \pi (z)g\right) \right\rangle _{{L}_{\nu
}^{2}(0,1)}=\left\langle f,\pi (z)g\right\rangle _{L^{2}\left( \mathbb{R}%
\right) }\text{.}  \label{ident}
\end{equation}%
Since the Bargmann transform\ is an unitary operator $\mathcal{B}:L_{\nu
}^{2}(0,1)\rightarrow \mathcal{F}_{2,\nu }^{\pi }\left( \mathbb{C}/\mathbb{Z}%
\right) $ ,\ then (\ref{ident}) allows to\ rewrite the sampling condition (%
\ref{samplingFockdef}) for the Gaussian window. This is equivalent to saying
that\ there exist constants $A,B>0$ such that, for all $f\in {L}_{\nu
}^{2}(0,1)$,%
\begin{equation}
A\left\Vert f\right\Vert _{{L}_{\nu }^{2}(0,1)}^{2}\leq \sum_{z\in
Z}\left\vert \left\langle f,\Sigma _{\nu }\left( \pi (z)g\right)
\right\rangle _{{L}_{\nu }^{2}(0,1)}\right\vert ^{2}\leq B\left\Vert
f\right\Vert _{{L}_{\nu }^{2}(0,1)}^{2}\text{.}  \label{frame1}
\end{equation}%
Thus, $Z$ being a sampling sequence for $\mathcal{F}_{2,\nu }^{\pi }\left( 
\mathbb{C}/\mathbb{Z}\right) $ is equivalent to the frame property for the
Gabor system in ${L}_{\nu }^{2}(0,1)$, $\mathcal{G}_{\nu }\left(
g_{0},Z\right) $, where 
\begin{equation}
\mathcal{G}_{\nu }\left( g_{0},Z\right) :=\{\Sigma _{\nu }\left( \pi
(z_{1},z_{2})g_{0}\right) ,\;z\in Z\}\text{,}  \label{Frame_v}
\end{equation}%
since $\left\Vert g_{0}\right\Vert _{L^{2}(\mathbb{R})}=1$. Chosing $\nu =0$%
, the Poisson summation formula gives%
\begin{equation*}
\Sigma (\pi (x,\xi )g_{0})(t)=e^{2i\pi \xi t}\sum_{n\in \mathbb{Z}}e^{-\pi
(n+\xi )^{2}+2i\pi (n+\xi -\nu )(-t+x)}=e^{2i\pi \xi t}\theta _{\xi ,x}(-t,i)%
\text{,}
\end{equation*}%
where $\theta _{\alpha ,\beta }$ is the Jacobi theta function (see (\ref%
{classical-theta})). This shows that $\mathcal{G}_{0}\left( g_{0},Z\right) $
is indeed of the form (\ref{GaborFrameCylinder}).

\subsection{Overview, background and outline}

Characterizing frames and sets of sampling is a problem whose classical
origins can be traced back to the work of Duffin and Schaeffer on frames 
\cite{DS} and of Beurling on sampling and interpolation of band-limited
functions \cite{Beurling}. As in the cases of the analytic function spaces
on $\mathbb{C}$ \cite{Ly,seip1,seip2} and on the unit disc \cite{SeipDisc},
we obtain a full description of such sequences. We note that, while the unit
disc is conformally equivalent to the vertical strip $\Lambda (\mathbb{Z})$,
the spaces considered in this paper are not conformally equivalent to the
Bergman spaces on $\mathbb{D}$, to which Beurling scheme has also been
sucessfully adapted by Seip \cite{SeipDisc}. One should also point out that
the results in the disc \cite{SeipDisc} characterize wavelet frames and
Riesz basis with the so-called Cauchy wavelet (the Fourier transform of the
first Laguerre function).

When $Z$ is the regular lattice $Z_{\beta }:=(0,\beta n)\equiv i\beta 
\mathbb{Z\subset }\Lambda (\mathbb{Z}):=[0,1)\times \mathbb{R}$, Gabor
systems in ${L}_{\nu }^{2}(0,1)$ enjoy properties reminiscent of classical
Gabor systems in $L^{2}(\mathbb{R})$. This has been explored in the
companion paper \cite[Section 4]{ALZ0}. The methods of \cite[Section 4]{ALZ0}%
, while providing as a sufficient condition $\beta <1$ for general windows
in $\mathcal{S}_{0}(\mathbb{R})$, are constrained to this regular case. The
choice $g_{0}(t)=2^{\frac{1}{4}}e^{-\pi t^{2}}$ allows to go well beyond the
regular case, since it allows to rephrase the problem in $\mathcal{F}_{2,\nu
}^{\alpha }\left( \mathbb{C}/\mathbb{Z}\right) $, where one can make use of
the powerfull machinery of entire functions. Our \textquotedblleft Nyquist
density" reduces to $\beta <1$ in the regular case with $\alpha =\pi $.\ 

Flat cylindric STFT phase spaces have been recently used in an approach to
the problem of uniqueness of STFT phase retrieval \cite{LukPhi} for signals
defined in intervals of the form $[-\alpha /2,\alpha /2]$, leading to the
flat cylinder phase space $[-\alpha /2,\alpha /2)\times \mathbb{R}$. The
same problem was considered, for functions bandlimited to $\left[ -2B,2B%
\right] $, in \cite{AlaiWell}, leading to the horizontal flat cylinder
domain $\mathbb{R}\times \lbrack -2B,2B)$ as phase space. More comments
about this can be found in \cite[Section 4.4]{ALZ0}

Our description is complete and reveals as a critical `Nyquist density' the
real number $\frac{\alpha }{\pi }$, meaning that \emph{the condition }$%
D^{-}(Z)>\frac{\alpha }{\pi }$\emph{\ characterizes sets of sampling}, while 
\emph{the condition }$D^{+}(Z)<\frac{\alpha }{\pi }$\emph{\ characterizes
sets of interpolation}. According to the correspondences outlined in the
introduction paragraph above, this corresponds to saying that $\mathcal{G}%
_{\nu }\left( h_{0},Z\right) $, the Gabor system (\ref{Gabor}) is a \emph{%
frame} for ${L}_{\nu }^{2}(0,1)$ if and only if $D^{-}(Z)>1$ and it is a 
\emph{Riesz basic sequence} for ${L}_{\nu }^{2}(0,1)$ if and only if $%
D^{+}(Z)>1$\ (see Section 2.3 for precise definitions of the concepts
invoked in this paragraph). In contrast with more elaborated cylindrical
settings \cite{Varolin,Varolin1}, where the description of interpolation is
not complete (due to the punctured Riemann surface structure, the
description lacks the case of equality in the necessary condition for
interpolation), our density inequalities for interpolation are strict.

In the companion paper \cite{ALZ0}, the basic theory of the Short Time
Fourier Transform (STFT) acting on functions satisfying (\ref{funct-equa})
has been developed. Resembling the case of general real-valued functions 
\cite{Abr2010}, Gaussian windows lead to Fock spaces of analytic functions
on $\mathbb{C}/\mathbb{Z}$, Hermite functions give eigenspaces of the Landau
operator on $\mathbb{C}/\mathbb{Z}$, and a vectorial STFT with vectorial
window constituted by the first $n$ Hermite functions, is linked to Fock
spaces of polyanalytic functions on $\mathbb{C}/\mathbb{Z}$.

The proofs follow Beurling's approach \cite{Beurling}, which has been
previously adapted to the Fock space \cite{seip1,seip2} and to the Bergman
space \cite{SeipDisc}. A key step is the construction of functions with
proper growth vanishing exactly on a prescribed set of points. In the Fock
and Bergman spaces, constructions involving Weierstrass or Blaschke products
have been used, but here we introduce new functions vanishing exactly on
prescribed sets of points in the region (\ref{strip}). Some of the proofs in 
\cite{Beurling} and \cite{seip1,seip2} carry over verbatim to our case and
we will not repeat them here. Other steps require some adaptations and some
issues arise due to the geometry of the vertical strip, which is only
invariant by translations by $z$ if $\mathrm{Im}(z)\in \frac{\pi }{\alpha }%
\mathbb{Z}$.

At some points we diverge from the scheme of \cite{Beurling,seip1} and
slightly simplify the proof of the necessary part of the sampling theorem in 
\cite{seip1}. Since every sequence of sampling in the Hilbert space $%
\mathcal{F}_{2,\nu }^{\alpha }\left( \mathbb{C}/\mathbb{Z}\right) $ is a
sampling sequence of its $p=\infty $ version $\mathcal{F}_{\infty ,\nu
}^{\alpha }\left( \mathbb{C}/\mathbb{Z}\right) $, it is only required to
prove that a sequence of sampling in $\mathcal{F}_{\infty ,\nu }^{\alpha
}\left( \mathbb{C}/\mathbb{Z}\right) $ is separated to conclude the same for 
$\mathcal{F}_{2,\nu }^{\alpha }\left( \mathbb{C}/\mathbb{Z}\right) $. It
turns out that this is easier to prove for $\mathcal{F}_{\infty ,\nu
}^{\alpha }\left( \mathbb{C}/\mathbb{Z}\right) $ than for $\mathcal{F}%
_{2,\nu }^{\alpha }\left( \mathbb{C}/\mathbb{Z}\right) $. Also, our proof
that sampling sets for $\mathcal{F}_{2,\nu }^{\alpha }\left( \mathbb{C}/%
\mathbb{Z}\right) $ are sets of uniqueness for $\mathcal{F}_{\infty ,\nu
}^{\alpha }\left( \mathbb{C}/\mathbb{Z}\right) $ (Proposition \ref%
{sam-to-uni}) is a bit more direct than the one in \cite{seip1}. Moreover,
Seip's arguments in \cite{seip1,SeipDisc} rely on the orthogonality on
concentric discs of the eigenfunctions of localization operators \cite{Seip0}%
. We could not find corresponding objects in our setting and had to
circumvent the argument.

The paper can be simply outlined. In the next section we collect the
essential concepts about Bargmann-Fock spaces in $\mathbb{C}/\mathbb{Z}$ and
introduce the notions of lower and upper Beurling densities of $Z\subset
\lbrack 0,1)\times \mathbb{R}$, required to state the main results of the
paper, Theorem \ref{MainSampling} on sampling and Theorem \ref%
{MainInterpolation} on interpolation. Then, in section 3 and section 4, we
give the proofs of the sampling and interpolation results, respectively.

\section{Concepts}

\subsection{The Bargmann-Fock space on a vertical strip}

Consider a lattice $\Gamma =\mathbb{Z}\omega $, $\omega \in \mathbb{C}%
\setminus \{0\}$, and the Fock space of $\Gamma $-quasi-periodic analytic
functions (analytic functions belonging to the quotient group $\mathbb{C}%
/\Gamma $), normed by integration with the Fock weight over a fundamental
domain $\Lambda (\Gamma )$ (a set\ representing $\mathbb{C}/\Gamma $, whose $%
\Gamma $-translations are disjoint and cover $\mathbb{C}$). Without loss of
generality, we may set $\omega =1$. With this simplification, $\Gamma =%
\mathbb{Z}$, $\Lambda (\mathbb{Z})=[0,1)\times \mathbb{R}$, and $\mathcal{F}%
_{2,\nu }^{\alpha }(\mathbb{C}/\mathbb{Z})$ is the space defined in the
Introduction. One can check directly that $\mathcal{B}\left( e_{k,\nu
}\right) =e^{\frac{\alpha }{2}z^{2}+2i\pi (k+\nu )z}$, and this provides the
space $\mathcal{F}_{2,\nu }^{\alpha }\left( \mathbb{C}/\mathbb{Z}\right) $
with an orthogonal basis (see also \cite{GhanmiIntissar2013} or \cite[%
Section 3]{ALZ0}), for which we will use the notation 
\begin{equation}
\varphi _{k}(z):=e^{\frac{\alpha }{2}z^{2}+2i\pi (k+\nu )z}\text{,}\quad
k\in \mathbb{Z}\text{.}  \label{basisF}
\end{equation}%
Moreover, the space $\mathcal{F}_{2,\nu }^{\alpha }\left( \mathbb{C}/\mathbb{%
Z}\right) $ has a reproducing kernel given by{%
\begin{equation*}
K_{\mathcal{F}_{2,\nu }^{\alpha }\left( \mathbb{C}/\mathbb{Z}\right)
}(z,w)=\sum_{k\in \mathbb{Z}}\frac{\varphi _{k}(z)\overline{\varphi _{k}(w)}%
}{\Vert \varphi _{k}\Vert _{\mathcal{F}_{2,\nu }^{\alpha }\left( \mathbb{C}/%
\mathbb{Z}\right) }^{2}}=\sqrt{\frac{2\alpha }{\pi }}e^{\frac{\alpha }{2}%
(z^{2}+\overline{w}^{2})}\theta _{\nu ,0}\left( z-\overline{w},\frac{2i\pi }{%
\alpha }\right) \text{.}
\end{equation*}%
} Here $\theta _{\alpha ,\beta }$ is the Jacobi theta function. 
\begin{equation}
\theta _{\alpha ,\beta }(z,\tau ):=\sum_{k\in \mathbb{Z}}e^{i\pi (k+\alpha
)^{2}\tau +2\pi i(k+\alpha )(z+\beta )},\quad \text{Im}(\tau )>0\text{.}
\label{classical-theta}
\end{equation}

\begin{remark}
The reproducing kernel $K_{\mathcal{F}_{2,\nu }^{\alpha }\left( \mathbb{C}/%
\mathbb{Z}\right) }$ can also be obtained by the periodization principle.
Namely, let $K(z,w)=\frac{\alpha }{\pi }e^{z\overline{w}}$ be the
reproducing kernel of the classical Fock space, then 
\begin{equation*}
K_{\mathcal{F}_{2,\nu }^{\alpha }\left( \mathbb{C}/\mathbb{Z}\right) }(z,w)=%
\frac{\alpha }{\pi }e^{\alpha z\overline{w}}\sum_{k\in \mathbb{Z}}e^{-2\pi
ik\nu }e^{-\alpha zk+\alpha k\overline{w}-\frac{\alpha }{2}k^{2}}\text{.}
\end{equation*}
\end{remark}

We consider also the Banach space $\mathcal{F}_{\infty ,\nu }^{\alpha
}\left( \mathbb{C}/\mathbb{Z}\right) $, the space of holomorphic functions
on $\mathbb{C}$ satisfying the functional equation (\ref{funct-equa}),
equiped with the norm%
\begin{equation*}
\left\Vert F\right\Vert _{\mathcal{F}_{\infty ,\nu }^{\alpha }\left( \mathbb{%
C}/\mathbb{Z}\right) }=\sup_{z\in \mathbb{C}/\Gamma }|F(z)|e^{-\frac{\alpha 
}{2}|z|^{2}}=\sup_{z\in \lbrack 0,1)\times \mathbb{R}}|F(z)|e^{-\frac{\alpha 
}{2}|z|^{2}}\text{.}
\end{equation*}%
At some point in the argument we will also use the (classical) Banach space $%
\mathcal{F}_{\infty ,\nu }^{\alpha }\left( \mathbb{C}\right) $, of
holomorphic functions on $\mathbb{C}$ equipped with the norm%
\begin{equation*}
\left\Vert F\right\Vert _{\mathcal{F}_{\infty }^{\alpha }\left( \mathbb{C}%
\right) }=\sup_{z\in \mathbb{C}}|F(z)|e^{-\frac{\alpha }{2}|z|^{2}}\text{.}
\end{equation*}%
If $F\in \mathcal{F}_{\infty ,\nu }^{\alpha }\left( \mathbb{C}/\mathbb{Z}%
\right) $, then 
we have the following identity, which is going to be useful in the proof of
interpolation:%
\begin{equation}
\left\Vert F\right\Vert _{\mathcal{F}_{\infty ,\nu }^{\alpha }\left( \mathbb{%
C}/\mathbb{Z}\right) }=\left\Vert F\right\Vert _{\mathcal{F}_{\infty
}^{\alpha }\left( \mathbb{C}\right) }\text{.}  \label{normequality}
\end{equation}

\subsection{Beurling density and sets of sampling and interpolation}

A sequence $Z=\{z_{k}\}\subset \Lambda (\mathbb{Z})$ is said to be separated
if 
\begin{equation*}
\delta :=\inf_{j\neq k}\left\vert z_{j}-z_{k}\right\vert >0\text{.}
\end{equation*}%
The constant $\delta $ is called the separation constant of $Z$. Given a
sequence $Z$, we write 
\begin{equation*}
\Vert F|Z\Vert _{2,\alpha }=\left( \sum_{z\in Z}|F(z)|^{2}e^{-\alpha
|z|^{2}}\right) ^{1/2}
\end{equation*}%
and%
\begin{equation*}
\Vert F|Z\Vert _{\infty ,\alpha }=\sup_{z\in Z}|F(z)|e^{-\frac{\alpha }{2}%
|z|^{2}}\text{.}
\end{equation*}%
The definition of a sampling sequence in $\mathcal{F}_{2,\nu }^{\alpha
}\left( \mathbb{C}/\mathbb{Z}\right) $ contains, as first requirement, the
existence of a constant $M>0$ such that the upper inequality holds 
\begin{equation}
M\Vert F|Z\Vert _{2,\alpha }\leq \Vert F\Vert _{\mathcal{F}_{2,\nu }^{\alpha
}\left( \mathbb{C}/\mathbb{Z}\right) }\text{.}  \label{upper}
\end{equation}%
In our proofs, this inequality will hold by the assumptions made in the
results, because a standard argument can be used to show that this is the
case if and only if $Z$ can be written as a finite union of separated sets
(see, e. g. Lemma 7.1 in \cite{seip1}).

\begin{definition}
We will use $M_{2}^{-}(Z,\alpha )$ to denote the smallest constant $M$ such
that%
\begin{equation*}
\Vert F\Vert _{\mathcal{F}_{2,\nu }^{\alpha }\left( \mathbb{C}/\mathbb{Z}%
\right) }\leq M\Vert F|Z\Vert _{2,\alpha }\text{,}
\end{equation*}%
while $M_{\infty }(Z,\alpha )$ is the smallest constant such that 
\begin{equation*}
\Vert F\Vert _{\mathcal{F}_{\infty ,\nu }^{\alpha }\left( \mathbb{C}/\mathbb{%
Z}\right) }\leq M\Vert F|Z\Vert _{\infty ,\alpha }\text{.}
\end{equation*}%
A sequence $Z$ is sampling for $\mathcal{F}_{\infty ,\nu }^{\alpha }\left( 
\mathbb{C}/\mathbb{Z}\right) $ if $M_{\infty }(Z,\alpha )<\infty $. It is
sampling for $\mathcal{F}_{2,\nu }^{\alpha }\left( \mathbb{C}/\mathbb{Z}%
\right) $ if $M_{2}^{-}(Z,\alpha )<\infty $ and the upper bound $%
M_{2}^{+}(Z,\alpha )\Vert F|Z\Vert _{2,\alpha }\leq \Vert F\Vert _{\mathcal{F%
}_{2,\nu }^{\alpha }\left( \mathbb{C}/\mathbb{Z}\right) }$ also holds. $%
M_{2}^{-}(Z,\alpha )$ is called the lower sampling constant and $%
M_{2}^{+}(Z,\alpha )$ is called the upper sampling constant.
\end{definition}

\begin{remark}
One can in many cases find some constant satisfying the inequality in the
above definition, but the determination of the optimal constant $%
M_{2}^{-}(Z,\alpha )$ and $M_{2}^{+}(Z,\alpha )$ is a very challenging
problem. See \cite{FauShaf}, for an overview and a thorough discussion of
the case of sampling sequences in $\mathcal{F}_{2}^{\alpha }\left( \mathbb{C}%
\right) $ with rectangular lattices.
\end{remark}

\begin{definition}
A sequence $Z$ is said to be interpolating for $\mathcal{F}_{2,\nu }^{\alpha
}\left( \mathbb{C}/\mathbb{Z}\right) $, if there exists a constant $C$ such,
for every sequence $(a_{k})_{k\in \mathbb{Z}}$ such that%
\begin{equation*}
\left( a_{k}e^{-\frac{\alpha }{2}|z_{k}|^{2}}\right) _{k\in \mathbb{Z}}\in
\ell ^{2}(\mathbb{Z})\text{,}
\end{equation*}%
one can find an interpolating function $F\in \mathcal{F}_{2,\nu }^{\alpha
}\left( \mathbb{C}/\mathbb{Z}\right) $, such that$\ F(z_{m})=a_{m}$ for all $%
z_{m}\in Z$ and%
\begin{equation}
\Vert F\Vert _{\mathcal{F}_{2,\nu }^{\alpha }\left( \mathbb{C}/\mathbb{Z}%
\right) }\leq C\Vert F|Z\Vert _{2,\alpha }\text{.}  \label{stab}
\end{equation}%
Then $N_{2}(Z,\alpha )$ is defined to be the smallest constant $C$ in (\ref%
{stab}).
\end{definition}

The set $I_{w,r}$, defined below, will play in the geometry of the vertical
strip, the same as the disc $D_{r}(w)$ does in the Euclidean geometry of $%
\mathbb{C}$: 
\begin{equation}
I_{w,r}=[0,1)\times \lbrack \mathrm{Im}(w)-\frac{r}{2},\mathrm{Im}(w)+\frac{r%
}{2}]\text{.}  \label{Idisc}
\end{equation}

\begin{definition}
For a given sequence $Z$ of distinct numbers in $\Lambda (\Gamma )$, let $%
n(Z,I_{w,r})$ denote the number of points in $Z\cap I_{w,r}$. Then the lower
and upper Beurling densities of $Z$ are given, respectively, by 
\begin{equation*}
D^{-}(Z)=\liminf_{r\rightarrow +\infty }\inf_{w\in \Lambda(\Gamma) }\frac{%
n(Z,I_{w,r})}{r}
\end{equation*}%
and 
\begin{equation*}
D^{+}(Z)=\limsup_{r\rightarrow +\infty }\sup_{w\in \Lambda(\Gamma) }\frac{%
n(Z,I_{w,r})}{r}\text{.}
\end{equation*}
\end{definition}

We point out that this\ density is different from the one defined for
cylindrical measures in the punctured plane in \cite{Varolin,Varolin1},
which is obtained from the covering space, while this one is adapted to the
fundamental domain of the cylindrical surface.\ We are now in the position
to state the main results of this paper.

\begin{theorem}
\label{MainSampling}A sequence $Z$ is sampling for $\mathcal{F}_{2,\nu
}^{\alpha }\left( \mathbb{C}/\mathbb{Z}\right) $ if and only if it can be
expressed as a finite union of separated sets and contains a separated
sequence $Z^{\prime }$ such that $D^{-}(Z^{\prime })>\frac{\alpha }{\pi }$.
\end{theorem}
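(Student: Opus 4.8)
The plan is to follow Beurling's scheme as adapted to Fock spaces by Seip--Wallstén \cite{seip1,seip2}, transposing each step to the cylindrical geometry of $\Lambda(\mathbb{Z})$ and to the translation-by-$\frac{\pi}{\alpha}\mathbb{Z}$ invariance of the setting. The necessity direction splits into two parts. First, that a sampling sequence for $\mathcal{F}_{2,\nu}^{\alpha}(\mathbb{C}/\mathbb{Z})$ must be a finite union of separated sets: this follows from the upper inequality \eqref{upper} by the standard argument referenced after \eqref{upper} (Lemma 7.1 in \cite{seip1}). Second, that $Z$ must contain a separated subsequence $Z'$ with $D^{-}(Z')>\frac{\alpha}{\pi}$. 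For this I would pass to the weak-$\ast$ limit spaces: one shows that if $Z$ is sampling for $\mathcal{F}_{2,\nu}^{\alpha}(\mathbb{C}/\mathbb{Z})$, then $Z$ is a set of uniqueness for $\mathcal{F}_{\infty,\nu}^{\alpha}(\mathbb{C}/\mathbb{Z})$ (this is Proposition~\ref{sam-to-uni} in the paper, to be invoked), and moreover that every weak limit of translates of a separated sampling subsequence remains a set of uniqueness for $\mathcal{F}_{\infty,\nu}^{\alpha}(\mathbb{C}/\mathbb{Z})$. Using the identity \eqref{normequality}, $\mathcal{F}_{\infty,\nu}^{\alpha}(\mathbb{C}/\mathbb{Z})$ embeds isometrically into $\mathcal{F}_{\infty}^{\alpha}(\mathbb{C})$, so a set of uniqueness there propagates, after a limiting argument along vertical translates (the only translates preserving the strip up to the quasi-periodicity factor are those in $\frac{\pi}{\alpha}\mathbb{Z}$, which must be handled with care), to a uniqueness statement about the classical Fock space; the density bound $D^{-}\geq\frac{\alpha}{\pi}$ then comes from the known density theorem for $\mathcal{F}_{\infty}^{\alpha}(\mathbb{C})$, and strictness is upgraded exactly as in \cite[Section 6]{seip1} by a perturbation/removal-of-a-point argument, which is where one uses that $Z$ is a \emph{finite union} of separated sets.

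For sufficiency, assume $Z$ contains a separated $Z'$ with $D^{-}(Z')>\frac{\alpha}{\pi}$; it suffices to prove $Z'$ is sampling. The heart of the matter is the construction of an entire function in (or with controlled growth relative to) $\mathcal{F}_{\infty,\nu}^{\alpha}(\mathbb{C}/\mathbb{Z})$ vanishing precisely on a prescribed separated set in the strip — the paper advertises that it introduces \emph{new functions} playing the role of Weierstrass products in the classical case, built from the theta function $\theta_{\nu,0}(\cdot,\tfrac{2i\pi}{\alpha})$ appearing in the reproducing kernel. Granting such a building block, one runs Beurling's argument: suppose $Z'$ is not sampling; then there is a normalized sequence $F_n\in\mathcal{F}_{2,\nu}^{\alpha}(\mathbb{C}/\mathbb{Z})$ with $\|F_n|Z'\|_{2,\alpha}\to 0$. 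Pick points $w_n$ nearly attaining the sup norm of (a suitable renormalization of) $F_n$, translate vertically by the appropriate element of $\frac{\pi}{\alpha}\mathbb{Z}$ (plus a bounded correction absorbed by the quasi-periodicity relation \eqref{funct-equa}) to recenter, and extract a weak-$\ast$ limit $F\in\mathcal{F}_{\infty,\nu}^{\alpha}(\mathbb{C}/\mathbb{Z})$, nonzero, vanishing on the weak limit $Z''$ of the translated copies of $Z'$. One then shows $D^{-}(Z'')\geq D^{-}(Z')>\frac{\alpha}{\pi}$, so by the constructed vanishing function and a counting/Jensen-type argument in the strip $Z''$ is a set of uniqueness for $\mathcal{F}_{\infty,\nu}^{\alpha}(\mathbb{C}/\mathbb{Z})$ — contradicting $F\neq 0$, $F|_{Z''}=0$.

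The main obstacle I expect is precisely the vanishing-function construction and the attendant growth estimate: one needs an entire function, quasi-periodic with the factor in \eqref{funct-equa}, whose zero set in $\Lambda(\mathbb{Z})$ is a given separated set and whose modulus is comparable to $e^{\frac{\alpha}{2}|z|^2}$ up to controlled local factors, so that division by it stays in the right space. Because the strip is invariant only under the sparse vertical subgroup $\frac{\pi}{\alpha}\mathbb{Z}$ (rather than all real translations, as $\mathbb{C}$ is), the usual translation-averaging tricks from \cite{seip1} do not apply verbatim, and the theta-function machinery must be used to supply both periodicity-in-the-real-direction and the Gaussian growth in the imaginary direction simultaneously. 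A secondary difficulty is bookkeeping in the weak-$\ast$ compactness arguments: ensuring that the recentering translates can be chosen within $\frac{\pi}{\alpha}\mathbb{Z}$ while still capturing the sup-norm, and that Beurling densities in the strip are preserved under such limits. Once these geometric adaptations are in place, the remaining steps — uniqueness implies sampling via a normal families argument, and the equivalence with the Gabor/theta-Gabor frame statement via \eqref{ident} and \eqref{BargDef} — transfer from the classical theory essentially without change.
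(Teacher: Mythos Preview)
Your overall architecture (pass through $\mathcal{F}_{\infty,\nu}^{\alpha}$, use weak limits of vertical translates in $\frac{\pi}{\alpha}\mathbb{Z}$, build a vanishing function) matches the paper's, but two of the key steps you propose do not go through as stated.

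\textbf{Necessity.} The reduction to the classical Fock space via \eqref{normequality} points the wrong way. The inclusion $\mathcal{F}_{\infty,\nu}^{\alpha}(\mathbb{C}/\mathbb{Z})\subset\mathcal{F}_{\infty}^{\alpha}(\mathbb{C})$ means that a uniqueness set for the \emph{larger} space $\mathcal{F}_{\infty}^{\alpha}(\mathbb{C})$ restricts to one for the quasi-periodic space, not conversely. Concretely: if $D^{-}(Z)<\frac{\alpha}{\pi}$, the classical theory gives a nonzero $F\in\mathcal{F}_{\infty}^{\alpha}(\mathbb{C})$ vanishing on $Z+\mathbb{Z}$, but this $F$ has no reason to satisfy \eqref{funct-equa}, so it does not contradict sampling on the cylinder. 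The paper does \emph{not} reduce to the planar theorem here; instead it proves the bound directly (Theorem~\ref{SampInf}) by constructing, for each large $r_n$, an explicit test function
\[
G(z)=C\,e^{\frac{\alpha}{2}z^{2}+2i\pi\kappa_n z}\prod_{\xi\in Z_1}(e^{2i\pi z}-e^{2i\pi\xi})\prod_{\xi\in Z_2}(e^{-2i\pi z}-e^{-2i\pi\xi})
\]
inside $\mathcal{F}_{\infty,\kappa_n}^{\alpha}$, expanded in the basis \eqref{basisF}, with small samples on $Z$ but norm bounded below. Strictness then comes from the $\alpha\mapsto\alpha+\epsilon$ perturbation of Lemma~\ref{lem2}, not from removing a point.

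\textbf{Sufficiency.} Your weak-limit contradiction argument would at best yield sampling for $\mathcal{F}_{\infty,\nu}^{\alpha}$, not for $\mathcal{F}_{2,\nu}^{\alpha}$: from $\Vert F_n\Vert_{\mathcal{F}_{2,\nu}^{\alpha}}=1$ one cannot guarantee a nonzero normal-families limit in $\mathcal{F}_{\infty,\nu}^{\alpha}$ (the strip has infinite area, so there is no lower bound $\Vert\cdot\Vert_\infty\gtrsim\Vert\cdot\Vert_2$), and there is no soft passage from $\mathcal{F}_\infty$-sampling back to $\mathcal{F}_2$-sampling. The paper proceeds constructively, as in \cite{seip2}: after reducing (via Beurling's lemma) to $Z$ uniformly close to $i\frac{\pi}{\beta}\mathbb{Z}$ with $\beta>\alpha$, it builds the product
\[
G(z)=e^{\frac{\alpha}{2}z^{2}}\prod_{k\ge0}\bigl(1-e^{2i\pi(z_k-z)}\bigr)\prod_{k<0}\bigl(1-e^{2i\pi(z-z_k)}\bigr),
\]
proves two-sided growth bounds (Lemma~\ref{estimatesg}), derives a Lagrange-type expansion of any $F\in\mathcal{F}_{\infty,\nu}^{\alpha}$ in terms of its samples, and obtains the $\mathcal{F}_{2}$ lower bound by tiling the strip with $\Omega+ik\frac{\pi}{\alpha}$, applying the formula on each tile, and summing via Cauchy--Schwarz. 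Note also that the building block is \emph{not} theta-based as you conjecture: the factors $(1-e^{2i\pi(z_k-z)})$ are simple $\mathbb{Z}$-periodic entire functions with one zero per period, and the Gaussian growth is supplied by the prefactor $e^{\frac{\alpha}{2}z^2}$.
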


\begin{theorem}
\label{MainInterpolation}A sequence $Z$ is interpolating for $\mathcal{F}%
_{2,\nu }^{\alpha }\left( \mathbb{C}/\mathbb{Z}\right) $ if and only if it
is separated and $D^{+}(Z)<\frac{\alpha }{\pi }$.
\end{theorem}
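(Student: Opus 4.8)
The plan is to follow Beurling's scheme as adapted by Seip--Wallst\'en to the Fock space, but transported to the cylinder via the reproducing-kernel and orthogonal-basis structure recorded above, and to split the equivalence into its two implications. For the \emph{necessity} direction, assume $Z$ is interpolating for $\mathcal{F}_{2,\nu }^{\alpha }(\mathbb{C}/\mathbb{Z})$. First I would observe that separation is forced: applying the interpolation property to the standard basis vectors of $\ell^2(\mathbb Z)$ produces, for each $z_k\in Z$, a function $F_k$ with $F_k(z_k)=1$, $F_k(z_j)=0$ for $j\neq k$, and $\|F_k\|^2 \le C^2 e^{-\alpha|z_k|^2}$; combining this with the pointwise estimate $|G(z)|e^{-\frac{\alpha}{2}|z|^2}\lesssim \|G\|_{\mathcal F_{2,\nu}^\alpha(\mathbb C/\mathbb Z)}$ (which follows from the reproducing kernel, whose diagonal $K(z,z)$ is comparable to $e^{\alpha|z|^2}$ uniformly on the strip by the theta-function formula) gives a uniform lower bound on $|z_j-z_k|$ by a Bernstein/normal-families argument exactly as in \cite{seip1}. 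For the density bound $D^{+}(Z)<\frac{\alpha}{\pi}$, the key is a comparison-of-densities argument: one shows that if $D^{+}(Z)\ge \frac{\alpha}{\pi}$ then $Z$ cannot be interpolating, by exhibiting, near some $I_{w,r}$ with too many points, a nonzero function in $\mathcal F_{2,\nu}^\alpha(\mathbb C/\mathbb Z)$ that vanishes on $Z\cap I_{w,r}$ yet is too small in norm to absorb the prescribed data. This is where the new vanishing functions advertised in the introduction enter: one needs an analogue on the strip of the Weierstrass $\sigma$-product used in the plane, a quasi-periodic entire function vanishing exactly on a prescribed separated subset of $\Lambda(\mathbb Z)$ with Fock-type growth. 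The classical Jacobi theta function $\theta_{\nu,0}(\,\cdot\,,2i\pi/\alpha)$ already vanishes (with the right growth) on a single full $\mathbb Z$-orbit line, so products/modifications of theta functions translated vertically should provide the building blocks; controlling their growth uniformly requires the vertical-strip geometry remark, namely that translations preserving the strip shift $\mathrm{Im}$ by $\frac{\pi}{\alpha}\mathbb Z$.

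For the \emph{sufficiency} direction, assume $Z$ is separated with $D^{+}(Z)<\frac{\alpha}{\pi}$. The strategy is again Beurling's: first treat the $\mathcal F_\infty$ scale. One shows that a separated set with $D^{+}(Z)<\frac{\alpha}{\pi}$ is a set of interpolation for $\mathcal F_{\infty,\nu}^\alpha(\mathbb C/\mathbb Z)$ in the appropriate weighted-$\ell^\infty$ sense, using the identity (\ref{normequality}) to pass between the strip and the whole plane and then invoking (or re-running) the Seip--Wallst\'en plane argument for the lifted, genuinely quasi-periodic function; the strict inequality is exactly what buys enough room to build an interpolating function with controlled sup-norm by summing suitably normalized ``peak'' functions $K(\cdot,z_k)/K(z_k,z_k)$ times correction factors that kill the off-diagonal contributions, the corrections being furnished by the vanishing functions from the necessity step. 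Then I would upgrade from $\ell^\infty$ to $\ell^2$ interpolation: a separated interpolating set for the $\infty$-space which additionally satisfies $D^{+}(Z)<\frac{\alpha}{\pi}$ strictly is interpolating for the Hilbert space $\mathcal F_{2,\nu}^\alpha(\mathbb C/\mathbb Z)$, via a standard duality/perturbation argument (realize the interpolation operator as bounded from $\ell^2$ by interpolating between the obvious $\ell^\infty\to\mathcal F_\infty$ bound and an $\ell^1\to\mathcal F_1$-type bound, or equivalently by a Schur-test estimate on the Gram-type matrix $\big(K(z_j,z_k)/\sqrt{K(z_j,z_j)K(z_k,z_k)}\big)$, whose decay is controlled by the exponential decay of the theta kernel off the diagonal together with separation).

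I expect the main obstacle to be the \textbf{construction of the vanishing functions} with the correct Fock growth on the strip --- this is the one place the introduction explicitly flags as new, and it is the engine behind both the off-diagonal corrections in sufficiency and the small-norm extremal function in necessity. In the plane one has the homogeneous flexibility of the Weierstrass product and Seip's precise growth estimates for it; on the cylinder the object must be $\mathbb Z$-quasi-periodic in the sense of (\ref{funct-equa}), so one is forced to work with products of vertically translated Jacobi theta functions (or ratios thereof), and the delicate point is a uniform two-sided growth estimate for such products over the whole strip, matching $e^{\frac{\alpha}{2}|z|^2}$ up to a factor governed by the counting function $n(Z,I_{z,r})$ --- essentially a strip-version of the ``$\sum \log|1-z/z_k|$ is comparable to the logarithmic potential of the counting measure'' estimate, complicated by the fact that the counting measure now lives on a cylinder and the relevant potential is periodic in the horizontal direction. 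A secondary obstacle is that, as the authors note, Seip's use of orthogonality of localization-operator eigenfunctions on concentric discs is unavailable here, so the passage between scales and the extraction of separation must be done by more hands-on normal-families and reproducing-kernel arguments; I would lean on the explicit theta-function form of $K_{\mathcal F_{2,\nu}^\alpha(\mathbb C/\mathbb Z)}$ throughout to get the quantitative kernel estimates these arguments need.
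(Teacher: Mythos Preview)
Your high-level framework (Beurling's scheme \`a la Seip--Wallst\'en, transported to the strip) is correct, and you rightly single out the construction of quasi-periodic ``Weierstrass-type'' vanishing functions with two-sided Fock growth as the main new ingredient. But you have essentially swapped the roles of the two key tools, and this causes a genuine gap in the necessity direction.

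In the paper, the vanishing functions are used for \emph{sufficiency}, not necessity. Given $D^{+}(Z)<\alpha/\pi$, the authors (after Beurling's enlargement to a sequence uniformly close to $i\frac{\pi}{\beta}\mathbb Z$ with $\beta<\alpha$) write down an explicit Lagrange-type interpolant
\[
F(z)=\sum_n a_n\, e^{\alpha(z-z_n)\overline{w_n}+\frac{\alpha}{2}((z-w_n)^2-(z_n-w_n)^2)}\,\frac{G_n(z)}{G_n(z_n)},
\]
where $G_n$ is a product of factors $(1-e^{\pm 2i\pi(z_k-z)})$, not translated theta functions, and then feed in the two-sided growth bounds of Lemma~\ref{estimatesg} to check $F\in\mathcal F_{2,\nu}^\alpha(\mathbb C/\mathbb Z)$ directly. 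There is no preliminary $\ell^\infty$-interpolation step and no Schur/duality upgrade; your route might be made to work, but it is not what is done, and the kernel-matrix Schur test you propose would still require exactly the off-diagonal control that these product estimates provide.

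Conversely, the passage to the plane via (\ref{normequality}) is used for \emph{necessity}, not sufficiency. The density bound $D^{+}(Z)<\alpha/\pi$ is obtained through the extremal quantity $\varrho_2(z,Z)$ and the integral lower bound of Lemma~\ref{lowerInt}, after which one embeds $\mathcal F_{2,\nu}^\alpha(\mathbb C/\mathbb Z)\subset\mathcal F_{\infty,\nu}^\alpha(\mathbb C/\mathbb Z)$, uses $\|F\|_{\mathcal F_{\infty,\nu}^\alpha(\mathbb C/\mathbb Z)}=\|F\|_{\mathcal F_\infty^\alpha(\mathbb C)}$ to regard $F$ as an element of the \emph{classical} translation-invariant Fock space vanishing on $Z+\mathbb Z$, and then applies Jensen's formula on discs exactly as in \cite{seip1}. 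Your proposed necessity argument---``exhibit a nonzero function vanishing on $Z\cap I_{w,r}$ yet too small in norm to absorb the prescribed data''---is the mechanism behind the \emph{sampling} necessity proof (Theorem~\ref{SampInf}), not interpolation: a small-norm function vanishing on many points contradicts a lower sampling inequality, but does not by itself contradict the interpolation property. As written, that step does not close.
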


Observe that, according to the discussion in the Introduction (see \cite%
{ALZ0} for a detailed treatment), this is equivalent to the following
statement.

\begin{corollary}
Let $Z$ be a sequence in $[0,1)\times \mathbb{R}$. The Gabor system with $%
g_{0}(t)=2^{\frac{1}{4}}e^{-\pi t^{2}}$ in $L_{\nu }^{2}(0,1)$, 
\begin{equation*}
\mathcal{G}_{\nu }\left( g_{0},Z\right) =\{\Sigma _{\nu }\left( \pi
(z)g_{0}\right) \}=\{e^{2i\pi z_{2}t}\theta _{\nu -z_{2},z_{1}}(-t,i)\},%
\text{ \ }z=(z_{1},z_{2})\in Z\text{,}
\end{equation*}%
is a Gabor frame for $L_{\nu }^{2}(0,1)$ if and only if $Z$ can be expressed
as a finite union of separated sets and contains a separated sequence $%
Z^{\prime }$ such that $D^{-}(Z^\prime)>1$.
\end{corollary}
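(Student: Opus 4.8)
The plan is to deduce the Corollary directly from Theorem~\ref{MainSampling} via the chain of unitary identifications assembled in the Introduction, so that essentially no new analysis is required. First I would recall that the Bargmann transform $\mathcal{B}=\mathcal{B}^{\pi}$ is a unitary isomorphism $L_{\nu}^{2}(0,1)\to\mathcal{F}_{2,\nu}^{\pi}\left(\mathbb{C}/\mathbb{Z}\right)$, so that $Z$ is a set of sampling for $\mathcal{F}_{2,\nu}^{\pi}\left(\mathbb{C}/\mathbb{Z}\right)$ in the sense of~\eqref{samplingFockdef} precisely when the reproducing-kernel evaluations $\{K_{\mathcal{F}_{2,\nu}^{\pi}(\mathbb{C}/\mathbb{Z})}(\cdot,z)\}_{z\in Z}$, suitably normalized, form a frame for $\mathcal{F}_{2,\nu}^{\pi}\left(\mathbb{C}/\mathbb{Z}\right)$, equivalently when $\{\mathcal{B}^{-1}K_{\mathcal{F}_{2,\nu}^{\pi}(\mathbb{C}/\mathbb{Z})}(\cdot,z)\}_{z\in Z}$ is a frame for $L_{\nu}^{2}(0,1)$. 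The content of identity~\eqref{ident}, applied with the Gaussian window $g=g_{0}$ (which lies in $\mathcal{S}_{0}(\mathbb{R})$ and has unit $L^{2}$-norm), is exactly that $\langle f,\Sigma_{\nu}(\pi(z)g_{0})\rangle_{L_{\nu}^{2}(0,1)}=\langle f,\pi(z)g_{0}\rangle_{L^{2}(\mathbb{R})}$, and formula~\eqref{BargDef} shows this inner product is, up to the unimodular and Gaussian-exponential factors that are absorbed by the weight $e^{-\alpha|z|^{2}}$ with $\alpha=\pi$, the value $\mathcal{B}f(z)$. Hence~\eqref{frame1} holds, i.e.\ $\mathcal{G}_{\nu}(g_{0},Z)$ is a frame for $L_{\nu}^{2}(0,1)$, if and only if~\eqref{samplingFockdef} holds, i.e.\ $Z$ is sampling for $\mathcal{F}_{2,\nu}^{\pi}\left(\mathbb{C}/\mathbb{Z}\right)$.

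Next I would invoke Theorem~\ref{MainSampling} with the specialization $\alpha=\pi$, which gives: $Z$ is sampling for $\mathcal{F}_{2,\nu}^{\pi}\left(\mathbb{C}/\mathbb{Z}\right)$ if and only if $Z$ is a finite union of separated sets and contains a separated subsequence $Z'$ with $D^{-}(Z')>\alpha/\pi=1$. Combining this with the equivalence established in the previous paragraph yields the stated characterization of the frame property for $\mathcal{G}_{\nu}(g_{0},Z)$. The only remaining cosmetic point is to identify the two forms of the Gabor atoms: the Poisson-summation computation carried out in the Introduction for $\nu=0$ generalizes to give $\Sigma_{\nu}(\pi(z_{1},z_{2})g_{0})(t)=e^{2i\pi z_{2}t}\sum_{n\in\mathbb{Z}}e^{-\pi(n+z_{2})^{2}+2i\pi(n+z_{2}-\nu)(-t+z_{1})}=e^{2i\pi z_{2}t}\theta_{\nu-z_{2},z_{1}}(-t,i)$, upon matching this against the defining series~\eqref{classical-theta} of $\theta_{\alpha,\beta}$ with $\tau=i$; this is the same routine reindexing already indicated in the text, and I would present it as a one-line verification or simply cite~\cite{ALZ0}.

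There is really no substantive obstacle here: the Corollary is a translation of Theorem~\ref{MainSampling} under dictionary entries~\eqref{ident}, \eqref{BargDef} and the unitarity of $\mathcal{B}$, all of which are either proved in the cited companion paper~\cite{ALZ0} or recalled in the excerpt. The one place requiring a modicum of care is bookkeeping the weight: one must check that the factor $e^{-\pi x\xi}$ (unimodular) and $e^{\frac{\pi}{2}|z|^{2}}$ appearing in~\eqref{BargDef} convert the STFT frame inequality with flat weight into the Fock frame inequality with weight $e^{-\pi|z|^{2}}$ exactly, with no mismatch in the constants $A,B$; since $|e^{-i\pi x\xi+\frac{\pi}{2}|z|^{2}}|^{2}e^{-\pi|z|^{2}}=1$, this is immediate. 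Accordingly I would write the proof of the Corollary in two or three sentences, essentially: "By~\eqref{ident} and~\eqref{BargDef}, $\mathcal{G}_{\nu}(g_{0},Z)$ is a frame for $L_{\nu}^{2}(0,1)$ iff $Z$ is sampling for $\mathcal{F}_{2,\nu}^{\pi}\left(\mathbb{C}/\mathbb{Z}\right)$; apply Theorem~\ref{MainSampling} with $\alpha=\pi$; and the identification of the atoms follows from Poisson summation as in the Introduction."
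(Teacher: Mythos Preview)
Your proposal is correct and follows precisely the route the paper intends: the paper does not give a separate proof of this Corollary but simply states that ``according to the discussion in the Introduction (see \cite{ALZ0} for a detailed treatment), this is equivalent'' to Theorem~\ref{MainSampling}, which is exactly the chain of identifications via \eqref{ident}, \eqref{BargDef}, and the unitarity of $\mathcal{B}$ that you spell out. Your additional bookkeeping remark on the weight cancellation and the Poisson-summation identification of the atoms are just making explicit what the Introduction sketches.
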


\begin{corollary}
Let $Z$ be a sequence in $[0,1)\times \mathbb{R}$. The Gabor system $%
\mathcal{G}_{\nu }\left( g_{0},Z\right) $ is a Gabor Riesz sequence if and
only $Z$ is separated and $D^{+}(Z)<1$.
\end{corollary}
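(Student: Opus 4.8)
The plan is to deduce this corollary directly from Theorem~\ref{MainInterpolation} together with the unitary correspondence established in the Introduction, rather than to redo any analysis. First I would recall that the Bargmann transform $\mathcal{B}=\mathcal{B}^{\pi}$ is a unitary isomorphism $\mathcal{B}:L_{\nu}^{2}(0,1)\to\mathcal{F}_{2,\nu}^{\pi}(\mathbb{C}/\mathbb{Z})$, so that the case $\alpha=\pi$ of Theorem~\ref{MainInterpolation} applies; in this case the critical density $\tfrac{\alpha}{\pi}$ equals $1$, which is why the condition becomes $D^{+}(Z)<1$. Thus it suffices to show that the Gabor system $\mathcal{G}_{\nu}(g_{0},Z)$ being a Riesz (basic) sequence in $L_{\nu}^{2}(0,1)$ is equivalent to $Z$ being an interpolating sequence for $\mathcal{F}_{2,\nu}^{\pi}(\mathbb{C}/\mathbb{Z})$ in the sense of the third Definition above.

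The key step is the translation of the Riesz-sequence property into the interpolation property. Using the identity~\eqref{ident}, for $g=g_{0}$ and any $f\in L_{\nu}^{2}(0,1)$ one has $\langle f,\Sigma_{\nu}(\pi(z)g_{0})\rangle_{L_{\nu}^{2}(0,1)}=\langle f,\pi(z)g_{0}\rangle_{L^{2}(\mathbb{R})}$, and via~\eqref{BargDef} this inner product equals, up to the unimodular factor $e^{-i\pi z_{1}z_{2}}$ and the Gaussian weight $e^{-\tfrac{\pi}{2}|z|^{2}}$, the value $\mathcal{B}f(z_{1}+iz_{2})$ (with the appropriate sign convention on the frequency variable). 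Hence the analysis operator $f\mapsto(\langle f,\Sigma_{\nu}(\pi(z)g_{0})\rangle)_{z\in Z}$ is, after composing with the unitary $\mathcal{B}$ and a unimodular diagonal rescaling, exactly the sampling operator $F\mapsto(F(z)e^{-\tfrac{\pi}{2}|z|^{2}})_{z\in Z}$ on $\mathcal{F}_{2,\nu}^{\pi}(\mathbb{C}/\mathbb{Z})$. A Bessel sequence $\{\Sigma_{\nu}(\pi(z)g_{0})\}_{z\in Z}$ is a Riesz basic sequence precisely when this analysis operator is bounded below on the closed span of the system, equivalently when it maps onto (a closed subspace isomorphic to) $\ell^{2}$ with a bounded right inverse; by the standard duality between Riesz sequences and interpolating sequences in a reproducing-kernel Hilbert space, this is exactly the statement that $Z$ is an interpolating sequence for $\mathcal{F}_{2,\nu}^{\pi}(\mathbb{C}/\mathbb{Z})$ with the normalization $\bigl(a_{k}e^{-\tfrac{\pi}{2}|z_{k}|^{2}}\bigr)_{k}\in\ell^{2}$, matching the Definition above verbatim. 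I would also note that the factor $e^{2i\pi z_{2}t}\theta_{\nu-z_{2},z_{1}}(-t,i)$ is just the explicit form of $\Sigma_{\nu}(\pi(z)g_{0})$ obtained from Poisson summation as displayed in Subsection~1.3, so the two descriptions of $\mathcal{G}_{\nu}(g_{0},Z)$ coincide.

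With this dictionary in hand, the corollary is immediate: by Theorem~\ref{MainInterpolation} with $\alpha=\pi$, $Z$ is interpolating for $\mathcal{F}_{2,\nu}^{\pi}(\mathbb{C}/\mathbb{Z})$ if and only if $Z$ is separated and $D^{+}(Z)<1$, and by the equivalence just described this happens if and only if $\mathcal{G}_{\nu}(g_{0},Z)$ is a Gabor Riesz basic sequence for $L_{\nu}^{2}(0,1)$. One should be slightly careful that ``Riesz sequence'' here means ``Riesz basis for its closed linear span,'' i.e.\ the two-sided frame-type inequality $A\sum|c_{z}|^{2}\le\|\sum_{z}c_{z}\Sigma_{\nu}(\pi(z)g_{0})\|^{2}\le B\sum|c_{z}|^{2}$; separatedness of $Z$ is exactly what is needed for the upper (Bessel) bound, and it is also forced by $D^{+}(Z)<\infty$, so there is no gap.

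I expect the main obstacle to be purely bookkeeping: tracking the unimodular prefactors and the sign/complex-conjugation conventions in~\eqref{BargDef} and~\eqref{ident} so that the analysis operator of the Gabor system matches the sampling operator on the nose, and making sure the normalization $(a_{k}e^{-\tfrac{\pi}{2}|z_{k}|^{2}})_{k}\in\ell^{2}$ in the interpolation definition is precisely the one induced by transporting $\ell^{2}$-coefficient sequences through $\mathcal{B}$. There is no new hard analysis here; the substance is entirely in Theorem~\ref{MainInterpolation}, and the remaining work is to invoke the standard RKHS fact that a separated sequence is a Riesz sequence for the sampling operator if and only if it is an interpolating sequence, which is where the equivalence of the two density conditions gets inherited.
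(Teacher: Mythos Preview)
Your proposal is correct and matches the paper's approach: the corollary is stated there without a separate proof, as an immediate consequence of Theorem~\ref{MainInterpolation} (with $\alpha=\pi$) together with the Bargmann-transform correspondence from the Introduction and the standard RKHS equivalence between Riesz sequences of normalized reproducing kernels and interpolating sequences. One small inaccuracy in your final paragraph: $D^{+}(Z)<\infty$ does not by itself force separatedness (the density here only controls large-scale counts in the rectangles $I_{w,r}$), but this side remark is inessential since separatedness is already part of the characterization in Theorem~\ref{MainInterpolation}.
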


In the course of proving the sampling results for $\mathcal{F}_{2,\nu
}^{\alpha }\left( \mathbb{C}/\mathbb{Z}\right) $, we will also show that a
sequence $Z$ is sampling for $\mathcal{F}_{2,\nu }^{\alpha }\left( \mathbb{C}%
/\mathbb{Z}\right) $ if and only if it contains a separated sequence $%
Z^{\prime }$ such that $D^{-}(Z^{\prime })>\frac{\alpha }{\pi }$.

\begin{remark}
One can rephrase the problem considered in this paper in terms of several
variables, leading to the quasi-tori setting \cite{Ziyat,IZ2}. This seems to
be an extremely difficult problem, even in the Fock space of entire
functions. Partial results are known for sampling on lattices \cite{Groe2011}
and progress has been achieved on sampling and interpolation on lattices, by
replacing the notion of Beurling density (the inverse of the area of the
lattice domain) in terms of the Seshadri constant \cite{LueWang}. Some of
the results on the flat tori have been sucessfully extended to several
dimensions in \cite{JohannesTorus}.
\end{remark}

\begin{remark}
Other variants of the problem can be considered. Sufficient conditions for
Fock spaces of polyanalytic functions on $\mathbb{C}/\mathbb{Z}$ are
equivalent to Gabor frames with Hermite-theta functions \cite[Section 4]%
{ALZ0}, following a similar correspondence as in the $L^{2}\left( \mathbb{R}%
\right) $ case and a sufficient conditions has been obtained in \cite[%
Theorem 1]{ALZ0}, as an analogue of a result of Gr\"{o}chenig and Lyubarskii 
\cite{CharlyYura,CharlyYurasuper}, after uncovering structural properties of
Gabor frames in $L_{\nu }^{2}(0,1)$ reminiscent of the structure of Gabor
frames in $L^{2}\left( \mathbb{R}\right) $ \cite[Section 7]{Charly}. We
expect that an adaptation of the methods developed in this paper allows for
a treatment of sampling with derivatives \cite{BS93}, but we have so far not
pursued this direction.
\end{remark}

\section{Sampling}

\subsection{Necessary conditions for sampling}

Our main results in this subsection are the following necessary density
conditions for sets of sampling.

\begin{theorem}
\label{cct} If a sequence $Z$ is sampling for $\mathcal{F}_{2,\nu }^{\alpha
}\left( \mathbb{C}/\mathbb{Z}\right) $, then it can be expressed as a finite
union of separated sets and contains a separated sequence $Z^{\prime }$ such
that $D^{-}(Z^{\prime })>\frac{\alpha }{\pi }$.
\end{theorem}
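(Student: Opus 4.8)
The plan is to follow Beurling's scheme as adapted to the Fock space by Seip--Wallstén \cite{seip1,seip2}, splitting the statement into two parts: first that a sampling sequence $Z$ for $\mathcal{F}_{2,\nu}^{\alpha}(\mathbb{C}/\mathbb{Z})$ must be a finite union of separated sets, and second that it must contain a separated subsequence of lower Beurling density exceeding $\frac{\alpha}{\pi}$. For the first part I would invoke the standard argument (referenced in the excerpt after \eqref{upper}, via Lemma 7.1 of \cite{seip1}): the upper inequality in the definition of sampling forces the reproducing-kernel estimate $\sum_{z\in Z}|K(z,w)|^2 e^{-\alpha|z|^2} \lesssim \|K(\cdot,w)\|^2$ uniformly in $w$, which by the explicit form of $K_{\mathcal{F}_{2,\nu}^{\alpha}}$ (a theta function times a Gaussian, hence with Fock-type off-diagonal decay in the strip) yields a uniform bound on the number of points of $Z$ in any $I_{w,r}$ with $r$ fixed; partitioning $\Lambda(\mathbb{Z})$ into such cells and collecting one point per cell across finitely many translates exhibits $Z$ as a finite union of separated sets.

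For the density statement, I would first reduce to the $p=\infty$ space: as the excerpt notes, every sampling sequence for $\mathcal{F}_{2,\nu}^{\alpha}(\mathbb{C}/\mathbb{Z})$ is sampling for $\mathcal{F}_{\infty,\nu}^{\alpha}(\mathbb{C}/\mathbb{Z})$, so it suffices to prove that any $Z$ with $M_{\infty}(Z,\alpha)<\infty$ (after extracting a separated subsequence, using the first part) satisfies $D^{-}(Z)\geq\frac{\alpha}{\pi}$, and then upgrade ``$\geq$'' to ``$>$'' for the Hilbert-space case by a perturbation/compactness argument in the spirit of \cite{seip1,seip2}. The core of the argument is the weak-limit (or ``compactness of translates'') technique: suppose for contradiction that $D^{-}(Z)<\frac{\alpha}{\pi}$; then there are cells $I_{w_n,r_n}$ with $r_n\to\infty$ and $n(Z,I_{w_n,r_n})/r_n$ bounded below $\frac{\alpha}{\pi}$. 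Translating vertically by $-i\,\mathrm{Im}(w_n)$ — here is where the geometry of the strip intervenes, since vertical translation is \emph{not} an isometry of $\mathcal{F}_{2,\nu}^{\alpha}(\mathbb{C}/\mathbb{Z})$, only translation by elements of $\frac{\pi}{\alpha}\mathbb{Z}$ is, so I would translate along the sublattice $\frac{\pi}{\alpha}\mathbb{Z}$ to the nearest point and absorb the bounded remainder into a compactness argument — one passes to a weak limit $Z^{*}$ of the translated sequences. By stability of the sampling inequality under weak limits (a normal-families argument: the unit ball of $\mathcal{F}_{\infty,\nu}^{\alpha}$ is a normal family, and the sampling inequality passes to the limit), $Z^{*}$ is again sampling for $\mathcal{F}_{\infty,\nu}^{\alpha}(\mathbb{C}/\mathbb{Z})$, yet by construction $Z^{*}$ is ``too thin'' on large cells. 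One then derives a contradiction by exhibiting a nonzero $F\in\mathcal{F}_{\infty,\nu}^{\alpha}(\mathbb{C}/\mathbb{Z})$ that is small on all of $Z^{*}$ while having large norm — built from the new vanishing functions the authors promise to construct, which vanish exactly on a prescribed separated set in the strip of the appropriate density, playing the role of Weierstrass products.

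The main obstacle, as the excerpt itself flags, is twofold. First, the lack of vertical translation invariance: in the classical Fock space the whole plane is homogeneous and weak limits of translates are handled cleanly, whereas here one must thread translations through the lattice $\frac{\pi}{\alpha}\mathbb{Z}$ and control the bounded error terms, checking that the quasi-periodicity factor in \eqref{funct-equa} behaves correctly under the limiting procedure (the $\nu$-twist must be tracked). Second — and this is the genuinely new technical input — one needs the right family of ``interpolating'' or ``vanishing'' entire functions adapted to \eqref{funct-equa}: a function in $\mathcal{F}_{\infty,\nu}^{\alpha}(\mathbb{C}/\mathbb{Z})$ with controlled growth vanishing exactly on a given separated set $Z'$ with $D^{+}(Z')<\frac{\alpha}{\pi}$. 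Since the classical Weierstrass/Blaschke constructions are not quasi-periodic, I expect the authors to build these from products of shifted Jacobi theta functions (whose zero sets are lattices in the strip), and the delicate point will be the sharp growth estimate relating the exponent $\frac{\alpha}{2}|z|^2$ to the counting function via a Jensen-type / potential-theoretic argument on the cylinder. Once such functions are available, the contradiction closes exactly as in \cite[proof of the sampling theorem]{seip1}, and several of those steps can be cited verbatim.
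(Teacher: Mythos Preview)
Your overall architecture matches the paper's: the finite-union-of-separated statement via the upper bound and reproducing-kernel estimate (Lemma 7.1 of \cite{seip1}), the reduction to $\mathcal{F}_{\infty,\nu}^{\alpha}(\mathbb{C}/\mathbb{Z})$, the strict-inequality upgrade via a perturbation in $\alpha$ (the paper's Lemma \ref{lemma2}), and the careful use of vertical translations along $\frac{\pi}{\alpha}\mathbb{Z}$ to respect \eqref{funct-equa} are all exactly what the paper does. One point to note: the implication ``sampling for $\mathcal{F}_{2}$ $\Rightarrow$ sampling for $\mathcal{F}_{\infty}$'' that you cite from the introduction is not a free fact; it \emph{is} the content of the proof of Theorem \ref{cct}, and the paper establishes it through the chain Lemma \ref{limits} $\rightarrow$ Proposition \ref{sam-to-uni} $\rightarrow$ Lemma \ref{samp-weak-uniqueness}. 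So weak limits are used here, in the reduction step, and \emph{not} in the density contradiction itself, contrary to your plan.

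Where your proposal genuinely diverges is the contradiction step inside $\mathcal{F}_{\infty,\nu}^{\alpha}$ (the paper's Theorem \ref{SampInf}). You anticipate passing to a weak limit $Z^{*}$ and building an infinite theta-type product vanishing on (a subset of) $Z^{*}$, with a Jensen/potential-theoretic growth bound. The paper does something more elementary and more local: given a thin cell $I_{w_n,r_n}$ containing the $N$ points $\xi_1,\dots,\xi_N$ of $Z$, it writes down the \emph{finite} product
\[
g(z)=Ce^{\frac{\alpha}{2}z^{2}+2i\pi\kappa_n z}\prod_{\xi\in Z_1}\bigl(e^{2i\pi z}-e^{2i\pi\xi}\bigr)\prod_{\xi\in Z_2}\bigl(e^{-2i\pi z}-e^{-2i\pi\xi}\bigr),
\]
observes that $g$ lies in the span of the basis functions $F_k=e^{-\frac{\pi^2}{\alpha}(k+\kappa_n)^2}\varphi_k$ for $-\lfloor N/2\rfloor<k\leq\lfloor N/2\rfloor$, and uses the \emph{orthogonality of the $\varphi_k$ on the cell $I_n$} to get a lower bound $\|g\|_{\mathcal{F}_\infty}\gtrsim r_n^{-1/2}$, together with a direct Cauchy--Schwarz estimate to show $|g(z)|e^{-\frac{\alpha}{2}|z|^2}$ is Gaussian-small for $|\mathrm{Im}(z)|\geq(1+t)r_n/2$. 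Pulling $g$ back by the Weyl operator $T_{i\mathrm{Im}(w_n)}$ yields a sequence $H_n\in\mathcal{F}_{\infty,0}^{\alpha}$ with $\|H_n|Z\|_{\infty,\alpha}/\|H_n\|_{\mathcal{F}_\infty}\to 0$, contradicting sampling directly---no weak limit of $Z$ is taken. This finite-product/basis-orthogonality device is the paper's substitute for Seip's localization-operator eigenfunctions (which, as the introduction warns, have no clear analogue on the strip). Your infinite-product route might be made to work, but it faces the obstacle that $D^{-}(Z)<\alpha/\pi$ gives you no control on $D^{+}$, so a function vanishing on all of $Z^{*}$ need not lie in $\mathcal{F}_{\infty}^{\alpha}$; the paper's local construction sidesteps this entirely. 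The infinite products from Lemma \ref{estimatesg} that you had in mind are used only in the \emph{sufficiency} proofs.
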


The proof of Theorem \ref{cct} requires the following theorem:

\begin{theorem}
\label{SampInf} If a sequence $Z$ is sampling for $\mathcal{F}_{\infty ,\nu
}^{\alpha }\left( \mathbb{C}/\mathbb{Z}\right) $, then it contains a
separated sequence $Z^{\prime }$ such that $D^{-}(Z^{\prime })>\frac{\alpha 
}{\pi }$.
\end{theorem}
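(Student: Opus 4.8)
The plan is to follow Beurling's scheme as adapted to the Fock space by Seip--Wallstén \cite{seip1,seip2}, transporting it to the flat-cylinder geometry encoded by the strips $I_{w,r}$ of \eqref{Idisc}. The first step is to pass to a \emph{limit set}. If $Z$ is sampling for $\mathcal{F}_{\infty,\nu}^{\alpha}(\mathbb{C}/\mathbb{Z})$ with constant $M_{\infty}(Z,\alpha)<\infty$, then for any sequence of vertical translations $z_n$ with $\mathrm{Im}(z_n)\in\tfrac{\pi}{\alpha}\mathbb{Z}$ (the only translations preserving the strip, as the authors stress) one considers the translated sets $Z-i\,\mathrm{Im}(z_n)$; a normal-families / compactness argument on the space of closed subsets of the strip produces a \emph{weak limit set} $W$, and one checks — using the quasi-periodicity \eqref{funct-equa} and the norm identity \eqref{normequality} — that $W$ is again sampling for $\mathcal{F}_{\infty,\nu}^{\alpha}(\mathbb{C}/\mathbb{Z})$ with the same constant, and that $D^{-}(W)\le D^{-}(Z)$ with the relevant inequalities on densities behaving the right way under weak limits. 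This reduces matters to showing that no \emph{uniformly discrete} set with $D^{-}\le\frac{\alpha}{\pi}$ can be sampling.

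The second, decisive step is a \emph{counting/comparison argument} contradicting sampling when the density is too small. Concretely: suppose $Z$ is separated and $D^{-}(Z)\le\frac{\alpha}{\pi}$. I would build, for a large parameter $r$, a test function $F_r\in\mathcal{F}_{\infty,\nu}^{\alpha}(\mathbb{C}/\mathbb{Z})$ that is large (comparable to its $\mathcal{F}_\infty$-norm) at the center of a strip $I_{w,r}$ but small on $Z\cap I_{w,r}$, forcing $M_\infty(Z,\alpha)\to\infty$. This is where the paper's announced new ingredient enters: the construction of an entire, $\mathbb{Z}$-quasi-periodic function vanishing exactly on a prescribed separated set inside the strip, with controlled growth $e^{\frac{\alpha}{2}|z|^2}$ — the analogue of the Weierstrass $\sigma$-function / theta-type product used in \cite{seip1}. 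Using such a function whose zero set is (a slight perturbation of) $Z\cap I_{w,r}$, together with a careful estimate of the growth in terms of the counting function $n(Z,I_{w,r})$ versus $\frac{\alpha}{\pi}r$, one gets the contradiction. The key quantitative input is that the "expected" number of zeros of a function of Fock-type growth in a strip of height $r$ is $\frac{\alpha}{\pi}r+o(r)$ — a Jensen-type formula for the strip — so a genuine deficit in $D^{-}(Z)$ leaves room to place an extra "bump."

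The third step is a short \emph{strictness} upgrade: the argument above actually shows that a sampling set must satisfy $D^{-}(Z)\ge\frac{\alpha}{\pi}$, and one must promote this to the strict inequality $D^{-}(Z^{\prime})>\frac{\alpha}{\pi}$ for some separated subsequence $Z^{\prime}\subseteq Z$. Here I would argue by contradiction assuming $D^{-}(Z)=\frac{\alpha}{\pi}$ exactly (for a limit set, by Step 1 one may assume $Z$ itself is separated and critical) and use a perturbation/removal argument: remove a sparse subsequence to drop the density strictly below $\frac{\alpha}{\pi}$, or invoke the already-proved non-sampling at the critical density via a limiting set that realizes equality and then fails the sampling inequality — exactly the mechanism by which the critical lattice fails to be sampling in the classical Fock space. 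The separation of $Z^{\prime}$ is free since $Z$ itself is assumed separated here.

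The main obstacle I anticipate is Step 2, specifically producing the quasi-periodic generating function with sharp growth on the strip and proving the Jensen-type zero-counting estimate adapted to $I_{w,r}$: the strip is invariant only under translations by $i\tfrac{\pi}{\alpha}\mathbb{Z}$, so the usual rotation-invariance tricks from the disc model of the Fock space are unavailable, and one must work directly with the vertical coordinate. I expect the authors circumvent Seip's use of orthogonality of localization-operator eigenfunctions on concentric discs (which they explicitly say has no counterpart here) precisely by this hands-on construction plus a direct growth estimate, rather than an operator-theoretic one.
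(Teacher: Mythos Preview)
Your overall architecture (reduce to a separated set, then construct test functions supported on deficient strips to contradict sampling) is right, but two of your three steps diverge from the paper in ways that matter.

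\textbf{Step 3 is a genuine gap.} Your proposed strictness upgrade does not work. Removing a sparse subsequence from a sampling set $Z$ with $D^{-}(Z)=\tfrac{\alpha}{\pi}$ gives a subset of \emph{smaller} density, but there is no reason that subset is still sampling, so you learn nothing; and ``showing the critical-density limit set fails sampling'' is exactly the statement you are trying to prove, so invoking it is circular. The paper (following Seip) obtains strictness by a completely different and very short mechanism: Lemma~\ref{lem2} shows that if $M_{\infty}(Z,\alpha)<\infty$ then $M_{\infty}(Z,\alpha+\epsilon)<\infty$ for small $\epsilon>0$. Hence it suffices to prove only the non-strict bound $D^{-}(Z)\ge \tfrac{\alpha}{\pi}$; applying this with $\alpha$ replaced by $\alpha+\epsilon$ immediately yields $D^{-}(Z)\ge \tfrac{\alpha+\epsilon}{\pi}>\tfrac{\alpha}{\pi}$. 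You should replace your Step~3 entirely by this $\alpha$-perturbation trick.

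\textbf{Step 2 uses a different mechanism than the paper.} You propose a Jensen-type zero-counting estimate on the strip together with the infinite quasi-periodic Weierstrass product announced in the introduction. That product (Lemma~\ref{estimatesg}) is in fact used only for the \emph{sufficiency} proofs. For necessity the paper builds, for each deficient strip $I_{w_n,r_n}$ containing $N$ points of $Z$, a \emph{finite} product
\[
G(z)=C\,e^{\frac{\alpha}{2}z^{2}+2i\pi\kappa_n z}\prod_{\xi\in Z_1}(e^{2i\pi z}-e^{2i\pi\xi})\prod_{\xi\in Z_2}(e^{-2i\pi z}-e^{-2i\pi\xi}),
\]
and the decisive observation is that $G$ lies in the span of finitely many of the explicit orthogonal basis functions $\varphi_k(z)=e^{\frac{\alpha}{2}z^{2}+2i\pi(k+\kappa_n)z}$, with index range $|k|\lesssim N/2$. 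Orthogonality of the $\varphi_k$ on the strip gives a lower bound $\|G\|_{\infty}\gtrsim r_n^{-1/2}$, while the Gaussian decay of $|\varphi_k(z)|e^{-\frac{\alpha}{2}|z|^2}$ for $|\mathrm{Im}\,z|$ beyond $\tfrac{\pi}{\alpha}N$ forces $\|G|Z\|_{\infty}$ to be exponentially small when $N<\tfrac{\alpha}{\pi(1+\epsilon)}r_n$. No Jensen formula is used; the role of ``expected number of zeros'' is played directly by the index range of the basis expansion. This is precisely the replacement for Seip's localization-operator eigenfunction argument that you correctly anticipated was missing.

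\textbf{Step 1 is harmless but unnecessary.} The paper does not pass to weak limits here; it simply invokes Lemma~\ref{sub-sam} (extract a separated sampling subsequence by the $\epsilon$-net construction) and runs the contradiction argument directly on that separated set. The weak-limit machinery (Lemma~\ref{samp-weak-uniqueness}) is used elsewhere, to transfer from $\mathcal{F}_{2}$ to $\mathcal{F}_{\infty}$ in the proof of Theorem~\ref{cct}.
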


For a given closed set $Q$ and $t>0$, let $Q(t)=\{z:\left\vert
z-Q\right\vert <t\}$. The Fr\'{e}chet distance between two closed sets $Q$
and $R$, denoted by $[Q,R]$, is the smallest number $t$ such that $Q\subset
R(t)$ and $R\subset Q(t)$.

\begin{definition}
A sequence of closed sets $Q_{j}$ is said to converge strongly to $Q$,
denoted by $Q_{j}\rightarrow Q$, if $[Q,Q_{j}]\rightarrow 0$. Such a
sequence converges weakly to $Q$ if for every compact set $K$, $\left(
Q_{j}\cap K\right) \rightarrow \left( Q\cap K\right)$.
\end{definition}

We have the following lemma:

\begin{lemma}
\label{limits}Let $Z_{n}$ be a sequence in $\mathbb{C}/\Gamma $ such that $%
\inf \delta (Z_{n})>0$. Then $Z_{n}\rightarrow Z$ implies 
\begin{equation*}
M_{2}(Z,\alpha )\leq \liminf M_{2}(Z_{n},\alpha )\text{.}
\end{equation*}
\end{lemma}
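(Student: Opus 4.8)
The plan is to adapt the standard lower-semicontinuity argument for sampling constants (as in Seip--Wallst\'en and Beurling) to the flat-cylinder setting. I would argue by contradiction: suppose $Z_n \to Z$ strongly with $\inf_n \delta(Z_n) > 0$, but $M_2(Z,\alpha) > \liminf M_2(Z_n,\alpha)$. Passing to a subsequence, we may assume $M_2(Z_n,\alpha) \to L < M_2(Z,\alpha)$. By definition of $M_2(Z,\alpha)$ as the smallest constant in $\|F\|_{\mathcal{F}_{2,\nu}^\alpha(\mathbb{C}/\mathbb{Z})} \le M \|F|Z\|_{2,\alpha}$, there exists a nonzero $F \in \mathcal{F}_{2,\nu}^\alpha(\mathbb{C}/\mathbb{Z})$ with $\|F\|_{\mathcal{F}_{2,\nu}^\alpha(\mathbb{C}/\mathbb{Z})}^2 > L^2 \|F|Z\|_{2,\alpha}^2$; normalize so that $\|F\|_{\mathcal{F}_{2,\nu}^\alpha(\mathbb{C}/\mathbb{Z})} = 1$. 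The goal is then to show $\|F|Z_n\|_{2,\alpha}^2 \to \|F|Z\|_{2,\alpha}^2$ (or at least $\limsup_n \|F|Z_n\|_{2,\alpha}^2 \le \|F|Z\|_{2,\alpha}^2 < L^{-2}$), which contradicts $\|F\|_{\mathcal{F}_{2,\nu}^\alpha(\mathbb{C}/\mathbb{Z})}^2 \le M_2(Z_n,\alpha)^2 \|F|Z_n\|_{2,\alpha}^2$ for large $n$ since $M_2(Z_n,\alpha) \to L$.

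The technical heart is the continuity of $F \mapsto \|F|Z_n\|_{2,\alpha}^2$ along strongly convergent sequences of uniformly separated sets. First I would record the key uniform estimate: for $F \in \mathcal{F}_{2,\nu}^\alpha(\mathbb{C}/\mathbb{Z})$ with unit norm, the quantity $|F(z)|^2 e^{-\alpha|z|^2}$ is dominated by the reproducing-kernel bound $|F(z)|^2 e^{-\alpha|z|^2} \le \|F\|^2 K_{\mathcal{F}_{2,\nu}^\alpha(\mathbb{C}/\mathbb{Z})}(z,z) e^{-\alpha|z|^2}$, and using the theta-function expression for the kernel one checks this is uniformly bounded on $\Lambda(\mathbb{Z})$ (the $e^{\frac{\alpha}{2}(z^2+\bar z^2)}$ factor combines with $e^{-\alpha|z|^2}$ to give decay, and the theta factor is bounded — this is exactly the boundedness used implicitly in the $\mathcal{F}_{\infty}$ norm). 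Combined with a sub-mean-value property on discs of radius $\delta/3$, a uniformly separated set contributes a tail $\sum_{|{\rm Im}(z)|>R, z\in Z_n} |F(z)|^2 e^{-\alpha|z|^2}$ controlled uniformly in $n$ by $\int_{|{\rm Im}(w)|>R-1} |F(w)|^2 e^{-\alpha|w|^2}\, dA(w)$, which is small since $F \in \mathcal{F}_{2,\nu}^\alpha(\mathbb{C}/\mathbb{Z})$. On the compact remaining part, strong convergence $Z_n \to Z$ plus $\inf_n\delta(Z_n)>0$ forces, for large $n$, a bijective matching of $Z_n \cap K$ with $Z \cap K$ with matched points converging, and continuity of $F$ then gives convergence of the finite sums. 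Letting $R \to \infty$ after $n \to \infty$ yields the desired convergence of the sampling sums.

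The main obstacle I anticipate is the geometric bookkeeping forced by the strip: the fundamental domain $\Lambda(\mathbb{Z}) = [0,1)\times\mathbb{R}$ is half-open, so a point of $Z_n$ near the boundary line ${\rm Re}(z)=1$ can "wrap around" to ${\rm Re}(z)=0$, and the Fr\'echet/strong convergence should really be understood on the quotient $\mathbb{C}/\mathbb{Z}$ rather than on the literal strip. I would handle this by working with the quotient metric from the start — identifying $Z_n, Z$ as subsets of $\mathbb{C}/\mathbb{Z}$ and using that the quasi-periodicity relation \eqref{funct-equa} makes $|F(z)|^2 e^{-\alpha|z|^2}$ a well-defined function on $\mathbb{C}/\mathbb{Z}$ — so that counting and matching arguments are intrinsic. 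A secondary subtlety is that $\mathbb{C}/\mathbb{Z}$ is only invariant under vertical translations by $\frac{\pi}{\alpha}\mathbb{Z}$ (as the authors note), but this is not actually needed here: the argument above uses only the uniform kernel bound and the integrability of $F$, both of which hold directly. Once these points are set up, the proof follows Seip's scheme essentially verbatim, and indeed the $\mathcal{F}_\infty$ analogue (needed via Theorem \ref{SampInf}) is proved the same way using $\|F|Z_n\|_{\infty,\alpha}$ in place of the $\ell^2$ sums. I would also note the companion statement for $M_\infty$ in the same breath, since the paper will need both.
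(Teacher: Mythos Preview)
Your proposal is correct and follows essentially the same approach as the paper's proof, which is a terse adaptation of Seip's Lemma~7.3: pick a near-extremal unit vector $F$ for the sampling inequality on $Z$, control the tail of $\|F|Z_n\|_{2,\alpha}$ uniformly in $n$ via the uniform separation, and use strong convergence on a compact piece to match the finite sums. Your write-up is in fact more careful than the paper's about the tail estimate (the sub-mean-value/kernel bound) and the wrap-around issue at $\mathrm{Re}(z)=1$, both of which the paper leaves implicit.
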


\begin{proof}
The proof is similar to the proof of Lemma 7.3 in \cite{seip1}. In fact, let 
$\varepsilon > 0$. There exists a unit vector $f$ in $\mathcal{F}_{2,\nu
}^{\alpha} \left( \mathbb{C}/\mathbb{Z}\right)$ such that 
\begin{equation*}
\|f|Z\|_{2,\alpha} \leq M^{-1} + \varepsilon,
\end{equation*}
where $M = M_2^-(Z, \alpha)$ (which may be infinite).

For $R>0$ sufficiently large, there exists $N\in\mathbb{N}$ such that for
any $n > N$ we have 
\begin{equation*}
\|f|Z_n \cap [0,\frac1R]\times[-\frac R2,\frac R2]\|_{2,\alpha} \leq M^{-1}
+ C\varepsilon.
\end{equation*}
Where $C$ is independent of $R$. Letting $R \to \infty$, we obtain 
\begin{equation*}
\|f|Z_n\|_{2,\alpha} \leq M^{-1} + C\varepsilon.
\end{equation*}
The result follows since $\varepsilon$ is arbitrary.
\end{proof}

Let $W(Z)$ be the set of all collections of weak limits of all translates $%
Z+z$ of $Z$ such that $\mathrm{Im}(z)\in \frac{\pi }{\alpha }\mathbb{Z}$.
Since the Weyl operators defined by 
\begin{equation}
T_{w}F(z):=e^{\alpha z\overline{w}-\frac{\alpha }{2}|w|^{2}}F(z-w),\quad 
\mathrm{Im}(w)\in \frac{\pi }{\alpha }\mathbb{Z},  \label{weyl}
\end{equation}%
are isometric on $\mathcal{F}_{2,\nu }^{\alpha }\left( \mathbb{C}/\mathbb{Z}%
\right) $, and act as translations on $\mathcal{F}_{2,\nu }^{\alpha }\left( 
\mathbb{C}/\mathbb{Z}\right) $, we conclude that 
\begin{equation*}
M_{p}(Z+z)=M_{p}(Z)\text{,}
\end{equation*}%
and thus%
\begin{equation*}
N_{p}(Z+z)=N_{p}(Z)\text{,}
\end{equation*}%
for all $z\in \mathbb{C}$ such that $\mathrm{Im}(z)\in \frac{\pi }{\alpha }%
\mathbb{Z}$, $p=2$ or $\infty$.

\begin{remark}
Note that a Weyl operator $T_{a}$, defined in \eqref{weyl}, preserves the
functional equation \eqref{funct-equa} only when $\mathrm{Im}(a)\in \frac{%
\pi }{\alpha }\mathbb{Z}.$
\end{remark}

If every $Z_{n}\in W(Z)$ is sampling for $\mathcal{F}_{2,\nu }^{\alpha
}\left( \mathbb{C}/\mathbb{Z}\right) $, by the above lemma, we get that $%
Z\in W(Z)$ is sampling as well.

\begin{lemma}
\label{samp-weak-uniqueness}A separated sequence $Z$ is sampling for $%
\mathcal{F}_{\infty ,\nu }^{\alpha }\left( \mathbb{C}/\mathbb{Z}\right) $ if
and only if every set in $W(Z)$ is a set of uniqueness for $\mathcal{F}%
_{\infty ,\nu }^{\alpha }\left( \mathbb{C}/\mathbb{Z}\right) $.
\end{lemma}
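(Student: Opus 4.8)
The plan is to prove both implications of the equivalence, following the Beurling--Seip scheme adapted to the Weyl-translation structure of the vertical strip.

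\textbf{From sampling to uniqueness of weak limits.} Suppose $Z$ is sampling for $\mathcal{F}_{\infty ,\nu }^{\alpha }\left( \mathbb{C}/\mathbb{Z}\right)$, so $M_{\infty }(Z,\alpha )<\infty$, and let $Z_{0}\in W(Z)$, i.e. $Z_{0}$ is a weak limit of translates $Z+w_{n}$ with $\mathrm{Im}(w_{n})\in \frac{\pi }{\alpha }\mathbb{Z}$. Since the Weyl operators \eqref{weyl} are isometries preserving the functional equation for these $w_{n}$, each $Z+w_{n}$ has the same sampling constant $M_{\infty }(Z,\alpha )$. The key point is an $\mathcal{F}_{\infty}$-analogue of Lemma \ref{limits}: weak convergence together with uniform separation (which holds since $Z$ is separated, hence all translates are separated with the same constant) implies $M_{\infty }(Z_{0},\alpha )\leq \liminf M_{\infty }(Z+w_{n},\alpha )=M_{\infty }(Z,\alpha )<\infty$. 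Here one uses a normal-families argument: pick $F_{n}\in \mathcal{F}_{\infty ,\nu }^{\alpha }$ of unit norm nearly realizing the norm at a fixed point, extract a locally uniformly convergent subsequence (the uniform growth bound $|F_{n}(z)|\leq e^{\frac{\alpha }{2}|z|^{2}}$ gives a normal family on $\mathbb{C}$), and transfer the sampling inequality to the limit using weak convergence of $Z+w_{n}$ to $Z_{0}$ on compacta. Thus $Z_{0}$ is sampling for $\mathcal{F}_{\infty ,\nu }^{\alpha }$, and in particular a set of uniqueness (if $F$ vanishes on $Z_{0}$, then $\Vert F\Vert \leq M_{\infty }(Z_{0},\alpha )\Vert F|Z_{0}\Vert _{\infty ,\alpha }=0$).

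\textbf{From uniqueness of weak limits to sampling.} This is the substantial direction and is argued by contraposition. Suppose $Z$ is separated but \emph{not} sampling for $\mathcal{F}_{\infty ,\nu }^{\alpha }\left( \mathbb{C}/\mathbb{Z}\right)$. Then there is a sequence of unit-norm functions $F_{n}\in \mathcal{F}_{\infty ,\nu }^{\alpha }$ with $\Vert F_{n}|Z\Vert _{\infty ,\alpha }\to 0$. For each $n$ choose a point $w_{n}\in \mathbb{C}$ at which $F_{n}$ nearly attains its norm, i.e. $|F_{n}(w_{n})|e^{-\frac{\alpha}{2}|w_{n}|^{2}}\geq \tfrac12$; by quasi-periodicity \eqref{funct-equa} we may take $\mathrm{Re}(w_{n})\in [0,1)$, and after subtracting an integer multiple of $\frac{\pi}{\alpha}i$ we may adjust the imaginary part so that $w_{n}=a_{n}+ib_{n}$ with $b_{n}$ lying in a fixed interval of length $\frac{\pi}{\alpha}$ (this is the only translation freedom respecting both the strip and the functional equation, cf. the Remark after \eqref{weyl}). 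Now translate: set $G_{n}:=T_{v_{n}}F_{n}$ where $v_{n}:=i\,(b_{n}^{*}-b_{n})$ with $b_{n}^{*}$ the appropriate point of $\frac{\pi}{\alpha}\mathbb{Z}$ making $\mathrm{Im}(v_{n})\in \frac{\pi}{\alpha}\mathbb{Z}$; after this the near-maximum points of $G_{n}$ stay in a fixed compact region of the strip. Pass to a subsequence so that $Z+v_{n}$ converges weakly to some $Z_{0}\in W(Z)$ and, by the normal-families/Montel argument applied to the uniformly bounded family $G_{n}e^{-\frac{\alpha}{2}|z|^{2}}$ (bounded since $\|G_n\|=\|F_n\|=1$), $G_{n}\to G$ locally uniformly with $G\in \mathcal{F}_{\infty ,\nu }^{\alpha }$. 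The limit $G$ is nonzero because $|G(z_{*})|e^{-\frac{\alpha}{2}|z_{*}|^{2}}\geq \tfrac12$ at the limiting near-maximum point $z_{*}$, yet $G$ vanishes on $Z_{0}$: indeed for any fixed $z_{0}\in Z_{0}$ there are $\zeta_{n}\in Z+v_{n}$ with $\zeta_{n}\to z_{0}$, and $|G_{n}(\zeta_{n})|e^{-\frac{\alpha}{2}|\zeta_{n}|^{2}}=|F_{n}(\zeta_{n}-v_{n})|e^{-\frac{\alpha}{2}|\zeta_{n}-v_{n}|^{2}}\leq \Vert F_{n}|Z\Vert _{\infty ,\alpha }\to 0$, so $G(z_{0})=0$. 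Hence $Z_{0}\in W(Z)$ is \emph{not} a set of uniqueness, completing the contrapositive.

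\textbf{Main obstacle.} The technical heart is the compactness/transfer argument in the second direction: one must (i) control the location of the near-maximal points so that, after an \emph{admissible} Weyl translation (imaginary part in $\frac{\pi}{\alpha}\mathbb{Z}$ — the strip is not translation-invariant in general), these points cluster in a fixed compact set, and (ii) ensure the weak limit of the translated sampling set and the locally uniform limit of the functions are compatible, so that vanishing transfers to the limit. The uniform separation of $Z$ (hence of all its admissible translates) is what keeps $W(Z)$ a nonempty family of separated sets and lets the Montel argument run; the extra bookkeeping compared to the classical Fock case \cite{seip1} is precisely the restriction of allowed translations to those preserving \eqref{funct-equa}. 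Once $G\in \mathcal{F}_{\infty ,\nu }^{\alpha }$, nonzero, vanishing on $Z_{0}$ is produced, the conclusion is immediate; conversely the Lemma \ref{limits}-type semicontinuity in the $\mathcal{F}_{\infty}$ norm — which is routine given the normal family bound — closes the easy direction.
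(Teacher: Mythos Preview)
Your proposal is correct and follows essentially the same route as the paper: for the nontrivial direction the paper also argues by contraposition, picks near-maximum points $\xi_n=x_n+iy_n$ of unit-norm $F_n$ with $\|F_n|Z\|_{\infty,\alpha}\to 0$, applies the admissible Weyl translation $T_{-w_n}$ with $w_n=x_n+i\tfrac{\pi}{\alpha}\lfloor\tfrac{\alpha}{\pi}y_n\rfloor$ so that the translated near-maxima $\kappa_n$ lie in the fixed segment $i[0,\tfrac{\pi}{\alpha}]$, and then uses Montel's theorem to extract a nonzero limit $G\in\mathcal{F}_{\infty,\nu}^{\alpha}$ vanishing on the weak limit $Z_0\in W(Z)$. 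Your description of the translation is slightly garbled (as written, $\mathrm{Im}(v_n)=b_n^{*}-b_n$ is \emph{not} in $\tfrac{\pi}{\alpha}\mathbb{Z}$; the correct choice is $v_n=i\tfrac{\pi}{\alpha}\lfloor\tfrac{\alpha}{\pi}b_n\rfloor$, so that the \emph{residual} imaginary part, not $v_n$ itself, lands in $[0,\tfrac{\pi}{\alpha})$), but the intended step is clear and matches the paper; note also that the paper only writes out the uniqueness-to-sampling implication, while you additionally sketch the converse via $M_\infty$-semicontinuity.
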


\begin{proof}
Assume there exists a sequence $(F_{n})$ of unit vectors in $\mathcal{F}%
_{\infty ,\nu }^{\alpha }\left( \mathbb{C}/\mathbb{Z}\right) $ such that $%
\Vert F_{n}|Z\Vert \rightarrow 0$ as $n\rightarrow +\infty $. Then by
continuity we can find some sequence in $(\xi_{n})\in \Lambda(\mathbb{Z}) $
such that 
\begin{equation*}
F_{n}(\xi_{n})e^{-\frac{\alpha }{2}|\xi_{n}|^{2}}=\frac{1}{2}\text{.}
\end{equation*}%
Next, for $\xi_{n}=x_{n}+iy_{n}$ let $w_{n}=x_{n}+i\frac{\alpha }{\pi }%
\lfloor \frac{\alpha }{\pi }y_{n}\rfloor $, we consider the function 
\begin{equation*}
G_{n}(z)=e^{-\alpha z\overline{w_{n}}-\frac{\alpha }{2}%
|w_{n}|^{2}}F_{n}(z+w_{n})\text{.}
\end{equation*}%
Note that $G_{n}$ satisfies the functional equation \eqref{funct-equa}.
Furthermore, 
\begin{equation*}
\Vert G_{n}\Vert _{\mathcal{F}_{\infty ,\nu }^{\alpha }\left( \mathbb{C}/%
\mathbb{Z}\right) }=\Vert F_{n}\Vert _{\mathcal{F}_{\infty ,\nu }^{\alpha
}\left( \mathbb{C}/\mathbb{Z}\right) }=1\text{.}
\end{equation*}%
Let $\kappa _{n}=i\frac{\pi }{\alpha }(\frac{\alpha }{\pi }y_{n}-\lfloor 
\frac{\alpha }{\pi }y_{n}\rfloor )$. Then, 
\begin{equation*}
|G_{n}(\kappa _{n})|= |F(\xi_{n})|e^{-\frac{\alpha }{2}|\xi_{n}|^{2}+\frac{%
\alpha }{2}|\kappa_{n}|^{2}}\in \lbrack \frac{1}{2},\frac{e^{\frac{\pi ^{2}}{%
2\alpha }}}{2}]\text{.}
\end{equation*}%
Then by considering a subsequence (if necessarily), we can assume that $%
|G_{n}(\kappa _{n})|$ converges to some $b\in \lbrack \frac{1}{2},\frac{e^{%
\frac{\pi ^{2}}{2\alpha }}}{2}]$. Since $(\kappa _{n})$ is bounded, then we
may also assume that it converge to some $\kappa $ in the segment $[0,i]$.
Finally, by Montel's theorem we can find a subsequence of $(G_{n})$
converging uniformly to some holomorphic function $G $ on every compact
subset of $\mathbb{C}$ satisfying $G\in \mathcal{F}_{\infty ,\nu }^{\alpha
}\left( \mathbb{C}/\mathbb{Z}\right) $ and $G(\kappa )\neq 0$.

Now, let $Z_{n}=Z+w_{n}$, take $Z_{0}$ as the weak limit of $Z_{n}$. If $%
Z_{0}$ is empty then it is not a set of uniqueness for $\mathcal{F}_{\infty
,\nu }^{\alpha }\left( \mathbb{C}/\mathbb{Z}\right) $. If $Z_{0}$ is not
empty, we take $a\in Z_{0}$. Then, for every positive integer $k\geq 1$,
there exists $\zeta _{k}\in Z_{n_k}$ such that $\left\vert a-\zeta
_{k}\right\vert \leq \frac{1}{k}$. By \cite[Lemma 3.1]{seip1}, 
\begin{equation*}
\left\vert e^{-\frac{\alpha }{2}|a|^{2}}\left\vert G_{n_{k}}(a)\right\vert
-e^{-\frac{\alpha }{2}|\zeta _{k}|^{2}}|G_{n_{k}}(\zeta_{k})|\right\vert
\leq c\left\vert a-\xi _{k}\right\vert \leq \frac{1}{k}\rightarrow 0\text{.}
\end{equation*}%
But $\Vert G_{n}|Z_{n}\Vert _{\infty ,\alpha }=\Vert F_{n}|Z\Vert _{\infty
,\alpha }\rightarrow 0$, thus $e^{-\frac{\alpha }{2}|\zeta
_{k}|^{2}}|G_{n_{k}}(\zeta _{k})|\rightarrow 0$, which implies that $G(a)=0$%
. Thus, $G$ vanishes on $Z_{0}$. Since $G\neq 0$ ($G(\kappa)\neq0$) we
obtain that $Z_{0}$ is not a set of uniqueness. This is a contradiction,
since $Z_{n}=Z+w_{n}$ is a set of sampling.
\end{proof}

\begin{lemma}
\label{sub-sam} If $Z$ is sampling for $\mathcal{F}_{\infty ,\nu }^{\alpha
}\left( \mathbb{C}/\mathbb{Z}\right) $, then $Z$ contains a separated
subsequence which is also sampling for $\mathcal{F}_{\infty ,\nu }^{\alpha
}\left( \mathbb{C}/\mathbb{Z}\right) $.
\end{lemma}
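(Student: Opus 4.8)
The plan is to extract a maximal separated subsequence of $Z$ and show it is still sampling for $\mathcal{F}_{\infty ,\nu }^{\alpha }\left( \mathbb{C}/\mathbb{Z}\right) $; the main work is to guarantee that the separation constant can be chosen uniformly so that no mass is lost in the sampling inequality. First I would invoke the upper bound hypothesis built into the notion of sampling: a sampling sequence for $\mathcal{F}_{\infty ,\nu }^{\alpha }\left( \mathbb{C}/\mathbb{Z}\right) $ is automatically a finite union of separated sets (the argument of Lemma~7.1 in \cite{seip1} adapts verbatim, since the relevant estimates are local and use only \cite[Lemma 3.1]{seip1}). Write $Z=Z_{1}\cup\dots\cup Z_{m}$ with each $Z_{j}$ separated with separation constant at least some $\delta_{0}>0$.

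Next I would argue by contradiction using the weak-limit machinery already set up. Suppose no separated subsequence of $Z$ is sampling. Then in particular, for the maximal $\delta$-separated subset $Z^{\delta}$ of $Z$ (pick greedily, so every point of $Z$ lies within $\delta$ of some point of $Z^{\delta}$), one has $M_{\infty}(Z^{\delta},\alpha)=\infty$ for every $\delta\le\delta_{0}$. By Lemma~\ref{samp-weak-uniqueness} applied to $Z^{\delta}$ — which is legitimate since $Z^{\delta}$ is separated — there is a set $Y_{\delta}\in W(Z^{\delta})$ that is not a set of uniqueness for $\mathcal{F}_{\infty ,\nu }^{\alpha }\left( \mathbb{C}/\mathbb{Z}\right) $: there is a nonzero $G_{\delta}\in\mathcal{F}_{\infty ,\nu }^{\alpha }\left( \mathbb{C}/\mathbb{Z}\right) $ with $\|G_{\delta}\|_{\mathcal{F}_{\infty ,\nu }^{\alpha }\left( \mathbb{C}/\mathbb{Z}\right) }=1$ vanishing on $Y_{\delta}$. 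Now let $\delta\to 0$ along a sequence; the weak limits $Y_{\delta}$ are $\delta_0/2$-separated (a weak limit of a $\delta$-separated set is $\delta$-separated), and after passing to a subsequence the $G_{\delta}$ converge locally uniformly (Montel, using the growth bound $|G_{\delta}(z)|\le e^{\frac{\alpha}{2}|z|^{2}}$) to some $G\in\mathcal{F}_{\infty ,\nu }^{\alpha }\left( \mathbb{C}/\mathbb{Z}\right) $. One must check $G\not\equiv 0$: normalize each $G_{\delta}$ so that its sup-norm is attained, after a Weyl translation (as in the proof of Lemma~\ref{samp-weak-uniqueness}), near the segment $[0,i]$ with value bounded below by a fixed positive constant; then $G(\kappa)\ne 0$ for the limit point $\kappa$ of those translates. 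Simultaneously, the translated copies of $Z^{\delta}$ have weak limits that — because $Z^{\delta}$ was a $\delta$-net in $Z$ and $\delta\to 0$ — fill out a weak limit $Y\in W(Z)$ of (translates of) the full sequence $Z$; and $G$ vanishes on $Y$ by the same estimate $\bigl|e^{-\frac{\alpha}{2}|a|^{2}}|G_{n_k}(a)|-e^{-\frac{\alpha}{2}|\zeta_k|^{2}}|G_{n_k}(\zeta_k)|\bigr|\le c|a-\zeta_k|$ used in Lemma~\ref{samp-weak-uniqueness}, together with $\|G_{\delta}|Y_{\delta}\|_{\infty,\alpha}=0$. Hence $Y\in W(Z)$ is not a set of uniqueness, so by Lemma~\ref{samp-weak-uniqueness} (now applied to $Z$, using that $Z$ is a finite union of separated sets so the lemma's proof still goes through) $Z$ is not sampling for $\mathcal{F}_{\infty ,\nu }^{\alpha }\left( \mathbb{C}/\mathbb{Z}\right) $, a contradiction.

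The hard part is the interchange of the two limiting processes — the weak limit defining $W(Z^{\delta})$ and the limit $\delta\to 0$ — while keeping the extremal functions $G_{\delta}$ from degenerating to zero. The cleanest way to organize this, which I would adopt, is to diagonalize: for each $\delta=1/n$ choose a Weyl-translated unit extremal function $G_{n}$ for $Z^{1/n}$ and the corresponding translated net $Z^{1/n}+w_{n}$, pass once to a subsequence for which both $G_{n}\to G$ locally uniformly and $Z^{1/n}+w_{n}\to Y$ weakly, and then verify the two conclusions ($G\ne 0$ and $G|_{Y}=0$) exactly as in Lemma~\ref{samp-weak-uniqueness}. The geometric subtlety peculiar to the strip — that translations are admissible only when $\mathrm{Im}(z)\in\frac{\pi}{\alpha}\mathbb{Z}$ — is handled precisely as before, by splitting each translation into an integer multiple of $i\frac{\pi}{\alpha}$ plus a bounded remainder in $[0,i\frac{\pi}{\alpha})$ and tracking the resulting bounded weight factors $e^{\frac{\alpha}{2}|\kappa_n|^2}\in[1,e^{\frac{\pi^2}{2\alpha}}]$.
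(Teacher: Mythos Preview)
Your proposal is substantially more complicated than the paper's argument, and it contains a genuine error in the first step. The paper simply follows Seip's Lemma~4.1: enumerate $Z$, greedily extract an $\epsilon$-separated subset $Z'$ (so every point of $Z$ lies within $\epsilon$ of some point of $Z'$), and use the local Lipschitz estimate $\bigl|\,|F(z)|e^{-\frac{\alpha}{2}|z|^{2}}-|F(w)|e^{-\frac{\alpha}{2}|w|^{2}}\bigr|\le c|z-w|\,\|F\|_{\mathcal{F}_{\infty,\nu}^{\alpha}}$ to get $\|F|Z\|_{\infty,\alpha}\le \|F|Z'\|_{\infty,\alpha}+c\epsilon\|F\|_{\mathcal{F}_{\infty,\nu}^{\alpha}}$. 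Combined with $\|F\|_{\mathcal{F}_{\infty,\nu}^{\alpha}}\le M_{\infty}(Z,\alpha)\|F|Z\|_{\infty,\alpha}$, this yields $(1-c\epsilon M_{\infty}(Z,\alpha))\|F\|_{\mathcal{F}_{\infty,\nu}^{\alpha}}\le M_{\infty}(Z,\alpha)\|F|Z'\|_{\infty,\alpha}$, so $Z'$ is sampling once $\epsilon<1/(cM_{\infty}(Z,\alpha))$. No weak limits, no Montel, no diagonalization.

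Your opening claim that a sampling set for $\mathcal{F}_{\infty,\nu}^{\alpha}(\mathbb{C}/\mathbb{Z})$ is automatically a finite union of separated sets is false: the $\infty$-sampling condition is only $M_{\infty}(Z,\alpha)<\infty$, with no upper inequality, and adding arbitrarily clustered points to a sampling set keeps it sampling since $\|F|Z\|_{\infty,\alpha}$ can only increase. Lemma~7.1 of \cite{seip1} is a statement about the $p<\infty$ upper bound and does not transfer. Fortunately you never really use this claim, but it should be deleted. More importantly, notice that the one analytic ingredient you actually rely on in your diagonalization---that $\|G_{n}|Z+w_{n}\|_{\infty,\alpha}$ is small because $Z^{1/n}$ is a $1/n$-net in $Z$ and the Lipschitz estimate applies---is precisely the two-line computation above. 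Once you isolate that step, the entire apparatus of $W(Z^{\delta})$, Lemma~\ref{samp-weak-uniqueness}, extremal functions $G_{\delta}$, and the limit $\delta\to 0$ becomes superfluous: from ``$Z^{\epsilon}$ not sampling'' you get a near-extremal $F$ with $\|F|Z^{\epsilon}\|_{\infty,\alpha}$ small, hence $\|F|Z\|_{\infty,\alpha}$ small, hence $Z$ not sampling---the contrapositive of the direct argument. I would replace the whole proposal by the greedy construction plus the Lipschitz bound.
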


\begin{proof}
The proof follows the same approach as in Lemma 4.1 of \cite{seip1}. The
idea is to choose an enumeration of $Z = \{z_1, z_2, \dots\}$ and fix $%
\epsilon > 0$ small (depending on the sampling constant). Set $z_1^{\prime
}= z_1$ and remove all terms from $Z$ within a distance of less than $%
\epsilon$ from $z_1^{\prime }$. Denote the remaining terms by $Z_1 =
\{z_{11}, z_{12}, \dots\}$, preserving the same order. Next, set $%
z_2^{\prime }= z_{11}$ and remove all terms from $Z_1$ within a distance of
less than $\epsilon$ from $z_2^{\prime }$. Denote the remaining terms by $%
Z_2 = \{z_{21}, z_{22}, \dots\}$, preserving the same order. Continue this
process indefinitely if necessary, obtaining a subsequence $Z^{\prime }=
\{z_n^{\prime }\}$ that is $\epsilon$-separated and sampling for a
well-chosen $\epsilon$.
\end{proof}

\begin{lemma}
\label{lemma2} \label{lem2} If $M_{\infty }(Z,\alpha )<\infty $, then $%
M_{\infty }(Z,\alpha +\epsilon )<\infty$ for $\epsilon >0$ sufficiently
small.
\end{lemma}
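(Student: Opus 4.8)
The plan is to show that the sampling inequality for $\mathcal{F}_{\infty,\nu}^{\alpha}\left(\mathbb{C}/\mathbb{Z}\right)$ is, in a suitable sense, stable under a small increase of the weight parameter $\alpha$. The key structural observation is the map $F\mapsto \widetilde{F}$ with $\widetilde{F}(z) := F(z)e^{\epsilon z^2/2}$, which sends a function satisfying the functional equation \eqref{funct-equa} for the parameter $\alpha$ to one satisfying it for $\alpha+\epsilon$, since $e^{\epsilon(z+k)^2/2} = e^{\epsilon z^2/2}e^{\frac{\epsilon}{2}k^2+\epsilon zk}$ exactly absorbs the extra factor. Thus $F\in\mathcal{F}_{\infty,\nu}^{\alpha}$ if and only if $\widetilde{F}\in\mathcal{F}_{\infty,\nu}^{\alpha+\epsilon}$, with the correspondence $|\widetilde{F}(z)|e^{-\frac{\alpha+\epsilon}{2}|z|^2} = |F(z)|e^{-\frac{\alpha}{2}|z|^2}\cdot e^{\frac{\epsilon}{2}(\mathrm{Re}(z^2)-|z|^2)} = |F(z)|e^{-\frac{\alpha}{2}|z|^2}\cdot e^{-\epsilon (\mathrm{Im}\, z)^2}$. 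The multiplier $e^{-\epsilon(\mathrm{Im}\,z)^2}$ is at most $1$ everywhere, so $\|\widetilde{F}\|_{\mathcal{F}_{\infty,\nu}^{\alpha+\epsilon}} \le \|F\|_{\mathcal{F}_{\infty,\nu}^{\alpha}}$; the difficulty is that this multiplier is not bounded below, so one cannot directly transfer the lower bound along the identity $Z$.

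To get around this I would argue via the Weyl-translation invariance already recorded in the excerpt together with a covering argument on a horizontal period. First I would reduce to a single strip in the imaginary direction: since $T_w$ with $\mathrm{Im}(w)\in\frac{\pi}{\alpha}\mathbb{Z}$ acts isometrically and as a translation on $\mathcal{F}_{\infty,\nu}^{\alpha}$, one can translate so that, for a given unit-norm $G\in\mathcal{F}_{\infty,\nu}^{\alpha+\epsilon}$, the point where $|G(z)|e^{-\frac{\alpha+\epsilon}{2}|z|^2}$ is nearly maximal has imaginary part lying in a bounded interval, say $[0,\tfrac{\pi}{\alpha}]$. Pulling $G$ back to $F$ via $F(z)=G(z)e^{-\epsilon z^2/2}$ and using that $\sup_z|F(z)|e^{-\frac{\alpha}{2}|z|^2}$ is essentially attained in the same bounded horizontal band (because the multiplier $e^{\epsilon(\mathrm{Im}\,z)^2}$ only helps off the band, but the functional equation forces control there too via e.g. \cite[Lemma 3.1]{seip1}-type estimates), one gets $\|F\|_{\mathcal{F}_{\infty,\nu}^{\alpha}} \le e^{\epsilon(\pi/\alpha)^2}\|G\|_{\mathcal{F}_{\infty,\nu}^{\alpha+\epsilon}} + o(1)$ up to controlled constants. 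Then the hypothesis $M_\infty(Z,\alpha)<\infty$ gives $\|F\|_{\mathcal{F}_{\infty,\nu}^{\alpha}} \le M_\infty(Z,\alpha)\|F|Z\|_{\infty,\alpha}$, and one translates the right-hand side back: $|F(z)|e^{-\frac{\alpha}{2}|z|^2} = |G(z)|e^{-\frac{\alpha+\epsilon}{2}|z|^2}e^{\epsilon(\mathrm{Im}\,z)^2}$, which for $z\in Z$ restricted to the relevant band is at most $e^{\epsilon(\pi/\alpha)^2}|G(z)|e^{-\frac{\alpha+\epsilon}{2}|z|^2}$, i.e. $\le e^{\epsilon(\pi/\alpha)^2}\|G|Z\|_{\infty,\alpha+\epsilon}$.

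A cleaner route, which I would actually prefer to write up, avoids the band reduction entirely by exploiting the normalization freedom directly: given $\varepsilon_0>0$ and unit $G\in\mathcal{F}_{\infty,\nu}^{\alpha+\epsilon}$, pick $z_0$ with $|G(z_0)|e^{-\frac{\alpha+\epsilon}{2}|z_0|^2}>1-\varepsilon_0$, use a Weyl translate $T_w$, $\mathrm{Im}(w)\in\frac{\pi}{\alpha}\mathbb{Z}$, to move $z_0$ into $[0,1)\times[0,\tfrac{\pi}{\alpha})$ — call the translated function $G_0$, still unit norm in $\mathcal{F}_{\infty,\nu}^{\alpha+\epsilon}$ and with a near-maximal point $\zeta_0$ in that fixed compact rectangle. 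Set $F_0(z)=G_0(z)e^{-\epsilon z^2/2}\in\mathcal{F}_{\infty,\nu}^{\alpha}$. Since the sampling inequality for $\alpha$ is translation-invariant ($M_\infty$ unchanged under the same Weyl translates), apply it to $F_0$: $\|F_0\|_{\mathcal{F}_{\infty,\nu}^{\alpha}}\le M_\infty(Z,\alpha)\sup_{z\in Z}|F_0(z)|e^{-\frac{\alpha}{2}|z|^2}$. The left side is $\ge |F_0(\zeta_0)|e^{-\frac{\alpha}{2}|\zeta_0|^2} = |G_0(\zeta_0)|e^{-\frac{\alpha+\epsilon}{2}|\zeta_0|^2}e^{\epsilon(\mathrm{Im}\,\zeta_0)^2}\ge (1-\varepsilon_0)$. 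For the right side, the multiplier $e^{\epsilon(\mathrm{Im}\,z)^2}$ is unbounded, which is precisely the main obstacle; one handles it by replacing the pure Gaussian-in-$z$ multiplier with a renormalized version that decays in $|\mathrm{Im}\,z|$ — concretely, split $Z$ according to $|\mathrm{Im}(z-\zeta_0)|\le R$ or $>R$, bound the near part by $e^{\epsilon R^2}\|G_0|Z\|_{\infty,\alpha+\epsilon}$ and kill the far part using the a priori growth $|F_0(z)|e^{-\frac{\alpha}{2}|z|^2}\le \|F_0\|_{\infty}e^{-\epsilon(\mathrm{Im}\,z)^2}\le e^{-\epsilon R^2}$, choosing $R$ so that $e^{-\epsilon R^2}M_\infty(Z,\alpha)<\tfrac12$. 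This yields $1-\varepsilon_0\le \tfrac12 + e^{\epsilon R^2}M_\infty(Z,\alpha)\|G_0|Z\|_{\infty,\alpha+\epsilon}$, and since $\|G_0|Z\|_{\infty,\alpha+\epsilon}\le\|G|Z\|_{\infty,\alpha+\epsilon}$ (Weyl translates again) we conclude $M_\infty(Z,\alpha+\epsilon)\le 4e^{\epsilon R^2}M_\infty(Z,\alpha)<\infty$ once $\varepsilon_0<\tfrac14$. The only real work is the splitting estimate and checking that $R$ can be chosen independently of $G$, which it can since it depends only on $\epsilon$ and $M_\infty(Z,\alpha)$; this is exactly the kind of perturbation argument that appears in Seip's treatment, so I expect the details to be routine once the multiplier trick $F\mapsto Fe^{\epsilon z^2/2}$ is in place.
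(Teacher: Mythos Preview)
Your multiplier observation is correct: $F\mapsto Fe^{\epsilon z^2/2}$ does carry the $\alpha$-functional equation to the $(\alpha+\epsilon)$-functional equation, and the weighted-modulus identity you wrote in the first paragraph is right. The gap is that you then use this identity with the sign reversed. For $F_0(z)=G_0(z)e^{-\epsilon z^2/2}$ one has
\[
|F_0(z)|\,e^{-\frac{\alpha}{2}|z|^2}
=|G_0(z)|\,e^{-\frac{\alpha+\epsilon}{2}|z|^2}\cdot e^{+\epsilon(\mathrm{Im}\,z)^2},
\]
not $e^{-\epsilon(\mathrm{Im}\,z)^2}$. Hence for a generic unit vector $G_0\in\mathcal{F}_{\infty,\nu}^{\alpha+\epsilon}$ the function $F_0$ is \emph{not} in $\mathcal{F}_{\infty,\nu}^{\alpha}$ at all, so the hypothesis $M_\infty(Z,\alpha)<\infty$ cannot be applied to it; and on the ``far part'' $|\mathrm{Im}(z)|>R$ the quantity $|F_0(z)|e^{-\frac{\alpha}{2}|z|^2}$ is bounded above only by $e^{\epsilon(\mathrm{Im}\,z)^2}$, which blows up rather than decays. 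Both legs of your splitting argument therefore collapse. There is also a secondary issue: the Weyl operators $T_w$ with $\mathrm{Im}(w)\in\frac{\pi}{\alpha}\mathbb{Z}$ preserve $\mathcal{F}_{\infty,\nu}^{\alpha}$, not $\mathcal{F}_{\infty,\nu}^{\alpha+\epsilon}$, so you cannot use them to normalize $G$ while simultaneously claiming invariance of $M_\infty(Z,\alpha)$.

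The paper does not attempt a direct multiplier transfer; it follows the compactness route of Lemma~\ref{samp-weak-uniqueness} (the cylinder analogue of \cite[Lemma~4.4]{seip1}). One assumes failure along a sequence $\epsilon_n\downarrow 0$, picks unit vectors $F_n\in\mathcal{F}_{\infty,\nu}^{\alpha+\epsilon_n}$ with $\Vert F_n|Z\Vert_{\infty,\alpha+\epsilon_n}\to 0$, translates by $w_n$ with $\mathrm{Im}(w_n)\in\frac{\pi}{\alpha+\epsilon_n}\mathbb{Z}$ so that a near-maximal point lands in a fixed compact band, and passes to a Montel limit. Because $\epsilon_n\to 0$, the limit function lies in $\mathcal{F}_{\infty,\nu}^{\alpha}$, is nonzero, and vanishes on a weak limit of translates of $Z$, contradicting Lemma~\ref{samp-weak-uniqueness}. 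The point is that the passage from $\alpha+\epsilon$ back to $\alpha$ is achieved by a \emph{limit} ($\epsilon_n\to 0$), not by a multiplier; your approach tries to do it for a fixed $\epsilon$, and the sign of the Gaussian factor makes that impossible.
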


\begin{proof}
Comparing with the planar setting in Lemma 4.4 of \cite{seip1}, the key
difference here is the loss of translation invariance, since the Weyl
operators $T_a$ do not preserve the functional equation for arbitrary $a \in 
\mathbb{C}$. However, by adapting the proof strategy of Lemma \ref%
{samp-weak-uniqueness}, we obtain the result.
\end{proof}

\begin{proof}[Proof of Theorem \protect\ref{SampInf}]
Without lost of generality, we may assume $\nu =0$. To simplify notations we
denote $\mathcal{F}_{\infty ,\kappa }^{\alpha }\left( \mathbb{C}/\mathbb{Z}%
\right) $ by $\mathcal{F}_{\infty,\kappa }^{\alpha } $ and $\mathcal{F}%
_{2,\kappa }^{\alpha }\left( \mathbb{C}/\mathbb{Z}\right) $ by $\mathcal{F}%
_{2,\kappa}^{\alpha } $. By Lemma \ref{sub-sam}, we may assume that $Z$ is
already separated. By Lemma \ref{lem2}, we have only to show that $%
D^{-}(Z)\geq \frac{\alpha }{\pi }$. Let us assume the contrary, that is 
\begin{equation*}
D^{-}(Z)=\frac{\alpha }{\pi (1+2\epsilon )},\quad \epsilon >0\text{.}
\end{equation*}%
Then, we can find a positive sequence $(r_{n})_{n}$, $r_{n}\rightarrow
+\infty $, and a sequence $(w_{n})_{n}$ in the fundamental domain $\Lambda
(\Gamma )=[0,1)\times\mathbb{R}$ such that 
\begin{equation*}
\frac{n(Z,I_{w_{n},r_{n}})}{r_{n}}<\frac{\alpha }{\pi (1+\epsilon )}\text{.}
\end{equation*}%
Using the notation of definition (\ref{Idisc}), consider $I_{w_{n},r_{n}}$
and $I_{n}=I_{0,r_{n}}$ and set $N:=N(Z,I_{w_{n},r_{n}})=N(Z,I_{n}+\mathrm{Im%
}(w_{n}))$. Then 
\begin{equation*}
\frac{\alpha r_{n}}{\pi (1+2\epsilon )}\leq N<\frac{\alpha r_{n}}{\pi
(1+\epsilon )}\text{.}
\end{equation*}%
Let $\xi _{1},\cdots ,\xi _{N}$ be all the elements of $Z-\mathrm{Im}(w_{n})$
in $I_{n}$. $Z_{1}$ is defined as the set of the $\lfloor \frac{N}{2}\rfloor 
$ elements $\xi _{i}$ with the largest imaginary parts, while $Z_{2}$ is
defined as $Z_{2}=(Z-\mathrm{Im}(w_{n}))\setminus Z_{1}$. Set 
\begin{equation*}
\nu =\kappa _{n}=\frac{\alpha }{\pi }\mathrm{Im}(w_{n})-\lfloor \frac{\alpha 
}{\pi }\mathrm{Im}(w_{n})\rfloor \in \lbrack 0,1]\text{.}
\end{equation*}%
We are now in a position of defining, in explicit form, the following
function $g\in \mathcal{F}_{\infty,\kappa_n }^{\alpha }$ whose zeros are
exactly $\{\xi _{k}\}_{k=1,\cdots ,N}$: 
\begin{equation*}
g(z):=Ce^{\frac{\alpha }{2}z^{2}+2i\pi \kappa _{n}z}\prod_{\xi \in
Z_{1}}(e^{2i\pi z}-e^{2i\pi \xi })\prod_{\xi \in Z_{1}}(e^{-2i\pi
z}-e^{-2i\pi \xi })\text{,}
\end{equation*}%
where $C$ is a normalization constant. Since $Z$ is a finite set, then
trivially $G\in \mathcal{F}_{\infty ,\kappa _{n}}^{\alpha }$. Next important
observation is the possibility of writing $G$ as a linear combination of $%
F_{k}(z)=e^{-\frac{\pi ^{2}}{\alpha }(k+\kappa _{n})^{2}}e^{\frac{\alpha }{2}%
z^{2}+2i\pi (\kappa _{n}+k)z}$ (since these functions are multiples of the
basis functions (\ref{basisF})) 
\begin{equation}
G(z)=\sum_{k=-\lfloor N/2\rfloor +1}^{\lfloor N/2\rfloor
}a_{k}F_{k}(z),\qquad \sum |a_{k}|^{2}=1\text{.}  \label{expansion}
\end{equation}%
The representation (\ref{expansion}) will now be used to obtain a lower
bound for $\Vert G\Vert _{\mathcal{F}_{\infty ,\kappa _{n}}^{\alpha }}$.\
First note that 
\begin{align*}
\int_{I_{n}}|F_{k}(z)|^{2}e^{-\alpha |z|^{2}}dA(z)& =e^{-2\frac{\pi ^{2}}{%
\alpha }(k+\kappa _{n})^{2}}\int_{-\frac{r_{n}}{2}}^{\frac{r_{n}}{2}%
}e^{-4\pi (\kappa _{n}+k)y-2\alpha y^{2}}dy \\
& \geq \int_{-\frac{\pi }{2\alpha }N}^{\frac{\pi }{2\alpha }N}e^{-2\left( 
\frac{\pi }{\sqrt{\alpha }}(\kappa _{n}+k)+\sqrt{\alpha }y\right) ^{2}}dy \\
& \geq \frac{1}{\sqrt{\alpha }}\int_{\frac{\pi }{\alpha }(-\lfloor \frac{N}{2%
}\rfloor +k+\kappa _{n})}^{\frac{\pi }{\alpha }(\lfloor \frac{N}{2}\rfloor
+k+\kappa _{n})}e^{-2y^{2}}dy\geq C\text{.}
\end{align*}%
Inserting (\ref{expansion}) and using the orthogonality of the $F_{k}$'s, 
\begin{equation*}
\int_{I_{n}}|G(z)|^{2}e^{-\alpha |z|^{2}}dA(z)=\sum
|a_{k}|^{2}\int_{I_{n}}|F(z)|^{2}e^{-\alpha |z|^{2}}dA(z)\geq C\text{.}
\end{equation*}%
And we obtain 
\begin{equation*}
\Vert G\Vert _{\mathcal{F}_{\infty ,\kappa _{n}}^{\alpha }}\geq \frac{1}{%
\sqrt{r_{n}}}\left(\int_{I_{n}}|G(z)|^{2}e^{-\alpha
|z|^{2}}dA(z)\right)^{1/2}\geq \sqrt{C}\frac{1}{\sqrt{r_{n}}}\text{.}
\end{equation*}%
Next, for $z=x+iy\in \mathbb{C}/\Gamma $ such that $\left\vert y\right\vert
\geq (1+t)r_{n}/2$, $t>0$, 
\begin{align*}
|F_{k}(z)|^{2}e^{-\alpha |z|^{2}}& =e^{-2\left( \frac{\pi }{\sqrt{\alpha }}%
(\kappa _{n}+k)+\sqrt{\alpha }y\right) ^{2}} \\
& \leq e^{-2\left( \sqrt{\alpha }|y|-\frac{\pi }{\sqrt{\alpha }}(\kappa
_{n}+k)\right) ^{2}} \\
& \leq e^{-2(\frac{\sqrt{\alpha }}{2}tr_{n}-\frac{\pi }{\sqrt{\alpha }}%
\kappa _{n})^{2}}\text{,}
\end{align*}%
for $r_{n}$ sufficiently large. Thus, by (\ref{expansion}) and
Cauchy-Schwarz, 
\begin{equation*}
|G(z)|^{2}e^{-\alpha |z|^{2}}\leq r_{n}e^{-2(\frac{\sqrt{\alpha }}{2}tr_{n}-%
\frac{\pi }{\sqrt{\alpha }}\kappa _{n})^{2}}\text{.}
\end{equation*}%
Now, let $Z_{n}=Z-\mathrm{Im}(w_{n})$ such that 
\begin{equation*}
\Vert G_{n}|Z_{n}\Vert _{\infty ,\alpha }=\sup_{\underset{|\mathrm{Im}%
(z)|\geq r_{n}(1+t)/2}{z\in Z_{n}}}|G(z)|e^{-\frac{\alpha }{2}|z|^{2}}\leq 
\sqrt{r_{n}}e^{-(\frac{\sqrt{\alpha }}{2}tr_{n}-\frac{\pi }{\sqrt{\alpha }}%
\kappa _{n})^{2}}
\end{equation*}%
and consider the following function 
\begin{equation*}
H_{n}(z)=T_{i\mathrm{Im}(w_{n})}G_{n}(z)=e^{-i\alpha z\mathrm{Im}(w_{n})-%
\frac{\alpha }{2}\mathrm{Im}(w_{n})^{2}}G_{n}(z-i\mathrm{Im}(w_{n}))\text{.}
\end{equation*}%
We can easily verify that $H_{n}$ satisfies the functional equation (\ref%
{funct-equa}) with $\nu =0$, and that 
\begin{equation*}
\Vert H_{n}\Vert _{\mathcal{F}_{\infty ,0}^{\alpha }}=\sup_{z\in \Lambda
(\Gamma )}|G_{n}(z-i\mathrm{Im}(w_{n}))|e^{-\frac{\alpha }{2}|z-i\mathrm{Im}%
(w_{n})|^{2}}=\Vert G_{n}\Vert _{\mathcal{F}_{\infty ,\kappa _{n}}^{\alpha
}}\geq C\frac{1}{\sqrt{r_{n}}}\text{.}
\end{equation*}%
We have also 
\begin{equation*}
\Vert H_{n}|Z\Vert _{\infty ,\alpha }=\Vert G_{n}|Z_{n}\Vert _{\infty
,\alpha }\leq \sqrt{r_{n}}e^{-(\frac{\sqrt{\alpha }}{2}tr_{n}-\frac{\pi }{%
\sqrt{\alpha }}\kappa _{n})^{2}}\text{.}
\end{equation*}%
Since $r_{n}\rightarrow +\infty $, we conclude that 
\begin{equation*}
\lim_{n\rightarrow +\infty }\frac{\Vert H_{n}|Z\Vert _{\infty ,\alpha }}{%
\Vert H_{n}\Vert _{\mathcal{F}_{\infty ,0}^{\alpha }}}\leq
C\lim_{n\rightarrow +\infty }r_{n}e^{-(\frac{\sqrt{\alpha }}{2}tr_{n}-\frac{%
\pi }{\sqrt{\alpha }}\kappa _{n})^{2}}=0\text{.}
\end{equation*}%
This contradicts the assumption that $Z$ is sampling for $\mathcal{F}%
_{\Gamma }^{\infty }$.
\end{proof}

Now we will prepare for the proof of Theorem \ref{SampInf}.\ The first step
is the following proposition.

\begin{proposition}
\label{sam-to-uni}If $Z$ is a separated sampling sequence for $\mathcal{F}%
_{2,\nu }^{\alpha }\left( \mathbb{C}/\mathbb{Z}\right)$, then $Z$ is a set
of uniqueness for $\mathcal{F}_{\infty ,\nu }^{\alpha }\left( \mathbb{C}/%
\mathbb{Z}\right) $.
\end{proposition}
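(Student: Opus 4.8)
The plan is to argue by contraposition: assuming $Z$ is \emph{not} a set of uniqueness for $\mathcal{F}_{\infty ,\nu }^{\alpha }\left( \mathbb{C}/\mathbb{Z}\right)$, I will produce a nonzero $F\in \mathcal{F}_{\infty ,\nu }^{\alpha }\left( \mathbb{C}/\mathbb{Z}\right)$ vanishing on all of $Z$, and from it manufacture a sequence of $\mathcal{F}_{2,\nu }^{\alpha }$-functions contradicting the lower sampling inequality for $\mathcal{F}_{2,\nu }^{\alpha }$. So start with $F\neq 0$, $F\in \mathcal{F}_{\infty ,\nu }^{\alpha }$, with $F|_{Z}=0$. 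The obstacle is that such an $F$ need not lie in $\mathcal{F}_{2,\nu }^{\alpha }$ at all, so it cannot be plugged directly into \eqref{samplingFockdef}. The standard remedy (as in \cite{seip1}) is to multiply $F$ by a slowly-varying ``bump'' that is bounded, localizes $F$ to a large strip $I_{0,r}$, brings it into $\mathcal{F}_{2,\nu }^{\alpha }$, and yet does not destroy the vanishing on $Z$ nor shrink the $\mathcal{F}_{2,\nu }^{\alpha }$-norm too much relative to the (now small) sampled sum $\Vert F_r|Z\Vert _{2,\alpha }$.

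Concretely, after an isometric Weyl shift by some $w$ with $\mathrm{Im}(w)\in \tfrac{\pi }{\alpha }\mathbb{Z}$ (using the isometries \eqref{weyl}), I may normalize so that $|F(z_0)|e^{-\frac{\alpha }{2}|z_0|^2}$ is close to $\Vert F\Vert _{\mathcal{F}_{\infty ,\nu }^{\alpha }}$ for some point $z_0$ with bounded imaginary part, say $\mathrm{Im}(z_0)\in[0,\tfrac{\pi}{\alpha}]$. For each large $r>0$ I then set
\begin{equation*}
F_r(z) := F(z)\,\psi_r(\mathrm{Im}(z)),
\end{equation*}
where $\psi_r$ is a smooth cutoff, $\psi_r\equiv 1$ on $[-r/2,r/2]$, supported in $[-r,r]$, with $0\le \psi_r\le 1$; since $\psi_r$ depends only on $\mathrm{Im}(z)$, the product $F_r$ still satisfies the functional equation \eqref{funct-equa}, and since $\psi_r$ is bounded and compactly supported in the vertical direction, $F_r\in \mathcal{F}_{2,\nu }^{\alpha }$. (If holomorphy of the test object is needed one instead uses the genuinely holomorphic multiplier $e^{-\varepsilon z^2}$-type factors as in Seip, but in the strip the cheap cutoff suffices because the $\mathcal{F}_{2,\nu}$ norm is an honest integral over $[0,1)\times\mathbb{R}$.) Then $F_r$ still vanishes on $Z$ wherever $\psi_r$ does not vanish, and on the part of $Z$ where $\psi_r$ is supported but $F_r\ne 0$ — i.e.\ the ``annulus'' $r/2\le |\mathrm{Im}(z)|\le r$ — one controls $\Vert F_r|Z\Vert _{2,\alpha }^2$ by the number of points of $Z$ there times $\Vert F\Vert_{\mathcal{F}_{\infty,\nu}^{\alpha}}^2$, which is $O(r)$ by separation of $Z$; meanwhile $\Vert F_r\Vert _{\mathcal{F}_{2,\nu }^{\alpha }}^2 \ge c\,r\,\Vert F\Vert_{\mathcal{F}_{\infty,\nu}^{\alpha}}^2$ because $|F|e^{-\frac{\alpha}{2}|z|^2}$ is, by the normalization at $z_0$ and the Lipschitz-type estimate \cite[Lemma 3.1]{seip1}, bounded below on a fixed-size disc, and then by quasi-periodicity (equivalently by the reproducing-kernel lower bound) one can in fact arrange a lower bound that grows \emph{faster} than $r$ — e.g.\ by working with a fixed $F$ and noting its growth, or by a second cutoff argument.

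The crux is therefore the quantitative comparison
\begin{equation*}
\frac{\Vert F_r|Z\Vert _{2,\alpha }}{\Vert F_r\Vert _{\mathcal{F}_{2,\nu }^{\alpha }}} \longrightarrow 0 \quad (r\to\infty),
\end{equation*}
which, once established, contradicts $M_2^-(Z,\alpha)<\infty$. To get the numerator $o(\sqrt r)$ and the denominator $\gtrsim \sqrt r$ one argues exactly as in the proof of Theorem~\ref{SampInf} above: the key input is that a nonzero function in $\mathcal{F}_{\infty,\nu}^{\alpha}$ whose modulus times the weight is bounded above cannot have its ``mass'' $\int_{I_{0,r}}|F|^2 e^{-\alpha|z|^2}\,dA$ grow slower than $r$ (else the function would be negligible, contradicting $F\ne 0$ via the reproducing kernel), whereas the sampled contribution near the cutoff region scales like the point count $n(Z,\text{annulus})=O(r)$ but can be made negligible against the bulk by pushing the cutoff out and using that, after an appropriate Weyl translation aligning $z_0$ near the origin, the ratio of boundary-annulus mass to total mass tends to $0$. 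I expect the main obstacle to be precisely this last balancing act — making the denominator grow strictly faster than the numerator — and the geometric subtlety is the usual one flagged in the introduction: translations are only available for $\mathrm{Im}(w)\in\tfrac{\pi}{\alpha}\mathbb{Z}$, so the alignment of the peak point $z_0$ can only be done up to a bounded vertical shift in $[0,\tfrac{\pi}{\alpha}]$, contributing only the harmless factor $e^{\pi^2/(2\alpha)}$ seen in Lemma~\ref{samp-weak-uniqueness}. With the ratio driven to zero, the contradiction is complete and $Z$ must be a set of uniqueness for $\mathcal{F}_{\infty ,\nu }^{\alpha }\left( \mathbb{C}/\mathbb{Z}\right)$.
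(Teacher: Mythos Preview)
There is a genuine gap. Your truncated function $F_r(z)=F(z)\,\psi_r(\mathrm{Im}(z))$ is \emph{not holomorphic}, so it does not belong to $\mathcal{F}_{2,\nu }^{\alpha }\left( \mathbb{C}/\mathbb{Z}\right)$, which by definition consists of entire functions. The sampling inequality \eqref{samplingFockdef} therefore cannot be applied to $F_r$, and the contradiction never gets off the ground. Your parenthetical remark that ``the cheap cutoff suffices because the $\mathcal{F}_{2,\nu}$ norm is an honest integral'' is precisely the mistake: the norm is an integral, but membership in the space still requires analyticity. The fallback you mention, multiplying directly by $e^{-\varepsilon z^{2}}$, does not fix this either: such a multiplier changes the parameter in the functional equation \eqref{funct-equa} from $\alpha$ to $\alpha-2\varepsilon$, so the product lands in the wrong space.

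Even setting holomorphy aside, your quantitative balance does not close. With only $F\in\mathcal{F}_{\infty,\nu}^{\alpha}$ you know $|F(z)|e^{-\frac{\alpha}{2}|z|^{2}}\le C$, so the annular sampled sum is $O(r)$ and the $L^{2}$-mass over $I_{0,r}$ is also $O(r)$; both numerator and denominator are $O(\sqrt{r})$, and nothing forces the ratio to $0$. The paper avoids both problems at once by a different route: from the failure of uniqueness it first deduces, via the perturbation Lemma~\ref{lemma2}, a nonzero $F\in\mathcal{F}_{\infty,\nu}^{\alpha-\epsilon}\!\left(\mathbb{C}/\mathbb{Z}\right)$ vanishing on $Z$, and then takes the \emph{holomorphic} multiplier
\[
G(z)=e^{\frac{\epsilon}{2}z^{2}}F(z)=e^{\frac{\alpha}{2}z^{2}}\,e^{-\frac{\alpha-\epsilon}{2}z^{2}}F(z),
\]
which satisfies \eqref{funct-equa} with parameter $\alpha$ and obeys $|G(z)|e^{-\frac{\alpha}{2}|z|^{2}}\le e^{-\epsilon y^{2}}$. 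Thus $G$ is an entire, nonzero element of $\mathcal{F}_{2,\nu}^{\alpha}\!\left(\mathbb{C}/\mathbb{Z}\right)$ vanishing on $Z$, immediately contradicting the sampling hypothesis. The missing idea in your proposal is exactly this passage through the slightly smaller weight $\alpha-\epsilon$, which is what allows a holomorphic (rather than smooth) localizing factor.
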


\begin{proof}
We may assume that $\nu =0$. Suppose that $Z$ is sampling for $\mathcal{F}%
_{2,\nu }^{\alpha }\left( \mathbb{C}/\mathbb{Z}\right) $, but not a set of
uniqueness for $\mathcal{F}_{\infty ,\nu }^{\alpha }\left( \mathbb{C}/%
\mathbb{Z}\right) $. Then there exists a function in $\mathcal{F}_{\infty
,\nu }^{\alpha }\left( \mathbb{C}/\mathbb{Z}\right) $ not identically zero
and vanishing on $Z$. Then $Z$ is not sampling for $\mathcal{F}_{\infty ,\nu
}^{\alpha }\left( \mathbb{C}/\mathbb{Z}\right) $. According to Lemma \ref%
{lemma2}, $Z$ is not sampling for $\mathcal{F}_{\infty ,\nu }^{\alpha
-\epsilon }\left( \mathbb{C}/\mathbb{Z}\right) $ for $\epsilon $
sufficiently small. Then, there exists a sequence $(F_{n})_{n}$ of unit
vectors in $\mathcal{F}_{\infty ,\nu }^{\alpha -\epsilon }\left( \mathbb{C}/%
\mathbb{Z}\right)$ such that $\Vert F_{n}|Z\Vert _{\infty ,\alpha -\epsilon
}\rightarrow 0$. By Montel's theorem, we can assume that $%
F_{n}(z)\rightarrow F$ on every compact set. Thus we have $F\in \mathcal{F}%
_{\infty ,\nu }^{\alpha -\epsilon }\left( \mathbb{C}/\mathbb{Z}\right)$ and $%
F$ vanish on $Z$. Consider the function 
\begin{equation}
G(z)=e^{\frac{\alpha }{2}z^{2}}e^{-\frac{\alpha -\epsilon }{2}z^{2}}F(z)%
\text{.}
\end{equation}%
Note that $e^{-\frac{\alpha -\epsilon }{2}z^{2}}F(z)$ is $\mathbb{Z}$%
-periodic and hence $G$ verifies the functional equation \eqref{funct-equa}.
Furthermore, since $|F(z)|\leq e^{\frac{\alpha -\epsilon }{2}|z|^{2}}$ we
get 
\begin{equation}
|G(z)|e^{-\frac{\alpha }{2}|z|^{2}}\leq e^{-\epsilon y^{2}},\quad y=\mathrm{%
Im} (z)\text{.}
\end{equation}%
Thus $G\in \mathcal{F}_{2,\nu }^{\alpha }\left( \mathbb{C}/\mathbb{Z}\right) 
$ and vanishes on $Z$, which is a contradiction.
\end{proof}

\begin{proof}[Proof of Theorem \protect\ref{cct}]
We consider a set of sampling $Z$ for $\mathcal{F}_{2,\nu }^{\alpha }\left( 
\mathbb{C}/\mathbb{Z}\right) $. By classical arguments, $Z$ can be written
as a finite union of separated sequences (as stated in Lemma 7.1 \cite{seip1}%
). By Lemma \ref{limits}, $W(Z)$ consists only of sets of sampling. By
Proposition \ref{sam-to-uni}, every set of sampling for $\mathcal{F}_{2,\nu
}^{\alpha }\left( \mathbb{C}/\mathbb{Z}\right) $ is a set of uniqueness for $%
\mathcal{F}_{\infty ,\nu }^{\alpha }\left( \mathbb{C}/\mathbb{Z}\right) $.
Thus, every set in $W(Z)$ is a set of uniqueness for $\mathcal{F}_{\infty
,\nu }^{\alpha }\left( \mathbb{C}/\mathbb{Z}\right) $. By Lemma \ref%
{samp-weak-uniqueness}, $Z$ is a set of sampling for $\mathcal{F}_{\infty
,\nu }^{\alpha }\left( \mathbb{C}/\mathbb{Z}\right) $. By Theorem \ref%
{SampInf}, $Z$ contains a separated sequence $Z^{\prime }$ which is also a
set of sampling and such that $D^{-}(Z^{\prime })>\frac{\alpha }{\pi }$.
\end{proof}

\subsection{Sufficient conditions for sampling}

In this section we will prove the following sufficient condition of the
density theorem for sampling in $\mathcal{F}_{2,\nu }^{\alpha }\left( 
\mathbb{C}/\mathbb{Z}\right) $.

\begin{theorem}
\label{suff} Let $Z$ be a separated sequence of $\mathbb{C}/\mathbb{Z}$. If $%
D^{-}(Z)>\frac{\alpha }{\pi }$, then $Z$ is a set of sampling for $\mathcal{F%
}_{2,\nu }^{\alpha }\left( \mathbb{C}/\mathbb{Z}\right) $.
\end{theorem}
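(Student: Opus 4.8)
The plan is to follow Beurling's scheme as adapted to Fock spaces in \cite{seip1,seip2}, reducing the sufficiency to a uniqueness statement for $\mathcal{F}_{\infty ,\nu }^{\alpha }\left( \mathbb{C}/\mathbb{Z}\right) $. Since $Z$ is separated, the upper sampling inequality holds automatically (as recorded after \eqref{upper}), so only the lower bound $M_{2}^{-}(Z,\alpha )<\infty $ has to be established. The heart of the matter is the following uniqueness lemma, which I would prove first: \emph{if $D^{-}(Z^{\prime })>\frac{\alpha }{\pi }$, then $Z^{\prime }$ is a set of uniqueness for $\mathcal{F}_{\infty ,\nu }^{\alpha }\left( \mathbb{C}/\mathbb{Z}\right) $.}

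To prove it, suppose $F\in \mathcal{F}_{\infty ,\nu }^{\alpha }\left( \mathbb{C}/\mathbb{Z}\right) $, $F\not\equiv 0$, vanishes on $Z^{\prime }$. The function $u(z):=\log |F(z)|-\tfrac{\alpha }{2}|z|^{2}$ is $\mathbb{Z}$-periodic --- a direct consequence of \eqref{funct-equa}, since $|F(z)|e^{-\frac{\alpha }{2}|z|^{2}}$ is periodic --- and bounded above by $\log \Vert F\Vert _{\mathcal{F}_{\infty ,\nu }^{\alpha }}$. Put $\phi (y):=\int_{0}^{1}u(x,y)\,dx$; applying Green's formula on the rectangle $[0,1]\times [-\tfrac{s}{2},\tfrac{s}{2}]$, the contributions of the two vertical edges $x=0,1$ cancel by periodicity, and one finds that $\psi (y):=\phi (y)+\alpha y^{2}$ is convex with $\psi (y)\leq \alpha y^{2}+O(1)$, while the zero count over $Q_{s}:=[0,1)\times [-\tfrac{s}{2},\tfrac{s}{2}]$ is $n(Z_{F},Q_{s})=\tfrac{1}{2\pi }\bigl(\psi ^{\prime }(\tfrac{s}{2})-\psi ^{\prime }(-\tfrac{s}{2})\bigr)$. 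Combining convexity with the quadratic bound on $\psi $ gives, by the strip analogue of the Jensen-type estimate of \cite{seip1,seip2}, $\int_{0}^{\rho }n(Z_{F},Q_{s})\,ds\leq \tfrac{\alpha }{2\pi }\rho ^{2}+O(1)$. On the other hand $D^{-}(Z^{\prime })>\frac{\alpha }{\pi }$ yields, for some $\epsilon >0$ and all large $s$, $n(Z_{F},Q_{s})\geq n(Z^{\prime },Q_{s})\geq (\tfrac{\alpha }{\pi }+\epsilon )s$, hence $\int_{0}^{\rho }n(Z_{F},Q_{s})\,ds\geq (\tfrac{\alpha }{2\pi }+\tfrac{\epsilon }{2})\rho ^{2}-O(1)$, a contradiction for $\rho $ large. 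The decisive feature is that the lower density hypothesis controls the zero count \emph{at all scales at once}, which is precisely what makes the Jensen bound sharp enough; a single-scale count would lose a constant.

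Granting the lemma, the rest follows the classical pattern. Weak limits of translates preserve the densities (which depend only on imaginary parts) and do not decrease $D^{-}$, so every set in $W(Z)$ has lower density $\geq D^{-}(Z)>\frac{\alpha }{\pi }$ and is therefore, by the uniqueness lemma, a set of uniqueness for $\mathcal{F}_{\infty ,\nu }^{\alpha }\left( \mathbb{C}/\mathbb{Z}\right) $; by Lemma \ref{samp-weak-uniqueness}, $Z$ is sampling for $\mathcal{F}_{\infty ,\nu }^{\alpha }\left( \mathbb{C}/\mathbb{Z}\right) $. To upgrade to sampling for $\mathcal{F}_{2,\nu }^{\alpha }\left( \mathbb{C}/\mathbb{Z}\right) $, one uses that $Z$ is separated: a homogeneous approximation property argument in the spirit of \cite{seip1} promotes the global $\mathcal{F}_{\infty }$ sampling inequality to a uniform \emph{local} one on slabs $[0,1)\times [b,b+1]$, and summing these over $b\in \mathbb{Z}$ --- the enlarged slabs overlapping boundedly because $Z$ is separated --- yields the lower $\mathcal{F}_{2}$ inequality.

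Two steps carry the real weight. The first is the sharpness of the Jensen-type estimate in the uniqueness lemma, handled exactly by feeding in the lower-density bound on every scale. The second is the homogeneous approximation property used in the passage from $\mathcal{F}_{\infty }$ to $\mathcal{F}_{2}$; this is where the geometry of the strip --- translation invariant only vertically, and only by multiples of $\frac{\pi }{\alpha }$ preserving \eqref{funct-equa} --- demands the most care. The remaining reductions transcribe from \cite{Beurling,seip1,seip2} with only notational changes.
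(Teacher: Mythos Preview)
Your route diverges substantially from the paper's. The paper proves Theorem \ref{suff} \emph{constructively}: given $D^{-}(Z)>\alpha/\pi$, it passes (via Beurling's extraction argument) to a subsequence uniformly close to a vertical lattice $\Lambda_\beta$ with $\beta>\alpha$, builds the explicit product function $G$ of Lemma \ref{estimatesg}, derives the Lagrange-type expansion of the Corollary, and then obtains the lower $\mathcal{F}_{2,\nu}^\alpha$ sampling bound directly from that expansion by Cauchy--Schwarz together with the growth estimates on $G$ and $G'$. No detour through $\mathcal{F}_{\infty}$ is taken for sufficiency.

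Your uniqueness lemma via the strip Jensen inequality is correct and attractive, and combined with the stability of $D^{-}$ under weak limits of vertical translates it does yield, through Lemma \ref{samp-weak-uniqueness}, that $Z$ is sampling for $\mathcal{F}_{\infty,\nu}^\alpha\left(\mathbb{C}/\mathbb{Z}\right)$. The problem is the final step: promoting $\mathcal{F}_\infty$-sampling to $\mathcal{F}_2$-sampling. This is a genuine gap. The homogeneous approximation property is a device for \emph{necessary} density conditions (the Ramanathan--Steger comparison), not for sufficiency, and no argument of the type you sketch appears in \cite{seip1}. The uniform local slab inequality you want, namely $\sup_{S_b}|F|^2e^{-\alpha|\cdot|^2}\leq C\sum_{Z\cap S_b'}|F|^2e^{-\alpha|\cdot|^2}$ with $C$ independent of $b$, does not follow from the global $\mathcal{F}_\infty$ bound: entire functions cannot be localised, and the weighted modulus of $F$ on $S_b$ may be controlled only by samples arbitrarily far from $S_b$. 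In \cite{seip1,seip2} and here, the $\mathcal{F}_2$ sufficiency is established \emph{only} through the explicit reconstruction formula; the equivalence of $\mathcal{F}_\infty$- and $\mathcal{F}_2$-sampling for separated $Z$ is a \emph{consequence} of the completed density theorems, not an input to them. To close your argument you must either supply a genuinely new direct $\mathcal{F}_\infty\Rightarrow\mathcal{F}_2$ mechanism, or revert to the constructive route via $G$ and the Lagrange expansion as the paper does.
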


We start by considering a sequence $Z=(z_{n})_{n}$ in $\Lambda(\mathbb{Z})$
which is uniformly close to $\Lambda _{\alpha }:=i\frac{\pi }{\alpha }%
\mathbb{Z}$, in the sense that there exists $Q>0$ such that 
\begin{equation}
\left\vert \mathrm{Im}(z_{k})-\frac{\pi }{\alpha }k\right\vert \leq Q\text{,}
\label{UniClose}
\end{equation}%
for all $k\in \mathbb{Z}$. Define the function 
\begin{equation*}
G(z)=e^{\frac{\alpha }{2}z^{2}}\prod_{k\geq 0}(1-e^{2i\pi (z_{k}-z)})\times
\prod_{k<0}(1-e^{2i\pi (z-z_{k})})\text{.}
\end{equation*}%
The product converges uniformly on every compact subset of $\mathbb{C}$,
thus $G$ is a holomorphic function on $\mathbb{C}$ which satisfies the
functional equation (\ref{funct-equa}). It will play a role in $\mathbb{C}/%
\mathbb{Z}$ similar to the Weierstrass function in $\mathbb{C}$, which has
been used in \cite{seip2}.

\begin{lemma}
\label{estimatesg}Let $Z$ be uniformly close to $\Lambda _{\alpha }$, and $G$
be its associated function defined above. Then 
\begin{equation*}
\left\vert G(z)\right\vert e^{-\frac{\alpha }{2}|z|^{2}}\leq C_{1}e^{\gamma
^{+}|y|}\text{,}
\end{equation*}%
and 
\begin{equation}
\left\vert G(z)\right\vert e^{-\frac{\alpha }{2}|z|^{2}}\geq C_{1}e^{-\gamma
^{-}|y|}d(z,Z)\text{,}  \label{lower-bd}
\end{equation}%
where $\gamma ^{+}$ and $\gamma ^{-}$are two positive constants. Moreover,
for $z_{n}\in Z$,%
\begin{equation*}
\left\vert G^{\prime }(z_{n})\right\vert e^{-\frac{\alpha }{2}%
|z_{n}|^{2}}\geq C_{1}e^{-\gamma ^{-}|\mathrm{Im}(z_{n})|}\text{.}
\end{equation*}
\end{lemma}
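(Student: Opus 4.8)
The strategy is to reduce every estimate to a careful analysis of the infinite product $\prod_{k\ge 0}(1-e^{2i\pi(z_k-z)})\prod_{k<0}(1-e^{2i\pi(z-z_k)})$, which, after multiplication by $e^{\frac{\alpha}{2}z^2}$, controls $|G(z)|e^{-\frac{\alpha}{2}|z|^2}$. The central observation is that the $k$-th factor, written as $1-e^{2i\pi(z_k-z)}$ for $k\ge 0$, has modulus comparable to $1$ when $\mathrm{Im}(z)$ is far below $\mathrm{Im}(z_k)$ (since then $|e^{2i\pi(z_k-z)}|=e^{-2\pi(\mathrm{Im}(z_k)-\mathrm{Im}(z))}$ is tiny), and is comparable to $e^{2\pi(\mathrm{Im}(z)-\mathrm{Im}(z_k))}$ when $\mathrm{Im}(z)$ is far above $\mathrm{Im}(z_k)$; symmetrically for $k<0$. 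Because of the uniform-closeness hypothesis \eqref{UniClose}, the imaginary parts $\mathrm{Im}(z_k)$ are within $Q$ of the arithmetic progression $\frac{\pi}{\alpha}k$, so summing $\log$ of these factors over $k$ produces, up to a bounded error, the same quantity as for the exact lattice $i\frac{\pi}{\alpha}\mathbb{Z}$. For that exact lattice the product is (a constant times) the classical theta function, whose growth is precisely $e^{\frac{\alpha}{2}|z|^2}$ modulated by a factor comparable to $d(z,i\frac{\pi}{\alpha}\mathbb{Z})$ near the zeros; this is the model computation behind the claimed bounds.

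First I would fix notation: write $y=\mathrm{Im}(z)$, $y_k=\mathrm{Im}(z_k)$, and split the index set into $k$ with $y_k$ above $y+1$, those with $y_k$ below $y-1$, and the finitely many (boundedly many, thanks to separation of $Z$ and \eqref{UniClose}) indices with $|y_k-y|\le 1$. For the far indices I would estimate $\log|1-e^{2i\pi(z_k-z)}|$ by $\log(1+e^{-2\pi|y_k-y|})$ on the side where the exponential is small — this sums to an absolutely convergent series bounded by a constant — and by $2\pi(y-y_k)+O(e^{-2\pi|y-y_k|})$ on the side where the exponential is large. Summing the linear terms $2\pi(y-y_k)$ over $0\le k$ with $y_k<y$ (and the symmetric terms for $k<0$) against \eqref{UniClose} yields $\frac{\pi\alpha}{\ }$-type quadratic growth that, combined with $e^{\frac{\alpha}{2}z^2}$, telescopes into $e^{\frac{\alpha}{2}|z|^2}$ plus a linear-in-$|y|$ error; the $e^{\pm\gamma^{\pm}|y|}$ factors absorb exactly this linear error together with the contributions from the $x=\mathrm{Re}(z)$ direction (which are bounded since $x\in[0,1)$) and the bounded errors from the convergent series. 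The near indices contribute the factor $d(z,Z)$ from below (when $z$ is close to some $z_n$ the corresponding factor $1-e^{2i\pi(z_n-z)}$ vanishes to first order, giving $|z-z_n|$ up to a nonzero constant) and are harmlessly bounded above.

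For the derivative bound at $z_n\in Z$, I would differentiate $G$ using the product rule: the factor vanishing at $z_n$ differentiates to a nonzero constant (of modulus $2\pi$), and the surviving product of the remaining factors is estimated exactly as in the lower bound \eqref{lower-bd} but with the vanishing factor and its $d(z,Z)$ contribution removed — so one gets $|G'(z_n)|e^{-\frac{\alpha}{2}|z_n|^2}\ge C_1 e^{-\gamma^-|y_n|}$. I expect the main obstacle to be bookkeeping the quadratic cancellation cleanly: one must check that the sum $\sum 2\pi(y-y_k)$ over the appropriate index range, when the $y_k$ are $\frac{\pi}{\alpha}k+O(1)$, really does match $\frac{\alpha}{2}y^2$ up to $O(|y|)$ — this is an Abel-summation/Euler–Maclaurin comparison that is elementary but where sign errors and off-by-$Q$ shifts are easy to make, and where one has to be careful that the error is genuinely linear (not quadratic) in $|y|$ uniformly in $x\in[0,1)$. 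Once that comparison is pinned down, everything else is routine estimation of convergent series and local behavior near zeros.
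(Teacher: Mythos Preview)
Your proposal is correct and follows essentially the same route as the paper: split the infinite product according to whether $\mathrm{Im}(z_k)$ is far above, far below, or within a bounded window of $\mathrm{Im}(z)$, show the far-small-exponential side contributes $\asymp 1$, the far-large-exponential side contributes $e^{\alpha y^2}$ up to an $e^{O(|y|)}$ error via the arithmetic-progression comparison you describe, and the near factors supply the $d(z,Z)$ term. The only cosmetic differences are that the paper uses the threshold $Q$ rather than $1$ for the near/far cutoff and treats the two half-products $k\ge 0$ and $k<0$ separately (also, note that the finiteness of the near index set follows from \eqref{UniClose} alone --- $Z$ is not assumed separated here); your explicit derivation of the derivative bound via the product rule is a point the paper leaves implicit.
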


\begin{proof}
For $z$ in the compact set%
\begin{equation}
\{z\in \mathbb{C}/\Gamma :\,\left\vert \mathrm{Im}(z)\right\vert \leq 2Q\}%
\text{,}  \label{compact}
\end{equation}%
we have $\left\vert G(z)\right\vert \asymp dist(z,Z)$. Therefore we just
need to consider $\left\vert \mathrm{Im}(z)\right\vert \geq 2Q$. This region
can be naturally divided in two, according to the sign of $\left\vert 
\mathrm{Im}(z)\right\vert $. For $\mathrm{Im}(z)\ge 2Q$, we write%
\begin{equation}
\prod_{k\geq 0}\left( 1-e^{2i\pi (z_{k}-z)}\right) =H_{1}(z)H_{2}(z)H_{3}(z)%
\text{,}  \label{decomposition}
\end{equation}%
where 
\begin{equation*}
H_{1}(z)=\prod_{\mathrm{Im}(z_{k})>\mathrm{Im}(z)+Q}\left( 1-e^{2i\pi
(z_{k}-z)}\right) ,\quad H_{2}(z)=\prod_{|\mathrm{Im}(z_{k}-z)|\leq Q}\left(
1-e^{2i\pi (z_{k}-z)}\right)
\end{equation*}%
and 
\begin{equation*}
H_{3}(z)=\prod_{\underset{k\ge0}{\mathrm{Im}(z_{k})< \mathrm{Im}(z)-Q}%
}\left( 1-e^{2i\pi (z_{k}-z)}\right) \text{.}
\end{equation*}%
We have 
\begin{equation*}
\left\vert H_{1}(z)\right\vert =\exp \left( \sum_{\mathrm{Im}(z_{k})>\mathrm{%
Im}(z)+Q}\log |1-e^{2i\pi (z_{k}-z)}|\right) \asymp \exp \left(\pm \sum_{%
\mathrm{Im}(z_{k})>\mathrm{Im}(z)+Q}e^{-2\pi \mathrm{Im}(z_{k}-z)}\right)
\asymp 1\text{.}
\end{equation*}%
For $H_{2}$, we note that there exists a fixed number $N$ (independent of $z$%
) such that the number of points of $Z $ in $\{z \in \mathbb{C}/\Gamma : |%
\mathrm{Im}(z_k) - \mathrm{Im}(z)| \leq Q\} $ is always bounded above by $N$%
. Let $z_{k_{0}}$ be the point of $Z$ closest to $z$. Then 
\begin{equation*}
\left\vert H_{2}(z)\right\vert =\frac{\left( 1-e^{2i\pi
(z_{k_{0}}-z)}\right) }{dist(z,Z)}\left(\prod_{\underset{k\ne k_0}{|\mathrm{%
Im}(z_{k}-z)|\leq Q}}(1-e^{2i\pi (z_{k}-z)})\right)\times dist(z,Z)\asymp
dist(z,Z)\text{.}
\end{equation*}%
In fact, for the upper bound, 
\begin{equation*}
\left\vert \prod_{\underset{k\neq k_0}{\left\vert \mathrm{Im}%
(z-z_{k})\right\vert \leq Q}}(1-e^{2i\pi (z_{k}-z)})\right\vert \leq \prod_{%
\underset{k\neq k_0}{\left\vert \mathrm{Im}(z-z_{k})\right\vert \leq Q}%
}(1+e^{-2\pi \mathrm{Im}(z_{k}-z)})\leq (1+e^{2\pi Q})^{N}\text{.}
\end{equation*}%
For the lower bound, we add a few additional factors into $(1-e^{2i\pi
(z_{k_{0}}-z})$, if necessary, we may assume $\left\vert \mathrm{Im}%
(z-z_{k})\right\vert >\delta _{0}>0$, $k\geq 0$ and $k\neq k_0$. Then, we
get 
\begin{equation*}
\left\vert \prod_{\underset{k\neq k_0}{\left\vert \mathrm{Im}%
(z-z_{k})\right\vert \leq Q}}(1-e^{2i\pi (z_{k}-z)})\right\vert \geq \prod_{%
\underset{k\neq k_0}{\left\vert \mathrm{Im}(z-z_{k})\right\vert \leq Q}%
}\left\vert 1-e^{-2\pi \mathrm{Im}(z_{k}-z)}\right\vert \geq \min
\{(1-e^{-2\pi \delta _{0}})^{N},(e^{2\pi \delta _{0}}-1)^{N}\}.
\end{equation*}

For $H_{3}$, 
\begin{equation*}
\left\vert H_{3}(z)\right\vert =\prod\limits_{\substack{ \mathrm{Im}{\small %
(z}_{k}{\small -z)<-Q}  \\ {\small k\geq 0}}}\left\vert 1-e^{2i\pi
(z_{k}-z)}\right\vert \text{.}
\end{equation*}%
Now, note that, for $\mathrm{Im} (z_{k} -z)<-Q $ and $k\geq 0$, we can find
two absolute constants $c$ and $C$ such that 
\begin{equation*}
ce^{2\pi \mathrm{Im}(z-z_{k})}\leq \left\vert 1-e^{2i\pi
(z_{k}-z)}\right\vert \leq Ce^{2\pi \mathrm{Im}(z-z_{k})}\text{.}
\end{equation*}%
Thus, 
\begin{equation*}
\left\vert H_{3}(z)\right\vert \leq \prod_{_{\substack{ \mathrm{Im}{\small (z%
}_{k}{\small -z)<-Q}  \\ {\small k\geq 0}}}}Ce^{2\pi \mathrm{Im}%
(z-z_{k})}=C^{\#\{k\geq 0\,\text{\ }\mathrm{Im}\left( z_{k}-z\right)
<-Q,\,\}}\exp \left[ \sum_{_{_{\substack{ \mathrm{Im}{\small (z}_{k}{\small %
-z)<-Q;}  \\ {\small k\geq 0}}}}}2\pi \mathrm{Im}(z-z_{k})\right]
\end{equation*}%
Since $Z$ is uniformly close to $\Lambda _{\alpha }:=i\frac{\pi }{\alpha }%
\mathbb{Z}$, and $k\geq 0$, then $\#\{k\geq 0\,\text{\ }\mathrm{Im}\left(
z_{k}-z\right) <-Q,\,\}\le\frac{\alpha}{\pi} \mathrm{Im}(z)$. And we have
also result, 
\begin{equation*}
\sum_{_{\substack{ \mathrm{Im}{\small (z}_{k}{\small -z)<-Q}  \\ {\small %
k\geq 0}}}}2\pi \mathrm{Im}(z-z_{k})\leq \sum_{0\leq k\leq \frac{\alpha }{%
\pi }\mathrm{Im}(z)}2\pi (\mathrm{Im}(z)-\frac{\pi }{\alpha }k+Q)\leq \alpha 
\mathrm{Im}(z)^{2}+C\mathrm{Im}(z).
\end{equation*}%
for some positive constant $C$ as $\mathrm{Im}(z)\rightarrow +\infty $. The
lower bound is obtained by the same way. Combining these estimates we find
two positives constants $\gamma ^{+}$ and $\gamma ^{-}$ such that 
\begin{equation*}
e^{\alpha \mathrm{Im}(z)^{2}-\gamma ^{-}\mathrm{Im}(z)}\leq \left\vert
H_{3}(z)\right\vert \leq e^{\alpha \mathrm{Im}(z)^{2}+\gamma ^{+}\mathrm{Im}%
(z)}\text{.}
\end{equation*}%
Finally, since $\mathrm{Im}(z)\geq 2Q$, we get 
\begin{equation*}
1-e^{-4\pi Q}e^{2\pi \mathrm{Im}(z_{k})}\leq \left\vert 1-e^{2i\pi
(z-z_{k})}\right\vert \leq 1+e^{-4\pi Q}e^{2\pi \mathrm{Im}(z_{k})}
\end{equation*}%
and $Z$ is uniformly close to $\Lambda _{\alpha }$, it holds 
\begin{equation*}
\prod_{k\leq 0}\left\vert 1-e^{2i\pi (z-z_{k})}\right\vert \asymp 1\text{.}
\end{equation*}%
If\textbf{\ }$\mathrm{Im}$\textbf{$($}$z$\textbf{$)\leq -$}$2Q$ write 
\begin{equation*}
\prod_{k<0}\left( 1-e^{2i\pi (z-z_{k})}\right) =\widetilde{H}_{1}(z)%
\widetilde{H}_{2}(z)\widetilde{H}_{3}(z)\text{,}
\end{equation*}%
where 
\begin{equation*}
\widetilde{H}_{1}(z)=\prod_{\mathrm{Im}(z_{k})<\mathrm{Im}(z)-Q}\left(
1-e^{2i\pi (z-z_{k})}\right) ,\quad \widetilde{H}_{2}(z)=\prod_{|\mathrm{Im}%
(z-z_{k})|\leq Q}\left( 1-e^{2i\pi (z-z_{k})}\right)
\end{equation*}%
and 
\begin{equation*}
\widetilde{H}_{3}(z)=\prod_{\mathrm{Im}(z_{k})\geq \mathrm{Im}(z)+Q,\
k<0}\left( 1-e^{2i\pi (z-z_{k})}\right) \text{.}
\end{equation*}%
By the same way as above we obtain 
\begin{eqnarray*}
\left\vert \widetilde{H}_{1}(z)\right\vert &\asymp &1\text{,} \\
\left\vert \widetilde{H}_{2}(z)\right\vert &\asymp &dist(z,Z)\text{,} \\
e^{\alpha \mathrm{Im}(z)^{2}-\gamma ^{-}|\mathrm{Im}(z)|} &\leq &\left\vert 
\widetilde{H}_{3}(z)\right\vert \leq e^{\alpha \mathrm{Im}(z)^{2}+\gamma
^{+}|\mathrm{Im}(z)|}
\end{eqnarray*}%
and%
\begin{equation*}
\prod_{k\geq 0}\left( 1-e^{2i\pi (z_{k}-z)}\right) \asymp 1\text{.}
\end{equation*}%
From the above estimates, for $z\in \mathbb{C}$ it holds 
\begin{equation*}
\left\vert G(z)\right\vert e^{-\frac{\alpha }{2}|z|^{2}}=e^{-\alpha \mathrm{%
Im}(z)^{2}}\left\vert \prod_{k\geq 0}(1-e^{2i\pi (z_{k}-z)})\times
\prod_{k\leq 0}(1-e^{2i\pi (z-z_{k})})\right\vert \leq Ce^{\gamma ^{+}|%
\mathrm{Im}(z)|}
\end{equation*}%
and 
\begin{equation*}
\left\vert G(z)\right\vert e^{-\frac{\pi }{2}|z|^{2}}=e^{-\pi \mathrm{Im}%
(z)^{2}}\left\vert \prod_{k\geq 0}(1-e^{2i\pi (z_{k}-z)})\times \prod_{k\leq
0}(1-e^{2i\pi (z-z_{k})})\right\vert \geq Ce^{-\gamma ^{-}|\mathrm{Im}%
(z)|}dist(z,Z)\text{.}
\end{equation*}%
This finishes the proof.
\end{proof}

As consequence of the above result we have the following sampling theorem,
which can be proved by a residues argument, exactly as in Lemma 3.1 of \cite%
{seip2}.

\begin{corollary}
Let $Z=\{z_{n}\}$ be a sequence which is uniformly close to $\Lambda _{\beta
}$, $\alpha <\beta $, and $G$ be its associated function. Then for $F\in 
\mathcal{F}_{\infty ,\nu }^{\alpha }\left( \mathbb{C}/\mathbb{Z}\right) $ we
have 
\begin{equation*}
F(z)=2i\pi \sum_{k\in \mathbb{Z}}F(z_{k})e^{\frac{1}{2}(\beta -\alpha
)z_{k}^{2}}\frac{G(z)e^{-\frac{1}{2}(\beta -\alpha )z^{2}}}{G^{\prime
}(z_{k})(1-e^{2i\pi (z_{k}-z)})}\text{,}
\end{equation*}%
with uniform convergence on compacts of $\mathbb{C}/\Gamma $.
\end{corollary}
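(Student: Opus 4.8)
The plan is a contour-integral/residue computation with a $\mathbb{Z}$-periodized kernel, in the spirit of Lemma~3.1 of \cite{seip2}. First, a reduction. Since $Z$ is uniformly close to $\Lambda_{\beta}$, the associated function $G$ (formed as before Lemma~\ref{estimatesg} but with $\beta$ in place of $\alpha$) satisfies \eqref{funct-equa} with $\beta$ instead of $\alpha$, vanishes precisely on $Z$, and, by Lemma~\ref{estimatesg} read with $\beta$, these zeros are simple and $|G|$ obeys two-sided bounds with $e^{\frac{\beta}{2}|z|^{2}}$. A short computation then shows that the right-hand side of the claimed identity, as a function of $z$, satisfies \eqref{funct-equa} with $\alpha$: the factors $(1-e^{2i\pi(z_{k}-z)})^{-1}$ are $\mathbb{Z}$-periodic, $G(z)$ contributes the $\beta$-quasi-period, and $e^{-\frac{1}{2}(\beta-\alpha)z^{2}}$ converts the $\beta$-quasi-period into the $\alpha$-quasi-period; since $F$ also satisfies \eqref{funct-equa}, it suffices to prove the identity for $z$ in the fundamental domain $[0,1)\times\mathbb{R}$, and by continuity we may assume $z\notin Z$, so $G(z)\neq 0$ and no $z_{k}$ is $\mathbb{Z}$-congruent to $z$.

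Consider the meromorphic function
\begin{equation*}
\Psi_{z}(w)=\frac{F(w)\,e^{\frac{1}{2}(\beta-\alpha)w^{2}}}{G(w)\,(1-e^{2i\pi(w-z)})}\text{.}
\end{equation*}
Using the functional equations of $F$ and $G$ one verifies that $\Psi_{z}$ is $\mathbb{Z}$-periodic in $w$. Its poles are simple: at each $z_{k}\in Z$, with residue $\dfrac{F(z_{k})e^{\frac{1}{2}(\beta-\alpha)z_{k}^{2}}}{G'(z_{k})(1-e^{2i\pi(z_{k}-z)})}$, and at $w\equiv z$, with residue $-\dfrac{1}{2i\pi}\dfrac{F(z)e^{\frac{1}{2}(\beta-\alpha)z^{2}}}{G(z)}$. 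Apply the residue theorem to $\Psi_{z}$ on a rectangle $[0,1]\times[-N,N]$ (translated slightly in the real direction if necessary so that no $z_{k}$ and $z$ lie on its vertical sides), choosing $N=N_{j}\to+\infty$ along a sequence for which the two horizontal edges stay at distance $\geq c>0$ from $Z$; such $N_{j}$ exist because $Z$, being uniformly close to $\Lambda_{\beta}$, has at most a bounded number of points with imaginary part in any unit interval. By $\mathbb{Z}$-periodicity of $\Psi_{z}$ the two vertical edges cancel, and the residue theorem reduces to
\begin{equation*}
\frac{1}{2i\pi}\Big(\int_{\mathrm{Im}(w)=-N}-\int_{\mathrm{Im}(w)=N}\Big)\Psi_{z}(w)\,dw=\sum_{|\mathrm{Im}(z_{k})|<N}\frac{F(z_{k})e^{\frac{1}{2}(\beta-\alpha)z_{k}^{2}}}{G'(z_{k})(1-e^{2i\pi(z_{k}-z)})}-\frac{1}{2i\pi}\frac{F(z)e^{\frac{1}{2}(\beta-\alpha)z^{2}}}{G(z)}\text{,}
\end{equation*}
the integrals being taken over one period from left to right.

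The crux is that the two horizontal integrals vanish as $N=N_{j}\to\infty$. On $\mathrm{Im}(w)=\pm N$ with $\mathrm{Re}(w)$ bounded one has $|F(w)|\leq\|F\|\,e^{\frac{\alpha}{2}|w|^{2}}$, $|e^{\frac{1}{2}(\beta-\alpha)w^{2}}|=e^{\frac{1}{2}(\beta-\alpha)(\mathrm{Re}(w)^{2}-N^{2})}$, $|G(w)|\geq C\,e^{\frac{\beta}{2}|w|^{2}}e^{-\gamma^{-}N}$ by Lemma~\ref{estimatesg} together with the choice of $N_{j}$, and $|1-e^{2i\pi(w-z)}|$ is bounded below for $N$ large; multiplying, the powers of $\mathrm{Re}(w)$ cancel and the exponent of the remaining Gaussian in $N$ is $(\alpha-\beta)N^{2}+O(N)\to-\infty$ because $\alpha<\beta$. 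Hence $\sup_{\mathrm{Re}(w)\in[0,1]}|\Psi_{z}(w)|\to 0$ along the horizontal edges, so these integrals tend to $0$, and the displayed identity becomes
\begin{equation*}
0=\sum_{k\in\mathbb{Z}}\frac{F(z_{k})e^{\frac{1}{2}(\beta-\alpha)z_{k}^{2}}}{G'(z_{k})(1-e^{2i\pi(z_{k}-z)})}-\frac{1}{2i\pi}\frac{F(z)e^{\frac{1}{2}(\beta-\alpha)z^{2}}}{G(z)}\text{,}
\end{equation*}
which is the asserted formula after multiplying through by $2i\pi\,G(z)e^{-\frac{1}{2}(\beta-\alpha)z^{2}}$. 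Absolute convergence of the series, uniformly on compacta of $\mathbb{C}/\Gamma$, follows from the same type of bound applied to one term: $|F(z_{k})|\leq\|F\|e^{\frac{\alpha}{2}|z_{k}|^{2}}$ and $|G'(z_{k})|\geq C\,e^{\frac{\beta}{2}|z_{k}|^{2}}e^{-\gamma^{-}|\mathrm{Im}(z_{k})|}$ (Lemma~\ref{estimatesg}) give that the $k$-th term is $O\big(e^{(\alpha-\beta)\mathrm{Im}(z_{k})^{2}+\gamma^{-}|\mathrm{Im}(z_{k})|}\big)$ uniformly for $z$ in a compact set, and $\mathrm{Im}(z_{k})=\frac{\pi}{\beta}k+O(1)$ makes this summable; the value at points of $Z$ is recovered by continuity. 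The main obstacle is precisely this horizontal-edge estimate, where one must see that the hypothesis $\beta>\alpha$ is exactly what makes the competition between the $e^{\frac{\alpha}{2}|w|^{2}}$-growth of $F$ and the $e^{\frac{\beta}{2}|w|^{2}}$-growth of $G$ (together with the gain $e^{-\frac{1}{2}(\beta-\alpha)N^{2}}$) come out in our favour.
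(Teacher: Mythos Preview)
Your argument is correct and is precisely the residue computation the paper refers to when it says the Corollary ``can be proved by a residues argument, exactly as in Lemma 3.1 of \cite{seip2}''; the paper gives no further detail, so you have supplied what the authors leave to the reader. One small point: the periodicity $\Psi_{z}(w+1)=\Psi_{z}(w)$ holds verbatim only when $F$ and $G$ share the same $\nu$ in \eqref{funct-equa}, and $G$ as constructed has $\nu=0$; this is harmless since the Corollary is applied in the proof of Theorem~\ref{suff} under the standing assumption $\nu=0$, but you may want to note the reduction explicitly.
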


\begin{proof}[Proof of Theorem \protect\ref{suff}]
Suppose that $0<\alpha <\beta $ and $Z$ is a sequence with $D^{-}(Z)=\frac{%
\beta }{\pi }$. Then by the argument of Beurling \cite[p. 356]{Beurling},
there exists a subsequence $Z^{\prime }$ of $Z$ such that $Z^{\prime }$ is
uniformly close to $\Lambda _{\gamma }$ for some $\alpha <\gamma <\beta $
(Beurling's argument requires removing points from $Z$; it can be used here,
since the proofs of Lemma 6.2 and Lemma 6.3 in \cite{seip1} trivially adapt
to this setting, assuring that if one removes a point from a sampling
sequence, what is left remains a sampling sequence). Thus, we may assume
that $Z$ is uniformly close to $\Lambda _{\beta }$ and that $\nu =0$. We
consider the square $\Omega =[0,1[\times \lbrack 0,\frac{\pi }{\alpha }]$.
Since $(\Omega +ik\frac{\pi }{\alpha })_{k\in \mathbb{Z}}$ forms a partition
of $\mathbb{C}/\Gamma $, 
\begin{equation}
\int_{\mathbb{C}/\Gamma }|F(z)|^{2}e^{-\alpha |z|^{2}}dA(z)=\sum_{k\in 
\mathbb{Z}}\int_{\Omega }|T_{ik\frac{\pi }{\alpha }}F(z)|^{2}e^{-\alpha
|z|^{2}}dA(z)\text{.}  \label{dec}
\end{equation}%
Note that $T_{ik\frac{\pi }{\alpha }}$ is a unitary isomorphism of $\mathcal{%
F}_{2,\nu }^{\alpha }\left( \mathbb{C}/\mathbb{Z}\right) $. Note that $Z+ik%
\frac{\pi }{\alpha }$ is uniformly close to $\Lambda _{\beta }$, we write $%
T_{ik\frac{\pi }{\alpha }}F(z)$ according to the above corollary, since $%
\mathcal{F}_{2,\nu }^{\alpha }\left( \mathbb{C}/\mathbb{Z}\right) \subset 
\mathcal{F}_{\infty ,\nu }^{\alpha }\left( \mathbb{C}/\mathbb{Z}\right) $ 
\begin{equation*}
T_{ik\frac{\pi }{\alpha }}F(z)=2i\pi \sum_{n\in \mathbb{Z}}T_{ik\frac{\pi }{%
\alpha }}F(z_{n}+ik\frac{\pi }{\alpha })e^{\frac{1}{2}(\beta -\alpha
)(z_{n}+ik\frac{\pi }{\alpha })^{2}}\frac{G_{k}(z)e^{-\frac{1}{2}(\beta
-\alpha )z^{2}}}{G^{\prime }(z_{n}+ik\frac{\pi }{\alpha })(1-e^{2i\pi
(z_{n}+ik\frac{\pi }{\alpha }-z)})}\text{,}
\end{equation*}%
where $G_{k}$ is the function associated to $Z+ik\frac{\pi }{\alpha }$. Set $%
\epsilon =\beta -\alpha $ and apply Cauchy-Schwarz and the bounds from Lemma %
\ref{estimatesg} yield 
\begin{equation*}
\left\vert T_{ik\frac{\pi }{\alpha }}F(z)\right\vert ^{2}\leq
CH(z)\sum_{n\in \mathbb{Z}}e^{-\epsilon (y_{n}+k\frac{\pi }{\alpha }%
)^{2}+\gamma |y_{n}+k\frac{\pi }{\alpha }|}e^{-\alpha
|z_{n}|^{2}}|F(z_{n})|^{2}\text{,}
\end{equation*}%
with 
\begin{equation*}
H(z)=\sum_{n\in \mathbb{Z}}e^{-\epsilon (y_{n}+k\frac{\pi }{\alpha }%
)^{2}}\left\vert \frac{G_{k}(z)e^{-\frac{1}{2}(\beta -\alpha )z^{2}}}{%
(1-e^{2i\pi (z_{n}+ik\frac{\pi }{\alpha }-z)})}\right\vert ^{2}\text{.}
\end{equation*}%
Here $y_{n}=\mathrm{Im} (z_{n})$. Observing that the integral 
\begin{equation*}
\int_{\Omega }\left\vert \frac{G_{k}(z)e^{-\frac{1}{2}(\beta -\alpha )z^{2}}%
}{(1-e^{2i\pi (z_{n}+ik\frac{\pi }{\alpha }-z)})}\right\vert ^{2}e^{-\alpha
|z|^{2}}dA(z)<\infty
\end{equation*}%
and is independent of $k$ and $n$, the desired estimate follows from (\ref%
{dec}): 
\begin{equation*}
\int_{\mathbb{C}/\Gamma }|F(z)|^{2}e^{-\alpha |z|^{2}}dA(z)\leq C\Vert
F|Z\Vert _{\mathcal{F}_{2,\nu }^{\alpha }\left( \mathbb{C}/\mathbb{Z}\right)
}^{2}\text{.}
\end{equation*}
\end{proof}

\section{Interpolation}

We first show the sufficient condition.

\begin{theorem}
Suppose that $Z$ is a separated sequence. If $D^{+}(Z)<\frac{\alpha }{\pi }$
then $Z$ is interpolating for $\mathcal{F}_{2,\nu }^{\alpha }\left( \mathbb{C%
}/\mathbb{Z}\right) $.
\end{theorem}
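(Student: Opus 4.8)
The plan is to follow Seip's interpolation argument \cite{seip2}, with the Weierstrass‑type function replaced by the function $G$ of Lemma \ref{estimatesg} and the loss of translation invariance compensated by the Weyl operators \eqref{weyl}.

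\emph{Step 1: reduction to a near‑lattice sequence.} Since $D^{+}(Z)<\alpha/\pi$, I would fix $\beta$ with $\pi D^{+}(Z)<\beta<\alpha$ and, by the mirror image of the Beurling regularization used in the proof of Theorem \ref{suff} (adding points instead of removing them), enlarge $Z$ to a separated sequence $\widetilde{Z}=\{z_{n}\}_{n\in\mathbb{Z}}\supset Z$ uniformly close to $\Lambda_{\beta}=i\tfrac{\pi}{\beta}\mathbb{Z}$ in the sense of \eqref{UniClose}. A subset of an interpolating sequence is interpolating (extend the data by $0$ and use the definition of $N_{2}$), so it is enough to treat $\widetilde{Z}$. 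As in the earlier proofs one may take $\nu=0$; alternatively one inserts a factor $e^{2\pi i\nu z}$ into the generating function below, which changes the estimates of Lemma \ref{estimatesg} only by a bounded factor $e^{\pm 2\pi|\nu|\,|\mathrm{Im}\,z|}$ absorbed into $\gamma^{\pm}$.

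\emph{Step 2: interpolating atoms.} For $z_{n}\in\widetilde{Z}$ write $z_{n}=w_{n}+i\tau_{n}$ with $w_{n}=x_{n}+i\tfrac{\pi}{\alpha}\lfloor\tfrac{\alpha}{\pi}\mathrm{Im}\,z_{n}\rfloor$ and $\tau_{n}\in[0,\tfrac{\pi}{\alpha})$, so that $\mathrm{Im}\,w_{n}\in\tfrac{\pi}{\alpha}\mathbb{Z}$ and the residue $i\tau_{n}$ stays in the fixed box $\{0\}\times[0,\tfrac{\pi}{\alpha})$. Reindexing, $\widetilde{Z}-w_{n}$ is again uniformly close to $\Lambda_{\beta}$, with a bound in \eqref{UniClose} uniform in $n$, and $i\tau_{n}\in\widetilde{Z}-w_{n}$. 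Let $G_{n}$ be the function of Lemma \ref{estimatesg} attached to $\widetilde{Z}-w_{n}$ and set
\begin{equation*}
\Phi_{n}(z)=\frac{G_{n}(z)\,e^{\frac{\alpha-\beta}{2}z^{2}}}{G_{n}'(i\tau_{n})\,\bigl(1-e^{2i\pi(i\tau_{n}-z)}\bigr)},\qquad f_{n}=T_{w_{n}}\Phi_{n}.
\end{equation*}
Here $e^{\frac{\alpha-\beta}{2}z^{2}}$ raises the quasi‑periodicity weight of $G_{n}$ from $\beta$ to $\alpha$, the denominator is $\mathbb{Z}$‑periodic, and $T_{w_{n}}$ preserves \eqref{funct-equa} because $\mathrm{Im}\,w_{n}\in\tfrac{\pi}{\alpha}\mathbb{Z}$; thus $f_{n}\in\mathcal{F}_{2,\nu}^{\alpha}(\mathbb{C}/\mathbb{Z})$. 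One has $f_{n}(z_{m})=0$ for $m\neq n$ (the points of $\widetilde{Z}-w_{n}$ other than $i\tau_{n}$ are zeros of $G_{n}$), and a residue computation (the factor $G_{n}'(i\tau_{n})$ cancelling) gives $\Phi_{n}(i\tau_{n})=\tfrac{1}{2i\pi}e^{-\frac{\alpha-\beta}{2}\tau_{n}^{2}}$, hence the key normalization $|f_{n}(z_{n})|e^{-\frac{\alpha}{2}|z_{n}|^{2}}=e^{-\frac{\alpha}{2}\tau_{n}^{2}}|\Phi_{n}(i\tau_{n})|\asymp 1$, uniformly in $n$. Given data with $(a_{n}e^{-\frac{\alpha}{2}|z_{n}|^{2}})_{n}\in\ell^{2}$, I set $F=\sum_{n}\tfrac{a_{n}}{f_{n}(z_{n})}\,f_{n}$, so that $F(z_{m})=a_{m}$ and $\bigl|a_{n}/f_{n}(z_{n})\bigr|\asymp|a_{n}|e^{-\frac{\alpha}{2}|z_{n}|^{2}}$.

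\emph{Step 3: norm estimate, and the main obstacle.} Writing $u_{n}(z)=|f_{n}(z)|e^{-\frac{\alpha}{2}|z|^{2}}$, the isometry of $T_{w_{n}}$ gives $u_{n}(z)=|\Phi_{n}(\zeta)|e^{-\frac{\alpha}{2}|\zeta|^{2}}$ with $\zeta=z-w_{n}$. Plugging in the bounds of Lemma \ref{estimatesg} for $G_{n}$ (in particular the uniform lower bound $|G_{n}'(i\tau_{n})|\gtrsim 1$), the quadratic exponents cancel and, after a brief look at the denominator $1-e^{2i\pi(i\tau_{n}-\zeta)}$ (bounded below away from $i\tau_{n}$, where $\Phi_{n}$ is in any case holomorphic, the pole being cancelled, and uniformly bounded), one gets $u_{n}(z)\lesssim e^{-c(\mathrm{Im}\,z-\mathrm{Im}\,z_{n})^{2}}$ for some $c>0$ independent of $n$. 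Since $\widetilde{Z}$ is uniformly close to $\Lambda_{\beta}$, each horizontal strip $[0,1)\times[t,t+1]$ meets boundedly many $z_{n}$, whence $\sup_{z}\sum_{n}u_{n}(z)<\infty$, and by $\mathbb{Z}$‑periodicity of $|\Phi_{n}(\cdot)|e^{-\frac{\alpha}{2}|\cdot|^{2}}$ also $\sup_{n}\int_{[0,1)\times\mathbb{R}}u_{n}\,dA<\infty$. A Cauchy--Schwarz/Schur estimate $|F(z)|^{2}e^{-\alpha|z|^{2}}\leq\bigl(\sum_{n}|a_{n}/f_{n}(z_{n})|^{2}u_{n}(z)\bigr)\bigl(\sum_{m}u_{m}(z)\bigr)$ then gives $\|F\|_{\mathcal{F}_{2,\nu}^{\alpha}(\mathbb{C}/\mathbb{Z})}^{2}\lesssim\sum_{n}|a_{n}|^{2}e^{-\alpha|z_{n}|^{2}}$ and, along the same lines, convergence of the series. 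The delicate part, I expect, is exactly this uniformity in $n$ under the constraint \eqref{funct-equa}: a reproducing‑kernel‑type atom $e^{(\alpha-\beta)\overline{z_{n}}z}(\cdots)$ would be Fock‑concentrated but not quasi‑periodic, while $\Phi_{n}$ is quasi‑periodic but has its Fock mass near the real axis rather than near $z_{n}$; only $T_{w_{n}}$ does both, and the residual vertical shift $i\tau_{n}$ can be absorbed precisely because $\tau_{n}$ lies in the bounded set $[0,\tfrac{\pi}{\alpha})$, making all constants from Lemma \ref{estimatesg} applied to $\widetilde{Z}-w_{n}$ independent of $n$. Closing the Schur estimate then hinges on the Gaussian $e^{-(\alpha-\beta)(\mathrm{Im}\,\zeta)^{2}}$ beating the linear term $e^{\gamma^{+}|\mathrm{Im}\,\zeta|}$ that the strip geometry forces into the bound for $G_{n}$.
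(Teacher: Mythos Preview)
Your proposal is correct and follows essentially the same route as the paper: enlarge $Z$ to a separated sequence uniformly close to $\Lambda_{\beta}$ with $\pi D^{+}(Z)<\beta<\alpha$, build Lagrange-type atoms from the product function $G$ of Lemma~\ref{estimatesg}, recenter each atom near $z_{n}$ via a Weyl shift by $w_{n}$ with $\mathrm{Im}\,w_{n}\in\tfrac{\pi}{\alpha}\mathbb{Z}$, and close with a Cauchy--Schwarz/Schur estimate driven by the Gaussian factor $e^{-(\alpha-\beta)(\mathrm{Im}\,\zeta)^{2}}$. The only cosmetic difference is packaging: the paper writes the truncated product $G_{n}$ and the accompanying exponential factor explicitly, whereas you apply Lemma~\ref{estimatesg} to the translated sequence $\widetilde{Z}-w_{n}$, divide out the single factor $(1-e^{2i\pi(i\tau_{n}-z)})$, and then apply $T_{w_{n}}$; up to constants these produce the same atom, and the uniformity in $n$ you need follows, as you note, from the fact that $\widetilde{Z}-w_{n}$ is uniformly close to $\Lambda_{\beta}$ with a bound $Q'=2Q+\pi/\alpha$ independent of $n$.
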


\begin{proof}
If $Z$ is separated sequence in $\Lambda(\mathbb{Z})$ with $D^{+}(Z)=\frac{%
\gamma }{\pi }$ and $\gamma <\alpha $. Then we can, if necessary, follow
Beurling's argument \cite[Lemma 4, pg. 353]{Beurling}, and add some points
to expand $Z$ to a separated sequence which is uniformly close to $i\frac{%
\pi }{\beta }\mathbb{Z}$, with $\beta \in (\gamma ,\alpha )$ (as in the
proof of sufficiency for sampling in the previous section, Beurling's
argument can be used, since the proofs of Lemma 6.2 and now Lemma 6.4 in 
\cite{seip1} extend to this setting, assuring that if one adds a point to an
interpolating sequence, what is left remains an interpolating sequence). We
may therefore assume that $Z$ is uniformly close to the sequence $i\frac{\pi 
}{\beta }\mathbb{Z}$, that $\beta <\alpha $ and $\nu =0$. Let $(a_{n})_{n\in 
\mathbb{Z}}$ be a given sequence such that 
\begin{equation*}
a_{k}e^{-\frac{\alpha }{2}|z_{k}|^{2}}\in \ell ^{2}(\mathbb{Z}).
\end{equation*}%
Define the function 
\begin{equation*}
G_{n,Z}:=G_{n}(z)=\left\{ 
\begin{matrix}
\prod_{k\neq n}(1-e^{2i\pi (z_{k}-z)})\times \prod_{k<0}(1-e^{2i\pi
(z-z_{k})}), & \text{ if }n\geq 0 \\ 
\prod_{k\geq n}(1-e^{2i\pi (z_{k}-z)})\times \prod_{k\neq n}(1-e^{2i\pi
(z-z_{k})}), & \text{ if }n<0%
\end{matrix}%
\right. \text{.}
\end{equation*}%
Next, for $z_{n}=x_{n}+iy_{n}$, we write $w_{n}=x_{n}+i\frac{\pi }{\alpha }%
\lfloor \frac{\alpha }{\pi }y_{n}\rfloor $. Then, since $G_{n}(z_{m})=\delta
_{n,m}$ we have 
\begin{equation}
F(z)=\sum_{n\in \mathbb{Z}}a_{n}e^{\alpha (z-z_{n})\overline{w_{n}}+\frac{%
\alpha }{2}\left( (z-w_{n})^{2}-(z_{n}-w_{n})^{2}\right) }\frac{G_{n}(z)}{%
G_{n}(z_{n})}  \label{func-interpolation}
\end{equation}%
satisfies $F(z_{m})=a_{m}$\ for all $z_{n}\in Z$. To see that it is an
interpolation function it just remains to show that $F\in \mathcal{F}_{2,\nu
}^{\alpha }\left( \mathbb{C}/\mathbb{Z}\right)$. Using the estimates of
Lemma \ref{estimatesg}, 
\begin{align*}
\left\vert G_{n}(z)\right\vert =& \left\vert
G_{n,Z-z_{n}}(z-z_{n})\right\vert \\
& \leq C\left\vert e^{-\frac{\beta }{2}(z-z_{n})^{2}}\right\vert \times e^{%
\frac{\beta }{2}|z-z_{n}|^{2}+\gamma ^{+}|\mathrm{Im}(z-z_{n})|} \\
& \leq Ce^{\beta (y-y_{n})^{2}+\gamma ^{+}|y-y_{n}|},\quad y=\mathrm{Im}(z)%
\text{ and }y_{n}=\mathrm{Im}(z_{n})
\end{align*}%
and 
\begin{align*}
\left\vert G_{n}(z_{n})\right\vert =& \left\vert e^{-\frac{\beta }{2}%
z_{n}^{2}}\right\vert \times \left\vert e^{\frac{\beta }{2}%
z_{n}^{2}}G_{n}(z_{n})\right\vert \\
& \geq C\left\vert e^{-\frac{\beta }{2}z_{n}^{2}}\right\vert \times e^{\frac{%
\beta }{2}|z_{n}|^{2}-\gamma ^{-}|\mathrm{Im}(z_{n})|}d(z_{n},Z\setminus
\{z_{n}\} \\
& \geq C\delta e^{\beta y_{n}^{2}-\gamma ^{-}|y_{n}|}\text{,}
\end{align*}%
where $\delta $ is the separation constant of $Z$. Using these estimates, 
\begin{align*}
\left\vert F(z)\right\vert e^{-\frac{\alpha }{2}|z|^{2}}& \leq \sum
|a_{n}|e^{-\frac{\alpha }{2}|w_{n}|^{2}}\left\vert e^{-\frac{\alpha }{2}%
|z-w_{n}|^{2}+\frac{\alpha }{2}(z-w_{n})^{2}}\right\vert \times \left\vert
e^{-\alpha z_{n}\overline{w_{n}}+\alpha |w_{n}|^{2}-\frac{\alpha }{2}%
(z_{n}-w_{n})^{2}}\times \frac{G_{n}(z)}{G_{n}(z_{n})}\right\vert \\
& \leq C\sum |a_{n}|e^{-\frac{\alpha }{2}|w_{n}|^{2}}e^{-(\alpha -\beta
)(y-y_{n})^{2}+\sigma _{1}|y|+\sigma _{2}|y_{n}|}\text{,}
\end{align*}%
for some positive constants $\sigma _{1}$ and $\sigma _{2}$. Since $Z$ is
uniformly close to the sequence $i\frac{\beta }{\pi }\mathbb{Z}$, we can
find another positive constant $\lambda $ such that 
\begin{equation*}
\left\vert F(z)\right\vert e^{-\frac{\alpha }{2}|z|^{2}}\leq \sum |a_{n}|e^{-%
\frac{\alpha }{2}|w_{n}|^{2}}e^{-\lambda (y-\frac{\beta }{\pi }n)^{2}}\text{.%
}
\end{equation*}%
This show that the series defined in \eqref{func-interpolation} converges
uniformly on every compact set to an entire function $F$. Furthermore, one
checks that $F$ satisfies the functional equation \eqref{funct-equa}, with $%
\nu =0$, and of finite norm in $\mathcal{F}_{2,\nu }^{\alpha }\left( \mathbb{%
C}/\mathbb{Z}\right)$. Thus, $F\in \mathcal{F}_{2,\nu }^{\alpha }\left( 
\mathbb{C}/\mathbb{Z}\right)$ and, by construction, $F(z_{n})=a_{n}$ for all 
$z_{n}\in Z$.
\end{proof}

We now show the necessary part of the interpolation result. We begin by
recalling some results which are straightforward analogues of the Fock space
case.

\begin{lemma}
If $Z$ is an interpolating sequence for $\mathcal{F}_{2,\nu }^{\alpha
}\left( \mathbb{C}/\mathbb{Z}\right) $, then it is separated.
\end{lemma}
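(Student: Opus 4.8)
The plan is to argue by contradiction, mimicking the standard argument from the Fock space case (e.g.\ the proof of the analogous statement in \cite{seip2}), but adapted to the geometry of the strip $\Lambda(\mathbb{Z})$. Suppose $Z=\{z_n\}$ is interpolating for $\mathcal{F}_{2,\nu}^{\alpha}\left(\mathbb{C}/\mathbb{Z}\right)$ but not separated. Then there exist two sequences of indices giving points $z_{n_k}$ and $z_{m_k}$ with $n_k\neq m_k$ and $\left\vert z_{n_k}-z_{m_k}\right\vert\to 0$. Using the translation invariance by $i\frac{\pi}{\alpha}\mathbb{Z}$ (the Weyl operators $T_w$ with $\mathrm{Im}(w)\in\frac{\pi}{\alpha}\mathbb{Z}$ are isometries of $\mathcal{F}_{2,\nu}^{\alpha}\left(\mathbb{C}/\mathbb{Z}\right)$ and preserve $N_2(Z,\alpha)$, as recorded in the excerpt after \eqref{weyl}), I would first replace $Z$ by a translate $Z-i\frac{\pi}{\alpha}\lfloor\frac{\alpha}{\pi}\mathrm{Im}(z_{n_k})\rfloor$ so that the colliding pair has bounded imaginary part, lying in a fixed compact set; this is exactly the device already used in the proof of Lemma \ref{samp-weak-uniqueness}. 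Note however that such a translate changes the quasi-periodicity factor $\nu$ to some $\kappa_k\in[0,1]$, so one works with the family $\mathcal{F}_{2,\kappa_k}^{\alpha}$; since $\kappa_k$ lies in a compact set one may pass to a subsequence with $\kappa_k\to\kappa$.

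The heart of the argument is the following: interpolation gives a uniform bound $N_2(Z,\alpha)<\infty$, and one exploits this for the two-point data that is $1$ at $z_{n_k}$ and $0$ at all other points of $Z$ (and similarly with $n_k$ and $m_k$ swapped). Concretely, the normalized reproducing kernel $k_{z_{n_k}}$ of $\mathcal{F}_{2,\kappa_k}^{\alpha}$ at the point $z_{n_k}$ — or rather its image under the interpolation operator — would produce a function $F_k\in\mathcal{F}_{2,\kappa_k}^{\alpha}$ with $F_k(z_{n_k})e^{-\frac{\alpha}{2}|z_{n_k}|^2}$ of order $1$, $F_k$ vanishing at $z_{m_k}$, and $\|F_k\|^2\le C\sum_{z\in Z}|F_k(z)|^2e^{-\alpha|z|^2}$. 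After normalizing and applying Montel's theorem together with the local uniform estimates on the reproducing kernel $K_{\mathcal{F}_{2,\nu}^{\alpha}\left(\mathbb{C}/\mathbb{Z}\right)}$ (which, by the explicit theta-function formula in Section 2.1, is continuous and nonvanishing on the diagonal), one extracts a limit function $F\in\mathcal{F}_{2,\kappa}^{\alpha}$ which is nonzero at the limit point $\zeta=\lim z_{n_k}=\lim z_{m_k}$ but whose value and derivative are constrained in a contradictory way: the interpolation inequality forces $|F_k(z_{m_k})|e^{-\frac{\alpha}{2}|z_{m_k}|^2}$ to be bounded below (since $z_{m_k}$ is a point of $Z$ and $F_k$ is, up to normalization, concentrated near $z_{n_k}\approx z_{m_k}$), while the interpolation condition $F_k(z_{m_k})=0$ forces it to vanish. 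More precisely, the contradiction is cleanest if phrased via the two functions interpolating $(\delta_{n,n_k})$ and $(\delta_{n,m_k})$: their difference $H_k$ satisfies $H_k(z_{n_k})e^{-\frac{\alpha}{2}|z_{n_k}|^2}$ and $H_k(z_{m_k})e^{-\frac{\alpha}{2}|z_{m_k}|^2}$ of order $1$ with opposite behaviour, yet $\|H_k\|\le 2C^{1/2}(\text{order }1)$, so $H_k/\|H_k\|$ has a convergent subsequence to a nonzero $H\in\mathcal{F}_{2,\kappa}^{\alpha}$; evaluating at the common limit point and using $|z_{n_k}-z_{m_k}|\to 0$ together with the Lipschitz estimate $\big|\,|F(a)|e^{-\frac{\alpha}{2}|a|^2}-|F(b)|e^{-\frac{\alpha}{2}|b|^2}\,\big|\le c|a-b|$ from \cite[Lemma 3.1]{seip1} (already invoked in the excerpt) yields $1\lesssim |H(\zeta)|e^{-\frac{\alpha}{2}|\zeta|^2}-|H(\zeta)|e^{-\frac{\alpha}{2}|\zeta|^2}=0$, the desired contradiction.

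The main obstacle, compared to the classical Fock space argument, is precisely the loss of translation invariance: one cannot freely move the colliding pair to the origin, only shift it by elements of $i\frac{\pi}{\alpha}\mathbb{Z}$, which both leaves the pair ranging over a fixed compact band rather than sitting at a single point, and shifts the quasi-periodicity parameter $\nu$. I expect to handle this exactly as in Lemma \ref{samp-weak-uniqueness}: bound the relevant points to a compact set after an admissible Weyl translation, pass to subsequences so the base point, the limit of the parameters $\kappa_k$, and the normalized functions all converge, and use that the whole family $\{\mathcal{F}_{2,\kappa}^{\alpha}:\kappa\in[0,1]\}$ enjoys uniform (in $\kappa$) reproducing-kernel and Lipschitz estimates, which follows from the explicit theta-function expression for the kernel given in Section 2.1. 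With these normalizations in place, the remaining steps — extraction of the limit function, its membership in $\mathcal{F}_{2,\kappa}^{\alpha}$ via Fatou/Montel, and the final numerical contradiction — are routine and carry over essentially verbatim from the planar case.
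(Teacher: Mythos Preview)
Your proposal buries the correct one-line argument under machinery that is not needed, and in the process introduces two concrete errors.

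The paper's proof is simply ``As in \cite[Lemma 5.1]{seip1}'', and that argument is direct, with no contradiction, no translation, and no compactness: for any two distinct $z_j,z_k\in Z$, interpolate the single datum $a_j=e^{\frac{\alpha}{2}|z_j|^{2}}$, $a_n=0$ for $n\neq j$. This produces $F\in\mathcal{F}_{2,\nu}^{\alpha}(\mathbb{C}/\mathbb{Z})$ with $\Vert F\Vert\le N_2(Z,\alpha)$, $|F(z_j)|e^{-\frac{\alpha}{2}|z_j|^{2}}=1$ and $F(z_k)=0$. The Lipschitz estimate of \cite[Lemma 3.1]{seip1} (which you yourself invoke) then gives $1\le c\,N_2(Z,\alpha)\,|z_j-z_k|$, hence $|z_j-z_k|\ge (c\,N_2(Z,\alpha))^{-1}$. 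No translation is required, because the Lipschitz constant depends only on $\Vert F\Vert$, which is already uniformly controlled by the interpolation constant; the argument is entirely local and works at the original pair of points.

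Two specific problems with your write-up. First, the admissible Weyl operator $T_w$ with $\mathrm{Im}(w)\in\frac{\pi}{\alpha}\mathbb{Z}$ does \emph{not} change the parameter $\nu$; that is exactly the point of the remark following \eqref{weyl}. The shift of $\nu$ to some $\kappa_n$ in the paper occurs only when one translates by a \emph{non}-admissible amount (as in the proof of Theorem~\ref{SampInf}), so your whole $\kappa_k\to\kappa$ subsequence discussion is based on a misreading. Second, and more seriously, your ``cleanest'' version via $H_k=F_k-G_k$ does not yield a contradiction: the estimate from \cite[Lemma 3.1]{seip1} controls the \emph{modulus} $|H_k(z)|e^{-\frac{\alpha}{2}|z|^{2}}$, and this quantity equals $1$ at both $z_{n_k}$ and $z_{m_k}$, so Lipschitz continuity is trivially satisfied and nothing is contradicted. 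The contradiction only appears if you keep a single function $F_k$ with weighted value $1$ at one point and $0$ at the other---and once you do that, the Montel extraction, the limiting $H$, and the translation to a compact band are all superfluous.
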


\begin{proof}
As in \cite[Lemma 5.1]{seip1}.
\end{proof}

\begin{lemma}
\label{weak-cv} Suppose that $Z_{n}\rightarrow Z$ weakly. Then $N_{2
}(Z)\leqq \lim \inf N_{2}\left( Z_{n}\right) $.
\end{lemma}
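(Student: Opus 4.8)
The plan is to run the weak-semicontinuity argument for interpolation constants from \cite[Section~5]{seip1}, adapted to the weighted quantities $\|\cdot\|_{2,\alpha}$ and to the fact that weak set convergence only controls the configurations on compacta. We may assume $L:=\liminf N_{2}(Z_{n})<\infty$ and, after passing to a subsequence, that $N_{2}(Z_{n})\to L$. Since $N_{2}(Z)$ is the smallest admissible constant in (\ref{stab}), it is enough to show: for every data sequence $(a_{k})$ on $Z$ with $\big(a_{k}e^{-\frac{\alpha}{2}|z_{k}|^{2}}\big)_{k}\in\ell^{2}(\mathbb{Z})$ there is $F\in\mathcal{F}_{2,\nu}^{\alpha}\left(\mathbb{C}/\mathbb{Z}\right)$ with $F(z_{k})=a_{k}$ for all $k$ and $\|F\|\le L\,\|F|Z\|_{2,\alpha}$.

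First I would treat a finitely supported data sequence, supported at $z_{j_{1}},\dots,z_{j_{m}}$, fixing a compact $K$ that contains these points in its interior. Since $Z_{n}\to Z$ weakly, for $n$ large the Fr\'{e}chet distance $[Z_{n}\cap K,Z\cap K]$ is arbitrarily small, so one may choose distinct $z_{i}^{(n)}\in Z_{n}$ with $z_{i}^{(n)}\to z_{j_{i}}$. Consider the data on $Z_{n}$ equal to $a_{j_{i}}$ at $z_{i}^{(n)}$ and $0$ at every other point of $Z_{n}$; its weighted norm $\big(\sum_{i}|a_{j_{i}}|^{2}e^{-\alpha|z_{i}^{(n)}|^{2}}\big)^{1/2}$ tends to $\|F|Z\|_{2,\alpha}$. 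From $N_{2}(Z_{n})\le L+1/n$ (valid for $n$ large) we obtain $F_{n}\in\mathcal{F}_{2,\nu}^{\alpha}\left(\mathbb{C}/\mathbb{Z}\right)$ interpolating this data with $\|F_{n}\|\le(L+1/n)\big(\sum_{i}|a_{j_{i}}|^{2}e^{-\alpha|z_{i}^{(n)}|^{2}}\big)^{1/2}$, so $\sup_{n}\|F_{n}\|<\infty$. Being bounded in the Hilbert space $\mathcal{F}_{2,\nu}^{\alpha}\left(\mathbb{C}/\mathbb{Z}\right)$, hence locally uniformly bounded as holomorphic functions, a subsequence of $(F_{n})$ converges weakly and locally uniformly to some $F$; then $F$ satisfies (\ref{funct-equa}), lies in $\mathcal{F}_{2,\nu}^{\alpha}\left(\mathbb{C}/\mathbb{Z}\right)$, and $\|F\|\le\liminf\|F_{n}\|\le L\,\|F|Z\|_{2,\alpha}$ by weak lower semicontinuity of the norm. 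To recover the values, split $|F_{n}(z_{i}^{(n)})-F(z_{j_{i}})|\le|F_{n}(z_{i}^{(n)})-F_{n}(z_{j_{i}})|+|F_{n}(z_{j_{i}})-F(z_{j_{i}})|$: the second term goes to $0$ by locally uniform convergence, the first by the Lipschitz-type bound for the weighted modulus of functions with uniformly bounded sup-norm (the estimate from \cite[Lemma~3.1]{seip1} already used above) together with $z_{i}^{(n)}\to z_{j_{i}}$. Since $F_{n}(z_{i}^{(n)})=a_{j_{i}}$, we conclude $F(z_{j_{i}})=a_{j_{i}}$.

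For general admissible data $(a_{k})$, I would apply the finitely supported case to the truncations, producing $F^{(N)}$ interpolating the values with $|k|\le N$ and satisfying $\|F^{(N)}\|\le L\,\|F|Z\|_{2,\alpha}$ (the truncated data norm being dominated by $\|F|Z\|_{2,\alpha}$). A weakly convergent subsequence of $(F^{(N)})$ has a limit $F$ with $\|F\|\le L\,\|F|Z\|_{2,\alpha}$; since point evaluation on $\mathcal{F}_{2,\nu}^{\alpha}\left(\mathbb{C}/\mathbb{Z}\right)$ is weakly continuous and $F^{(N)}(z_{k})=a_{k}$ once $N\ge|k|$, we get $F(z_{k})=a_{k}$ for every $k$. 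Thus $L$ is admissible in (\ref{stab}) for $Z$, i.e. $N_{2}(Z)\le L=\liminf N_{2}(Z_{n})$.

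The only genuinely delicate point is that weak set convergence controls the point configurations only on compact sets, which forces the data transfer between $Z$ and $Z_{n}$ to be carried out for finitely supported sequences first; everything else is the routine normal-families extraction together with the quantitative continuity of weighted point evaluations already available in the paper.
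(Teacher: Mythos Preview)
Your argument is correct and is precisely the weak-semicontinuity scheme of \cite[Lemma~5.2]{seip1} that the paper invokes; you have simply written out the details that the paper omits by reference. The only cosmetic point is your use of the symbol $\|F|Z\|_{2,\alpha}$ for the data norm $\big(\sum_{k}|a_{k}|^{2}e^{-\alpha|z_{k}|^{2}}\big)^{1/2}$ before the interpolant $F$ has actually been constructed, but this is harmless since the two quantities coincide once $F(z_{k})=a_{k}$.
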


\begin{proof}
As in \cite[Lemma 5.2]{seip1}.
\end{proof}

For fixed $z$, consider%
\begin{equation*}
\varrho _{2}(z,Z)=\sup_{F}e^{-\frac{\alpha }{2}|z|^{2}}|F(z)|
\end{equation*}%
where the supremum is taken over all functions $F$ for which $F(\zeta )=0$, $%
\zeta \in Z$, and $\Vert F\Vert _{\mathcal{F}_{2,\nu }^{\alpha }\left( 
\mathbb{C}/\mathbb{Z}\right) }\leq 1$.

\begin{lemma}
\label{lem-sup} If $Z$ is an interpolating sequence for $\mathcal{F}_{2,\nu
}^{\alpha }\left( \mathbb{C}/\mathbb{Z}\right) $, then $\varrho
_{2}(z_{0},Z)>0$ when $z_{0}\notin Z$.
\end{lemma}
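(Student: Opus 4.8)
The statement claims that if $Z$ is interpolating for $\mathcal{F}_{2,\nu}^{\alpha}(\mathbb{C}/\mathbb{Z})$ and $z_0 \notin Z$, then $\varrho_2(z_0, Z) > 0$; that is, there exists a function $F$ in the unit ball of $\mathcal{F}_{2,\nu}^{\alpha}(\mathbb{C}/\mathbb{Z})$ which vanishes on $Z$ but with $F(z_0) \neq 0$. This is the direct analogue of Lemma 5.3 in \cite{seip1}, and the plan is to follow that scheme, using the interpolation hypothesis to produce the required function directly. The key idea is that interpolation at the \emph{enlarged} set $Z \cup \{z_0\}$ must still be possible — or at least one can produce a function vanishing on $Z$ and taking a prescribed nonzero value at $z_0$ — so one can prescribe the data $(a_\zeta)_{\zeta \in Z \cup \{z_0\}}$ by $a_{z_0} = e^{\frac{\alpha}{2}|z_0|^2}$ and $a_\zeta = 0$ for $\zeta \in Z$.

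Concretely, first I would check that $Z \cup \{z_0\}$ is still separated: since $Z$ is separated (by the preceding lemma) and $z_0 \notin Z$, the only issue is that $z_0$ might be arbitrarily close to $Z$, but since $Z$ is closed as a discrete separated set and $z_0 \notin Z$, we have $d(z_0, Z) > 0$, so $Z \cup \{z_0\}$ is separated. Next, one needs that $Z \cup \{z_0\}$ is still an interpolating sequence; this follows because adding a single point to an interpolating sequence keeps it interpolating (the analogue of Lemma 6.4 in \cite{seip1}, which the excerpt has already invoked for exactly this kind of manipulation in the proof of sufficiency for interpolation). With $Z \cup \{z_0\}$ interpolating, apply the definition of interpolation to the $\ell^2$-admissible data sequence that is $e^{\frac{\alpha}{2}|z_0|^2}$ at $z_0$ and $0$ on $Z$: this is admissible since $(a_\zeta e^{-\frac{\alpha}{2}|\zeta|^2})$ equals $1$ at $z_0$ and $0$ elsewhere, hence lies in $\ell^2$. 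One obtains $G \in \mathcal{F}_{2,\nu}^{\alpha}(\mathbb{C}/\mathbb{Z})$ with $G(z_0) = e^{\frac{\alpha}{2}|z_0|^2}$, $G|_Z = 0$, and $\|G\|_{\mathcal{F}_{2,\nu}^{\alpha}(\mathbb{C}/\mathbb{Z})} \leq N_2(Z \cup \{z_0\}, \alpha) \cdot \|G| (Z\cup\{z_0\})\|_{2,\alpha} = N_2(Z \cup \{z_0\}, \alpha)$. Normalizing, $F := G / \|G\|$ lies in the unit ball, vanishes on $Z$, and satisfies $e^{-\frac{\alpha}{2}|z_0|^2}|F(z_0)| = 1/\|G\| \geq 1/N_2(Z\cup\{z_0\},\alpha) > 0$. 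Hence $\varrho_2(z_0, Z) \geq 1/N_2(Z \cup \{z_0\}, \alpha) > 0$.

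The main obstacle is justifying the claim that a single point can be adjoined to an interpolating sequence without destroying interpolation — i.e.\ that $N_2(Z \cup \{z_0\}, \alpha) < \infty$. In \cite{seip1} this is Lemma 6.4, and the excerpt explicitly asserts that its proof ``extends to this setting''; the argument there is a functional-analytic perturbation: if $Z$ is interpolating, the restriction map onto the weighted $\ell^2$ space is surjective with a bounded right inverse, and surjectivity (hence the interpolation constant bound) is stable under adjoining one coordinate because one can interpolate the data on $Z$ by a bounded function and then correct the single value at $z_0$ using a function vanishing on $Z$ — which is precisely where one needs $\varrho_2(z_0, Z) > 0$, creating an apparent circularity. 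The way \cite{seip1} breaks this is by instead using that for a \emph{separated} interpolating sequence one can explicitly subtract off: pick any $G_0 \in \mathcal{F}_{2,\nu}^{\alpha}$ that interpolates the $Z$-data, note $G_0(z_0)$ takes some value $b$, and then it suffices to find \emph{any} nonzero element of $\mathcal{F}_{2,\nu}^{\alpha}$ vanishing on $Z$ to adjust the value at $z_0$. Such an element exists, for instance, as $G_0$ itself applied to the shifted data, or — cleanest — by taking the interpolating function for the data which is $1$ at some fixed point $\zeta_1 \in Z$ and $0$ on $Z \setminus \{\zeta_1\}$, multiplied by $(1 - e^{2\pi i(\zeta_1 - z)})$ to kill the value at $\zeta_1$ too; this product stays in $\mathcal{F}_{2,\nu}^{\alpha}(\mathbb{C}/\mathbb{Z})$ by the growth estimates of Lemma \ref{estimatesg}-type factors and is generically nonzero at $z_0$ (if it happens to vanish at $z_0$, use a different $\zeta_1$ or a translate — there are infinitely many choices and a nonzero entire function cannot vanish at $z_0$ for all of them). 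Once one such function exists, the circularity is broken and the adjunction and the conclusion follow as above.
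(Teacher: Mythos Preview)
Your route through ``$Z\cup\{z_0\}$ is interpolating'' is not the paper's, and the circularity you correctly spot is not convincingly broken. In the paper, the lemma that bounds $N_2(Z\cup\{z_0\})$ is proved \emph{after} Lemma~\ref{lem-sup}, precisely using $\varrho_2(z_0,Z)>0$ as an input; so you cannot appeal to it here. Your attempted fix---build a nonzero $H$ vanishing on $Z$ by interpolating $1$ at some $\zeta_1$ and $0$ on $Z\setminus\{\zeta_1\}$, then multiply by $(1-e^{2\pi i(\zeta_1-z)})$---has two problems. First, that factor has modulus $\asymp e^{2\pi\,\mathrm{Im}(z)}$ as $\mathrm{Im}(z)\to+\infty$, so the product need not remain in $\mathcal{F}_{2,\nu}^{\alpha}(\mathbb{C}/\mathbb{Z})$; invoking ``Lemma~\ref{estimatesg}-type factors'' does not justify this. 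Second, even granting membership, ``try a different $\zeta_1$'' is not an argument: the functions you build for different $\zeta_1$ are different entire functions, and there is no reason they cannot all vanish at $z_0$.

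The paper's proof is a two-line argument that sidesteps the whole detour. An interpolating sequence is never a set of uniqueness (if it were, the restriction map would be a bounded bijection onto weighted $\ell^2$, hence $Z$ would be sampling; but removing a point keeps it sampling while interpolation produces a nonzero function vanishing on $Z\setminus\{z_1\}$, a contradiction). So take any nonzero $F\in\mathcal{F}_{2,\nu}^{\alpha}(\mathbb{C}/\mathbb{Z})$ vanishing on $Z$. If $F(z_0)=0$, \emph{divide} by the appropriate power of $e^{2i\pi z}-e^{2i\pi z_0}$: this factor is $\mathbb{Z}$-periodic (so the functional equation is preserved), its only zero in the strip is at $z_0$ (so the zeros on $Z$ survive), and it is bounded below away from $z_0+\mathbb{Z}$ (so the quotient stays in the space). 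The resulting function does not vanish at $z_0$, giving $\varrho_2(z_0,Z)>0$ directly. This division trick is exactly the idea missing from your resolution, and once you have it the whole $Z\cup\{z_0\}$ machinery becomes unnecessary.
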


\begin{proof}
Since $Z$ is an interpolating sequence, then it is not a set of uniqueness.
Take $F\in \mathcal{F}_{2,\nu }^{\alpha }\left( \mathbb{C}/\mathbb{Z}\right) 
$ not identically zero and vanishing on $Z$. Then by dividing by a non
negative power of $e^{2i\pi z}-e^{2i\pi z_{0}}$, which preserves membership
in $\mathcal{F}_{2,\nu }^{\alpha }\left( \mathbb{C}/\mathbb{Z}\right) $, we
arrive at the desired result.
\end{proof}

\begin{lemma}
For $z_{0}\notin Z$, we have 
\begin{equation*}
N_{2 }\left( Z\cup \left\{ z_{0}\right\} \right) \leq \frac{1+2N_{2 }(Z)}{%
\varrho _{2 }\left( z_{0},Z\right) }\text{.}
\end{equation*}
\end{lemma}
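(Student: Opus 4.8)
The plan is to follow the standard one-point-enlargement argument, the direct analogue of the corresponding step in the Fock space \cite{seip1}: interpolate the prescribed data on $Z$ and then add a single correction term built from the extremal function defining $\varrho_2(z_0,Z)$.

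We may assume $N_2(Z)<\infty$, i.e.\ that $Z$ is interpolating, since otherwise the right-hand side is $+\infty$ and there is nothing to prove; in particular $Z$ is not a set of uniqueness, so $\varrho_2(z_0,Z)>0$ by Lemma \ref{lem-sup}. I would first observe that the supremum defining $\varrho_2(z_0,Z)$ is attained. Indeed, if $(\psi_n)$ is a maximizing sequence with $\|\psi_n\|_{\mathcal{F}_{2,\nu}^\alpha\left(\mathbb{C}/\mathbb{Z}\right)}\le 1$ and $\psi_n|_Z=0$, the reproducing-kernel pointwise estimate $|G(z)|e^{-\frac{\alpha}{2}|z|^2}\le \big(K_{\mathcal{F}_{2,\nu}^\alpha\left(\mathbb{C}/\mathbb{Z}\right)}(z,z)\,e^{-\alpha|z|^2}\big)^{1/2}\|G\|$ shows the $\psi_n$ are uniformly bounded on compacta, so Montel's theorem yields a subsequence converging locally uniformly to some $\psi$ with $\|\psi\|\le 1$, $\psi|_Z=0$, and $e^{-\frac{\alpha}{2}|z_0|^2}|\psi(z_0)|=\varrho_2(z_0,Z)$; after multiplying by a unimodular constant we may take $\psi(z_0)=\varrho_2(z_0,Z)\,e^{\frac{\alpha}{2}|z_0|^2}>0$. (One could alternatively work with a near-extremal $\psi$ satisfying $e^{-\frac{\alpha}{2}|z_0|^2}|\psi(z_0)|\ge\varrho_2(z_0,Z)-\varepsilon$ and pass to a local-uniform limit of the resulting interpolants as $\varepsilon\to0$.)

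For the construction, let $(a_k)_{z_k\in Z\cup\{z_0\}}$ be given with $\big(a_ke^{-\frac{\alpha}{2}|z_k|^2}\big)\in\ell^2$, and write $A:=\big(\sum_{z_k\in Z\cup\{z_0\}}|a_k|^2e^{-\alpha|z_k|^2}\big)^{1/2}$. Since $Z$ is interpolating with constant $N_2(Z)$, choose $F_1\in\mathcal{F}_{2,\nu}^\alpha\left(\mathbb{C}/\mathbb{Z}\right)$ with $F_1(z_k)=a_k$ for all $z_k\in Z$ and $\|F_1\|\le N_2(Z)\big(\sum_{z_k\in Z}|a_k|^2e^{-\alpha|z_k|^2}\big)^{1/2}\le N_2(Z)A$, and set
\[ F(z):=F_1(z)+\frac{a_0-F_1(z_0)}{\psi(z_0)}\,\psi(z). \]
Because $\psi$ vanishes on $Z$, $F(z_k)=F_1(z_k)=a_k$ for every $z_k\in Z$, while $F(z_0)=F_1(z_0)+\big(a_0-F_1(z_0)\big)=a_0$; thus $F$ interpolates the prescribed values on $Z\cup\{z_0\}$. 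It remains to estimate $\|F\|$. By the triangle inequality, $\|\psi\|\le1$, and $|\psi(z_0)|=\varrho_2(z_0,Z)\,e^{\frac{\alpha}{2}|z_0|^2}$,
\[ \|F\|\le \|F_1\|+\frac{|a_0-F_1(z_0)|\,e^{-\frac{\alpha}{2}|z_0|^2}}{\varrho_2(z_0,Z)}\le \|F_1\|+\frac{|a_0|e^{-\frac{\alpha}{2}|z_0|^2}+|F_1(z_0)|e^{-\frac{\alpha}{2}|z_0|^2}}{\varrho_2(z_0,Z)}. \]
Here $|a_0|e^{-\frac{\alpha}{2}|z_0|^2}\le A$; the point value $|F_1(z_0)|e^{-\frac{\alpha}{2}|z_0|^2}$ is controlled by $\|F_1\|\le N_2(Z)A$ via the embedding $\mathcal{F}_{2,\nu}^\alpha\left(\mathbb{C}/\mathbb{Z}\right)\hookrightarrow\mathcal{F}_{\infty,\nu}^\alpha\left(\mathbb{C}/\mathbb{Z}\right)$; and $\|F_1\|\le N_2(Z)A$. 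Combining the three contributions exactly as in the Fock-space computation of \cite{seip1}, they add up to $N_2(Z)A+A+N_2(Z)A$ over $\varrho_2(z_0,Z)$, so that
\[ \|F\|\le \frac{1+2N_2(Z)}{\varrho_2(z_0,Z)}\,A. \]
Since $F$ interpolates the data, $\|F|Z\cup\{z_0\}\|_{2,\alpha}=A$, and therefore $N_2(Z\cup\{z_0\})\le (1+2N_2(Z))/\varrho_2(z_0,Z)$.

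The argument is short, and I do not expect a genuine obstacle: the two points requiring a little care are the existence of the extremal $\psi$ — which is the normal-families/Montel argument above, resting on the reproducing-kernel pointwise bound, exactly as for the classical Fock space — and the bookkeeping of the final constant, which one copies from the analogous lemma in \cite{seip1}.
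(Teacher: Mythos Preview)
Your proposal is correct and is precisely the approach the paper intends: the paper's own proof consists solely of the reference ``As in the case of the Fock space \cite[Lemma 8.3]{seip1}'', and you have written out that standard one-point-enlargement argument in detail. The only point to watch in the bookkeeping is that the pointwise bound $|F_1(z_0)|e^{-\frac{\alpha}{2}|z_0|^2}\le \|F_1\|$ and the inequality $\varrho_2\le 1$ implicitly use that the embedding constant of $\mathcal{F}_{2,\nu}^{\alpha}(\mathbb{C}/\mathbb{Z})\hookrightarrow\mathcal{F}_{\infty,\nu}^{\alpha}(\mathbb{C}/\mathbb{Z})$ is at most $1$; this is a normalization issue that does not affect the argument or its application, but you may wish to note it explicitly rather than hide it in ``exactly as in \cite{seip1}''.
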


\begin{proof}
As in the case of the Fock space \cite[Lemma 8.3]{seip1}.
\end{proof}

\begin{lemma}
Given $\delta _{0},\ell _{0}$, and $\alpha $, there exists a positive
constant $C=C\left( \delta _{0},\ell _{0},\alpha \right) $ such that if $%
N_{2 }(Z)\leqq \ell _{0}$ and $d(z,Z)\geqq \delta _{0}$, then 
\begin{equation*}
\varrho _{2 }(z,Z)\geq C\text{.}
\end{equation*}
\end{lemma}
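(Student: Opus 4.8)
The plan is to adapt the argument from \cite[Lemma 8.4]{seip1} to our cylindrical setting, exploiting the quasi-invariance under the Weyl translations $T_w$ with $\mathrm{Im}(w)\in\frac{\pi}{\alpha}\mathbb{Z}$. Fix $z$ with $d(z,Z)\geq\delta_0$ and $N_2(Z)\leq\ell_0$. First I would normalize: writing $z=x+iy$ and choosing $w=x+i\frac{\pi}{\alpha}\lfloor\frac{\alpha}{\pi}y\rfloor$, we have $\mathrm{Im}(w)\in\frac{\pi}{\alpha}\mathbb{Z}$, so $T_w$ is a unitary isomorphism of $\mathcal{F}_{2,\nu}^{\alpha}\left(\mathbb{C}/\mathbb{Z}\right)$; replacing $Z$ by $Z-w$ and $z$ by $z-w$, and using that both $N_2$ and $\varrho_2$ are invariant under such translations (the latter because $T_w$ permutes the competing functions $F$ and preserves both the vanishing condition and the norm), we may assume $z=\kappa$ lies in the bounded segment $\{x+iy:0\leq x<1,\ 0\leq y\leq\frac{\pi}{\alpha}\}$, with $d(\kappa,Z)\geq\delta_0$ still.

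The core of the argument is then a normal-families compactness argument on this bounded region. Suppose, for contradiction, that no such uniform constant $C$ exists. Then there is a sequence $Z_n$ with $N_2(Z_n)\leq\ell_0$ and points $\kappa_n$ in the fixed bounded segment with $d(\kappa_n,Z_n)\geq\delta_0$, yet $\varrho_2(\kappa_n,Z_n)\to0$. Passing to a subsequence, $\kappa_n\to\kappa$ in the segment, and $Z_n\to Z_\infty$ weakly; one checks $d(\kappa,Z_\infty)\geq\delta_0$, so in particular $\kappa\notin Z_\infty$. By Lemma \ref{weak-cv}, $N_2(Z_\infty)\leq\liminf N_2(Z_n)\leq\ell_0<\infty$, so $Z_\infty$ is an interpolating sequence; hence by Lemma \ref{lem-sup}, $\varrho_2(\kappa,Z_\infty)>0$. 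To reach the contradiction I would produce, from an extremal (or near-extremal) function $F$ for $\varrho_2(\kappa,Z_\infty)$, a competitor for each $\varrho_2(\kappa_n,Z_n)$: since $Z_n\to Z_\infty$ weakly and $F$ vanishes on $Z_\infty$, the function $F$ vanishes \emph{approximately} on $Z_n\cap K$ for any compact $K$, and one multiplies by a product of factors $\frac{e^{2i\pi z}-e^{2i\pi\zeta_k}}{e^{2i\pi\kappa}-e^{2i\pi\zeta_k}}$ over the (finitely many, by separation and the norm bound) nearby points $\zeta_k\in Z_n$ to correct the vanishing without killing the value at $\kappa_n$; this uses that the reproducing kernel bound of \cite[Lemma 3.1]{seip1} and the membership-preserving division by $e^{2i\pi z}-e^{2i\pi\zeta}$ (as in Lemma \ref{lem-sup}) are available here. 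Normalizing this competitor to have norm $\leq1$ and evaluating at $\kappa_n$ yields $\liminf\varrho_2(\kappa_n,Z_n)>0$, contradicting $\varrho_2(\kappa_n,Z_n)\to0$.

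The main obstacle I anticipate is the last step: controlling the norm of the corrected competitor function uniformly in $n$, and ensuring the correction factors do not blow up near $\kappa_n$. Because $N_2(Z_n)\leq\ell_0$ forces $Z_n$ to be a finite union of a bounded number of $\delta$-separated sets with $\delta$ bounded below (by Lemma 7.1-type arguments in \cite{seip1}), only boundedly many points of $Z_n$ lie within a fixed distance of $\kappa_n$; combined with $d(\kappa_n,Z_n)\geq\delta_0$, each correction factor is bounded above and below, so the product over a bounded index set is controlled. The norm of the product $F$ times a bounded entire multiplier is handled via the standard estimate that multiplication by $1-e^{2i\pi(z-\zeta)}$ followed by the Fock weight is bounded on $\mathcal{F}_{2,\nu}^{\alpha}\left(\mathbb{C}/\mathbb{Z}\right)$ up to a constant depending only on $\alpha$; the remaining technical care is that the weak convergence $Z_n\to Z_\infty$ must be upgraded, on the relevant fixed compact neighborhood of $\kappa$, to an honest matching of points within $o(1)$, which is exactly the content of weak convergence for uniformly separated sets. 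Once these uniform bounds are in place, the contradiction closes and the proof is complete.
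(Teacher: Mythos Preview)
Your setup---the normalization via $T_w$ with $\mathrm{Im}(w)\in\frac{\pi}{\alpha}\mathbb{Z}$, the contradiction argument with sequences $Z_n$ and $\kappa_n$, the extraction of weak limits $Z_\infty$ and $\kappa$, and the appeal to Lemmas~\ref{weak-cv} and~\ref{lem-sup}---is correct and matches the paper exactly. The gap is in the last step, where you try to turn the extremal $F$ for $\varrho_2(\kappa,Z_\infty)$ into a competitor for $\varrho_2(\kappa_n,Z_n)$.

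Your correction procedure cannot work, for two reasons. First, a competitor for $\varrho_2(\kappa_n,Z_n)$ must vanish on \emph{all} of $Z_n$, not just on $Z_n\cap K$ for a fixed compact $K$. Weak convergence gives you no relation between $Z_n$ and $Z_\infty$ outside compacts, so $F$ need not be close to zero at those points, and multiplying by finitely many factors only adds finitely many zeros. Second, multiplication by $e^{2i\pi z}-e^{2i\pi\zeta}$ is \emph{not} bounded on $\mathcal{F}_{2,\nu}^{\alpha}(\mathbb{C}/\mathbb{Z})$: this factor grows like $e^{2\pi|\mathrm{Im}(z)|}$ as $\mathrm{Im}(z)\to-\infty$, so the product leaves the space. (Lemma~\ref{lem-sup} uses \emph{division} by such a factor, which is harmless; multiplication is not.)

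The paper (following Beurling) avoids both issues by exploiting the hypothesis $N_2(Z_n)\leq\ell_0$ directly: since $Z_n$ is interpolating with constant $\leq\ell_0$, one can solve the interpolation problem $g_n|_{Z_n}=f|_{Z_n}$ with $\|g_n\|\leq\ell_0\,\|f|Z_n\|_{2,\alpha}=:\ell_0\varepsilon_n$. One checks $\varepsilon_n\to0$ (split the sum defining $\|f|Z_n\|_{2,\alpha}$ into a large compact, where weak convergence and the Lipschitz estimate give smallness, and its complement, where uniform separation and $f\in\mathcal{F}_{2,\nu}^{\alpha}$ give smallness). Then $f_n=(f-g_n)/(\|f\|+\ell_0\varepsilon_n)$ is a genuine competitor: it has norm $\leq1$, vanishes on all of $Z_n$, and $|f_n(\kappa_n)|e^{-\frac{\alpha}{2}|\kappa_n|^2}\to A/\|f\|>0$, giving the contradiction. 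The missing idea in your approach is precisely this use of the interpolation property of $Z_n$ to produce a global (rather than local) correction.
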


\begin{proof}
Comparing with the planar setting, the main difference here is the loss of
translation invariance. Nevertheless, we can still adapt the same idea to
obtain the result. We argue by contra-positive: assume the existence of a
sequence $Z_{n}$ of interpolating sets for $\mathcal{F}_{2,\nu }^{\alpha
}\left( \mathbb{C}/\mathbb{Z}\right) $ and a sequence $z_{n}$ of points in $%
\mathbb{C}$ such that 
\begin{equation*}
N_{2}(Z_{n})\leq \ell _{0},\text{ and }d(z_{n},Z_{n})\geq \delta _{0}
\end{equation*}%
and $\varrho _{2}(z_{n},Z_{n})\rightarrow 0$. Since the Weyl operator $T_{a}$
with $\mathrm{Im}(a)\in \frac{\pi }{\alpha }\mathbb{Z}$ preserves $\mathcal{F%
}_{2,\nu }^{\alpha }\left( \mathbb{C}/\mathbb{Z}\right) $, we may assume
that, $z_{n}$ belongs to the set $[0,1]\times \lbrack 0,\frac{\pi }{\alpha }%
] $. Thus, by considering a subsequence if necessary, we may assume that $%
z_{n} $ converge to some $z_{0}$. By considering a subsequence, we may also
assume that $Z_{n}$ converges weakly to $Z^{\prime }$. By Lemma \ref{weak-cv}%
, $N_{2}(Z^{\prime })\leq \ell _{0}$ and $d(z_{0},Z^{\prime })\geq \delta
_{0}.$ The rest of the proof follows the proof of \cite[Lemma 5]{Beurling}.
However, we provide the rest of the proof for convenience. By Lemma \ref%
{lem-sup}, there exists a function $f \in \mathcal{F}_{2,\nu }^{\alpha
}\left( \mathbb{C}/\mathbb{Z}\right) $ which vanishes on $Z^{\prime }$, $%
|f(z_0)|e^{-\frac\alpha2|z_0|^2} = A> 0$ and $\Vert F\Vert _{\mathcal{F}%
_{2,\nu }^{\alpha }\left( \mathbb{C}/\mathbb{Z}\right) }\leq 1$.

Set $\varepsilon_n = \|f|Z_n\|_{2,\alpha}$. This sequence converges to $0$.
Since $Z_n$ is interpolating sequence, we can find a functions $g_n \in 
\mathcal{F}_{2,\nu }^{\alpha }\left( \mathbb{C}/\mathbb{Z}\right) $ such
that $g_n = f $ on $Z_n $ and $\|g_n\|_{\mathcal{F}_{2,\nu }^{\alpha }\left( 
\mathbb{C}/\mathbb{Z}\right)} \leq \ell_0 \varepsilon_n$. Consider the
function 
\begin{equation*}
f_n(z) = \frac{f(z) - g_n(z)}{\|f\|_{\mathcal{F}_{2,\nu }^{\alpha }\left( 
\mathbb{C}/\mathbb{Z}\right)} + \ell_0 \varepsilon_n}.
\end{equation*}
We have $\|f_n\|_{2,\alpha} \leq 1$ and $f_n $ vanishes on $Z_n$. Since $%
|g_n(z_n)|e^{-\frac\alpha2|z_n|^2} \leq \|g_n\|_{\mathcal{F}_{2,\nu
}^{\alpha }\left( \mathbb{C}/\mathbb{Z}\right)} \leq \ell_0 \varepsilon_n
\to 0$ we get $\rho_2(z_n, Z_n) \geq |f_n(z_n)|e^{-\frac\alpha2|z_n|^2} \to 
\frac{A}{\|f\|_{p,\alpha}} > 0.$
\end{proof}

Finally we recall the following lemma, proved in \cite[Lemma 8.4]{seip1}.

\begin{lemma}
\label{lowerInt}For given $\ell _{0}$ and $\alpha $, there exists a positive
constant $C=C\left( \ell _{0},\alpha \right) $ so that if $N_{2 }(Z,\alpha
)\leqq \ell _{0}$, then for every square $Q$ with $\left\vert Q\right\vert
\geq 1$, 
\begin{equation*}
\iint_{Q}\log \varrho _{2 }(z,Z)dxdy\geq -C|Q|^{2}\text{.}
\end{equation*}
\end{lemma}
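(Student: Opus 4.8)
The plan is to follow the proof of \cite[Lemma 8.4]{seip1}, the point being that the failure of translation invariance has already been handled in the preceding lemma and enters here only through two quantitative facts, which I record first. \emph{First}, an interpolating sequence with $N_2(Z,\alpha)\le\ell_0$ is separated with separation constant $\delta_\ast=\delta_\ast(\ell_0,\alpha)>0$: given $z_j,z_k\in Z$, interpolate the data $a_m=\delta_{mk}e^{\alpha|z_k|^2/2}$ (which satisfies $(a_me^{-\alpha|z_m|^2/2})_m\in\ell^2$) to obtain $h_k\in\mathcal{F}_{2,\nu}^\alpha(\mathbb{C}/\mathbb{Z})$ with $\|h_k\|\le\ell_0$, $|h_k(z_k)|e^{-\alpha|z_k|^2/2}=1$ and $h_k(z_j)=0$; after moving $z_k$ into $[0,1)\times[0,\tfrac{\pi}{\alpha})$ by a Weyl operator \eqref{weyl} (an isometry of $\mathcal{F}_{2,\nu}^\alpha(\mathbb{C}/\mathbb{Z})$ preserving \eqref{funct-equa}), a Cauchy estimate on a fixed disc together with the pointwise bound $|F(z)|e^{-\alpha|z|^2/2}\le C(\alpha)\|F\|$ (a standard consequence of the reproducing-kernel identity of Section~2.1) give $\sup_{[z_j,z_k]}|h_k'|\le C(\alpha)\ell_0\,e^{\alpha|z_k|^2/2}$, so that $e^{\alpha|z_k|^2/2}=|h_k(z_k)-h_k(z_j)|\le|z_j-z_k|\sup_{[z_j,z_k]}|h_k'|$ forces $|z_j-z_k|\ge(C(\alpha)\ell_0)^{-1}=:\delta_\ast$. \emph{Second}, by the preceding lemma, fixing once and for all $\delta_0:=\delta_\ast/4$ there is $C_0=C_0(\ell_0,\alpha)>0$ with $\varrho_2(z_0,Z)\ge C_0$ whenever $d(z_0,Z)\ge\delta_0$.

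Now let $Q$ be a square with $|Q|\ge1$ and $c_Q$ its centre. Since $Z$ is $\delta_\ast$-separated there is a point $z_0$ with $|z_0-c_Q|\le C_1(\ell_0,\alpha)$ and $d(z_0,Z)\ge\delta_0$, and by the second fact there is $u\in\mathcal{F}_{2,\nu}^\alpha(\mathbb{C}/\mathbb{Z})$ with $u|_Z=0$, $\|u\|\le1$ and $|u(z_0)|e^{-\alpha|z_0|^2/2}\ge C_0$. In particular $u\not\equiv0$, so $\log|u|$ is subharmonic on $\mathbb{C}$; from the definition of $\varrho_2$ we have $\varrho_2(z,Z)\ge|u(z)|e^{-\alpha|z|^2/2}$ for every $z$, while $\varrho_2(z,Z)\le\sqrt{K_{\mathcal{F}_{2,\nu}^\alpha(\mathbb{C}/\mathbb{Z})}(z,z)}\,e^{-\alpha|z|^2/2}\le C_3(\alpha)$ by the reproducing-kernel identity of Section~2.1. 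Put $R:=\tfrac12\operatorname{diam}(Q)+C_1(\ell_0,\alpha)+1$, so that $Q\subset D(z_0,R)$ and, since $\operatorname{diam}(Q)^2=2|Q|\ge2$, $R\le C_2(\ell_0,\alpha)\sqrt{|Q|}$.

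The main computation is then
\begin{equation*}
\iint_Q\log\varrho_2(z,Z)\,dx\,dy\ \ge\ \iint_{D(z_0,R)}\!\big(\log|u(z)|-\tfrac{\alpha}{2}|z|^2\big)\,dx\,dy\ -\ \pi R^2\log^+C_3(\alpha),
\end{equation*}
where the error term comes from the upper bound $\log|u(z)|-\tfrac{\alpha}{2}|z|^2=\log\big(|u(z)|e^{-\alpha|z|^2/2}\big)\le\log C_3(\alpha)$ valid on $D(z_0,R)\setminus Q$; and, by the sub-mean-value inequality for $\log|u|$ at $z_0$ together with $\iint_{D(z_0,R)}|z|^2\,dx\,dy=\pi R^2\big(|z_0|^2+R^2/2\big)$,
\begin{equation*}
\iint_{D(z_0,R)}\!\big(\log|u(z)|-\tfrac{\alpha}{2}|z|^2\big)\,dx\,dy\ \ge\ \pi R^2\log|u(z_0)|-\tfrac{\alpha}{2}\pi R^2|z_0|^2-\tfrac{\alpha\pi}{4}R^4\ \ge\ \pi R^2\log C_0-\tfrac{\alpha\pi}{4}R^4 ,
\end{equation*}
the essential point being that the $|z_0|^2$ contributions cancel, so the estimate does not depend on how far $Q$ lies from the origin. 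Combining the two displays with $R\le C_2\sqrt{|Q|}$ yields $\iint_Q\log\varrho_2(z,Z)\,dx\,dy\ge-C(\ell_0,\alpha)|Q|^2$.

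I expect the only genuinely new point to be the first quantitative fact: in the planar Fock space one simply multiplies a single-point interpolant by the factor $z-z_k$ (at the cost of an arbitrarily small enlargement of $\alpha$), but functions in $\mathcal{F}_{2,\nu}^\alpha(\mathbb{C}/\mathbb{Z})$ do not decay in the vertical direction — their weighted moduli are merely bounded — so that device is unavailable here, and one must reduce the derivative estimate to a fixed bounded region by a Weyl translation instead. With the two facts in hand the rest is the verbatim analogue of \cite[Lemma 8.4]{seip1}.
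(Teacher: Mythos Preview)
The paper gives no proof of this lemma at all; it simply records the statement and cites \cite[Lemma 8.4]{seip1}. Your write-up is a correct expansion of that citation in the present setting: the two preparatory facts you isolate are exactly the inputs Seip's argument needs, the sub-mean-value computation with the cancellation of the $|z_0|^{2}$ terms is the heart of the matter, and your observation that the uniform upper bound $\varrho_2(z,Z)\le C_3(\alpha)$ follows from the diagonal of the reproducing kernel (which here is bounded after the Fock weight, unlike the planar case where it is constant) is the only place the cylinder geometry enters the body of the proof.
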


\begin{theorem}
Suppose that $Z$ is an interpolating set for $\mathcal{F}_{2,\nu }^{\alpha
}\left( \mathbb{C}/\mathbb{Z}\right) $. Then $Z$ is separated and $D^{+}(Z)<%
\frac{\alpha }{\pi }.$
\end{theorem}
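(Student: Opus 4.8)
The statement to prove is the necessary part of the interpolation theorem: if $Z$ is interpolating for $\mathcal{F}_{2,\nu}^{\alpha}(\mathbb{C}/\mathbb{Z})$, then it is separated and $D^{+}(Z)<\frac{\alpha}{\pi}$. Separation is immediate from the lemma quoted just above. The substance is the strict upper bound on $D^{+}$, and the plan is to follow Beurling's scheme (as adapted to the Fock space in \cite[Section 8]{seip1}): combine a \emph{comparison principle} for the quantity $\varrho_{2}(z,Z)$ (adding a point can only decrease the available mass, by the lemma $N_{2}(Z\cup\{z_{0}\})\leq \frac{1+2N_{2}(Z)}{\varrho_{2}(z_{0},Z)}$) with the \emph{logarithmic integral estimate} of Lemma \ref{lowerInt}. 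The key geometric input that replaces the disc $D_{r}(w)$ is the strip-slab $I_{w,r}=[0,1)\times[\mathrm{Im}(w)-\tfrac{r}{2},\mathrm{Im}(w)+\tfrac{r}{2}]$, whose "area" grows like $r$, so the relevant comparison is between $n(Z,I_{w,r})$ and $\frac{\alpha}{\pi}r$.

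\textbf{Key steps, in order.} First, suppose for contradiction that $D^{+}(Z)\geq \frac{\alpha}{\pi}$; after passing to translates $Z+iy$ with $y\in\frac{\pi}{\alpha}\mathbb{Z}$ (which preserve $N_{2}$ by the Weyl-operator invariance noted before Lemma \ref{samp-weak-uniqueness}) and taking a weak limit (using Lemma \ref{weak-cv} to keep $N_{2}$ controlled), one reduces to a limit sequence $Z^{\ast}$ that is still interpolating, still separated, with $N_{2}(Z^{\ast})\leq \ell_{0}:=N_{2}(Z)$, and such that for a sequence $r_{n}\to\infty$ one has $n(Z^{\ast},I_{0,r_{n}})\geq \frac{\alpha}{\pi}r_{n}-o(r_{n})$ — i.e. the slab $I_{0,r_{n}}$ contains "too many" points of $Z^{\ast}$. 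Second, build a comparison set: by deleting points of $Z^{\ast}$ inside the slab so as to leave exactly $\lceil \frac{\alpha}{\pi}r_{n}\rceil$ of them (or comparing against the canonical uniformly-close set $\Lambda_{\alpha}=i\frac{\pi}{\alpha}\mathbb{Z}$ truncated to the slab), and use the point-addition lemma repeatedly to bound $N_{2}$ of the enlarged/modified sequence in terms of a product of the $\varrho_{2}(z_{k},\cdot)$'s. Third — this is the crux — take logarithms: $\log N_{2}$ of the comparison configuration is bounded below by $-\sum_{k}\log\frac{1}{\varrho_{2}(z_{k},\cdot)}$ over the added points, and Lemma \ref{lowerInt} bounds $\iint_{Q}\log\varrho_{2}\,dxdy\geq -C|Q|^{2}$ for squares. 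Tiling the slab $I_{0,r_{n}}$ by $\asymp r_{n}$ unit squares gives $\sum\log\frac{1}{\varrho_{2}}\lesssim r_{n}^{2}$, hence a lower bound on $N_{2}$ of the order $e^{-Cr_{n}^{2}}$ raised against the "excess count." Balancing the number of points ($\asymp \frac{\alpha}{\pi}r_{n}$, essentially the dimension of the restricted space, since $\dim$ of functions supported in $I_{0,r}$ is $\asymp \frac{\alpha}{\pi}r$) against the available $L^{2}$-mass in the slab (which by the basis $\varphi_{k}$ and the kernel estimate is $\asymp \frac{\alpha}{\pi}r$ as well) forces a contradiction when the density is $\geq \frac{\alpha}{\pi}$: there is simply no room to interpolate with a bounded constant once the point count in the slab matches or exceeds the dimension count, because then a non-trivial element of the finite-dimensional restriction must vanish on $Z^{\ast}\cap I_{0,r_{n}}$ while having unit norm localized there, driving $\varrho_{2}$ — and with it the interpolation constant — out of any fixed bound.

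\textbf{Main obstacle.} The delicate point is the same one Seip faced and that the authors flag in the introduction: in \cite{seip1,SeipDisc} the argument leans on the orthogonality, on concentric discs, of the eigenfunctions of a localization operator, and the authors state they could not find an analogue in the cylinder. So the genuine work is to replace that concentration/orthogonality mechanism by a direct estimate tailored to the strip — presumably exploiting that the basis functions $\varphi_{k}(z)=e^{\frac{\alpha}{2}z^{2}+2i\pi(k+\nu)z}$ have $|\varphi_{k}(z)|^{2}e^{-\alpha|z|^{2}}$ sharply concentrated around the horizontal line $\mathrm{Im}(z)=-\frac{\pi}{\alpha}(k+\nu)$, so that "localization to the slab $I_{0,r}$" is essentially "truncating the index $k$ to a window of length $\asymp\frac{\alpha}{\pi}r$." Making the counting $n(Z,I_{0,r})$ versus $\frac{\alpha}{\pi}r$ line up with a genuine linear-algebra obstruction — and controlling the boundary/overlap errors from the Gaussian tails so that the $o(r)$ terms really are negligible against the main terms — is where the care is needed; the logarithmic-integral step itself (Lemma \ref{lowerInt}) and the point-addition comparison are, by contrast, expected to transfer with only cosmetic changes from the planar case.
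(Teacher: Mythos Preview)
Your plan has a genuine gap, and it also misses the paper's key trick.

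\textbf{The gap.} Your ``linear-algebra obstruction'' step is not a proof. Saying that ``once the point count in the slab matches or exceeds the dimension count there is no room to interpolate with a bounded constant'' is a heuristic, not an argument: dimension counting of this kind, even when it can be made precise, yields at best $D^{+}(Z)\leq \frac{\alpha}{\pi}$, not the strict inequality. The strictness in Seip's planar proof comes from the $-\frac{2}{\pi}\frac{\log r}{r^{2}}$ term produced by Jensen's formula, and nothing in your sketch generates such a term. Your use of Lemma~\ref{lowerInt} is also garbled: tiling the slab $I_{0,r_{n}}$ by $\asymp r_{n}$ unit squares and applying the lemma gives $\iint_{I_{0,r_{n}}}\log\varrho_{2}\geq -Cr_{n}$, not $-Cr_{n}^{2}$, and in any case you never connect this integral to the point count $n(Z^{\ast},I_{0,r_{n}})$. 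The weak-limit and point-addition machinery you invoke is correct and does transfer from the planar case, but it is scaffolding; the analytic core (Jensen) is missing.

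\textbf{What the paper does instead.} The paper does not attempt to build a strip-tailored substitute for the localization-operator orthogonality you flag as the main obstacle. It sidesteps the obstacle entirely by \emph{exporting the problem back to the plane}. The key observation is the norm identity (\ref{normequality}): any $F\in \mathcal{F}_{2,\nu}^{\alpha}(\mathbb{C}/\mathbb{Z})\subset \mathcal{F}_{\infty,\nu}^{\alpha}(\mathbb{C}/\mathbb{Z})$ is an entire function on all of $\mathbb{C}$ with $\|F\|_{\mathcal{F}_{\infty,\nu}^{\alpha}(\mathbb{C}/\mathbb{Z})}=\|F\|_{\mathcal{F}_{\infty}^{\alpha}(\mathbb{C})}$, so $F$ lies in the \emph{classical} planar space $\mathcal{F}_{\infty}^{\alpha}(\mathbb{C})$; and if $F$ vanishes on $Z$, quasi-periodicity forces it to vanish on the periodized set $Z'=Z+\mathbb{Z}$. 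One then encloses the strip rectangle $\mathcal{R}=[0,1)\times[\mathrm{Im}(w)-\tfrac{R}{2},\mathrm{Im}(w)+\tfrac{R}{2}]$ in a square $\mathcal{Q}$ of side $R$, tiles $\mathcal{Q}$ into $\lfloor R\rfloor^{2}$ unit squares $Q_{j}$, uses Lemma~\ref{lowerInt} on each $Q_{j}$ to control $\log\varrho_{2}$, and applies Jensen's formula on discs $|z|<r$ in the full plane to the translated functions, exactly as in \cite[p.~100]{seip1}. This gives
\[
\frac{n(\mathcal{R}^{-})}{R-2r}\leq \left(\frac{\alpha}{\pi}-\frac{2}{\pi}\frac{\log r}{r^{2}}+\frac{C(\ell,\alpha)}{r^{2}}\right)\left(1-\frac{2r}{R}\right)^{-2},
\]
and choosing $r$ large makes the bracket strictly less than $\frac{\alpha}{\pi}$; letting $R\to\infty$ yields $D^{+}(Z)<\frac{\alpha}{\pi}$. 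The strict inequality is produced by Jensen's formula, not by any strip-specific mechanism.
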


\begin{proof}
Consider an arbitrary rectangle of the form $\mathcal{R}=[0,1)\times \lbrack 
\mathrm{Im}(w)-\frac{R}{2},\mathrm{Im}(w)+\frac{R}{2}]$ and associate with
the rectangle $\mathcal{R}$ the square $\mathcal{Q}$ of side length $R$,
which contains $\mathcal{R}$ in the middle. To adapt Seip's idea in \cite[p.
100]{seip1}, divide $\mathcal{Q}$ into $\lfloor R\rfloor \times \lfloor
R\rfloor $ squares, denoted by $Q_{j}$. Since $Z$ is separated, we can find
a point $z_{j}$ with $d\left( z_{j},Z\right) \geq \delta _{0}=\delta
_{0}\left( N_{2}(Z),\alpha \right) $. Let $Z_{j}=Z\cup \left\{ z_{j}\right\} 
$. By Lemma \ref{lowerInt}, 
\begin{equation*}
\iint_{Q_{j}}\log \varrho _{2}\left( z,Z_{j}\right) dxdy\geq -C(l,\alpha )%
\text{.}
\end{equation*}%
Thus, for $z\in Q_{j}$ and $z\not\in Z_j$, we can find $F\in \mathcal{F}%
_{2,\nu }^{\alpha }\left( \mathbb{C}/\mathbb{Z}\right) $ vanishing on $Z_{j}$%
, with $\Vert F\Vert _{\mathcal{F}_{2,\nu }^{\alpha }\left( \mathbb{C}/%
\mathbb{Z}\right) }\leqq 1$ and $F(z)=\varrho _{\infty }\left( z,Z\right) $.

Now, noticing that for $F\in \mathcal{F}_{2,\nu }^{\alpha }\left( \mathbb{C}/%
\mathbb{Z}\right) \subset \mathcal{F}_{\infty ,\nu }^{\alpha }\left( \mathbb{%
C}/\mathbb{Z}\right) $, we have from (\ref{normequality}), $\Vert F\Vert _{%
\mathcal{F}_{\infty ,\nu }^{\alpha }\left( \mathbb{C}/\mathbb{Z}\right)
}=\Vert F\Vert _{\mathcal{F}_{\infty }^{\alpha }\left( \mathbb{C}\right) }$
and since $F$ is entire, we conclude that $F\in \mathcal{F}_{\infty
}^{\alpha }\left( \mathbb{C}\right) $, where $\mathcal{F}_{\infty }^{\alpha
}\left( \mathbb{C}\right) $ is the classical Fock space with norm $\Vert
F\Vert _{\infty }=\sup_{z\in \mathbb{C}}|F(z)|e^{-\frac{\alpha }{2}|z|^{2}}$%
. Set $Z^{\prime }=Z+\mathbb{Z}$. Since $\mathcal{F}_{\infty }^{\alpha
}\left( \mathbb{C}\right) $ is invariant under translations, this gives a
function $F\in \mathcal{F}_{\infty }^{\alpha }\left( \mathbb{C}\right) $,
such that $F$ vanishes on $Z^{\prime }-z$, with $\Vert F\Vert _{\mathcal{F}%
_{\infty }^{\alpha }\left( \mathbb{C}\right) }\leq 1$ and $F(0)=\varrho
_{2}\left( z,Z\right) $.

Since $F\in \mathcal{F}_{\infty }^{\alpha }\left( \mathbb{C}\right) $ is an
entire function, we can now apply Jensen's formula on $\left\vert
z\right\vert <r$\ and repeat Seip's argument \cite[p. 100]{seip1}. Denoting
by $n\left( Q^{-}\right) $ the number of points from $Z$\ contained in $%
Q^{-} $, we find 
\begin{equation*}
\frac{n\left( Q^{-}\right) }{(R-2r)^{2}}\leq \left( \frac{\alpha }{\pi }-%
\frac{2}{\pi }\frac{\log r}{r^{2}}+\frac{C(l,\alpha )}{r^{2}}\right) \left(
1-\frac{2r}{R}\right) ^{-2}\text{.}
\end{equation*}%
As in \cite{seip1}, $Q^{-}$ is the square of side length $R-2r$, consisting
of those points whose distance the complement of $\mathcal{Q}$ exceeds $r$.
Denote by $R^{-}$ be the rectangle of length $R-2r$ contained in $\mathcal{R}
$. since every point is computed at last $R-2r$ times. We obtain 
\begin{equation*}
\frac{n\left( R^{-}\right) }{R-2r}\leq \left( \frac{\alpha }{\pi }-\frac{2}{%
\pi }\frac{\log r}{r^{2}}+\frac{C(l,\alpha )}{r^{2}}\right) \left( 1-\frac{2r%
}{R}\right) ^{-2}\text{.}
\end{equation*}%
Observing the possibility of choosing $r$ so large that 
\begin{equation*}
\frac{\alpha }{\pi }-\frac{2}{\pi }\frac{\log r}{r^{2}}+\frac{C(l,\alpha )}{%
r^{2}}<\frac{\alpha }{\pi }\text{,}
\end{equation*}%
we can let $R\rightarrow \infty $, and conclude that $D^{+}(Z)<\alpha /\pi $.
\end{proof}

\end{document}